\newcolumntype{N}{c@{}S}
\pgfplotsset{compat=1.15}
\def\centerarc[#1](#2)(#3:#4:#5)% Syntax: [draw options] (center) (initial angle:final angle:radius)
\newcommand{\rotateRPY}[4][0/0/0]% point to be saved to \savedxyz, roll, pitch, yaw
{   \pgfmathsetmacro{\rollangle}{#2}
	\pgfmathsetmacro{\pitchangle}{#3}
	\pgfmathsetmacro{\yawangle}{#4}
	
	% to what vector is the x unit vector transformed, and which 2D vector is this?
	\pgfmathsetmacro{\newxx}{cos(\yawangle)*cos(\pitchangle)}% a
	\pgfmathsetmacro{\newxy}{sin(\yawangle)*cos(\pitchangle)}% d
	\pgfmathsetmacro{\newxz}{-sin(\pitchangle)}% g
	\path (\newxx,\newxy,\newxz);
	\pgfgetlastxy{\nxx}{\nxy};
	
	% to what vector is the y unit vector transformed, and which 2D vector is this?
	\pgfmathsetmacro{\newyx}{cos(\yawangle)*sin(\pitchangle)*sin(\rollangle)-sin(\yawangle)*cos(\rollangle)}% b
	\pgfmathsetmacro{\newyy}{sin(\yawangle)*sin(\pitchangle)*sin(\rollangle)+ cos(\yawangle)*cos(\rollangle)}% e
	\pgfmathsetmacro{\newyz}{cos(\pitchangle)*sin(\rollangle)}% h
	\path (\newyx,\newyy,\newyz);
	\pgfgetlastxy{\nyx}{\nyy};
	
	% to what vector is the z unit vector transformed, and which 2D vector is this?
	\pgfmathsetmacro{\newzx}{cos(\yawangle)*sin(\pitchangle)*cos(\rollangle)+ sin(\yawangle)*sin(\rollangle)}
	\pgfmathsetmacro{\newzy}{sin(\yawangle)*sin(\pitchangle)*cos(\rollangle)-cos(\yawangle)*sin(\rollangle)}
	\pgfmathsetmacro{\newzz}{cos(\pitchangle)*cos(\rollangle)}
	\path (\newzx,\newzy,\newzz);
	\pgfgetlastxy{\nzx}{\nzy};
	
	% transform the point given by #1
	\foreach \x/\y/\z in {#1}
	{   \pgfmathsetmacro{\transformedx}{\x*\newxx+\y*\newyx+\z*\newzx}
		\pgfmathsetmacro{\transformedy}{\x*\newxy+\y*\newyy+\z*\newzy}
		\pgfmathsetmacro{\transformedz}{\x*\newxz+\y*\newyz+\z*\newzz}

	}
}
\tikzset{RPY/.style={x={(\nxx,\nxy)},y={(\nyx,\nyy)},z={(\nzx,\nzy)}}}
\newtheorem{theorem}{Theorem}[section]
\newtheorem{lemma}[theorem]{Lemma}
\theoremstyle{definition}
\theoremstyle{remark}
\newtheorem{remark}[theorem]{Remark}
\def\d{\partial}
\newcommand{\re}[1]{\hat{#1}}
\newcommand{\de}[1]{{#1}}
\newcommand{\T}{\ensuremath{\re{\mathscr{T}}}} % Triangulation
\newcommand{\E}{\ensuremath{\re{\mathscr{E}}}} % Skeleton
\newcommand{\dT}{\ensuremath{\de{\mathscr{T}}}} % Triangulation
\newcommand{\spaceU}{\ensuremath{\mathcal{U}}}
\newcommand{\spaceB}{\ensuremath{\Sigma}}
\newcommand{\spaceS}{\ensuremath{\mathcal{N}}}
\newcommand{\spaceH}{\ensuremath{\Lambda}}
\renewcommand{\L}{\ensuremath{\mathscr{L}}} % Lagrange functional
\newcommand{\mat}[1]{\ensuremath{\begin{pmatrix}#1\end{pmatrix}}}	% matrix
\newcommand{\R}{\ensuremath{\mathbb{R}}}
\newcommand{\tcross}{\boldsymbol{\times}}
\newcommand{\dx}{\,\mathrm{dx}}
\newcommand{\ds}{\,\mathrm{ds}}
\newcommand{\refT}{\tilde{T}}
\newcommand{\refE}{\tilde{E}}
\newcommand{\refPhi}{\tilde{\Phi}}
\newcommand{\veps}{\varepsilon}
\newcommand{\shearModulus}{G}
\newcommand{\shearCorrection}{\kappa}
\newcommand{\bending}{\bm{\sigma}}
\renewcommand{\angle}{\alpha}
\newcommand{\curv}{\bm{\kappa}}
\newcommand{\Av}[1]{\{#1\}}
\newcommand{\jump}[2][]{\ensuremath{\ifthenelse{\equal{#1}{}}{\llbracket #2 \rrbracket}{\llbracket #2 \rrbracket}_{#1}}}
\newcommand{\jmp}[1]{\ensuremath{\llbracket #1 \rrbracket}}
\newcommand{\mean}[1]{\ensuremath{\{\!\{#1\}\!\}}}
\newcommand{\tr}[1]{\ensuremath{\,\mathrm{tr}(#1)}}
\newcommand{\sym}[1]{\ensuremath{\,\mathrm{sym}(#1)}}
\renewcommand{\skew}[1]{\ensuremath{\,\mathrm{skew}(#1)}}
\newcommand{\cof}[1]{\ensuremath{\,\mathrm{cof}(#1)}}
\newcommand{\norm}[2][]{\ensuremath{\ifthenelse{\equal{#1}{}}{\left\|#2\right\|}{\left\|#2\right\|_{#1}}}}
\renewcommand{\forall}{\text{ for all }}
\newcommand{\rSurf}{\re{\mathcal{S}}}
\newcommand{\rnv}{\re{\nu}}
\newcommand{\rtv}{\re{\tau}}
\newcommand{\rcnv}{\re{\mu}}
\newcommand{\rx}{\re{x}}
\newcommand{\rshear}{\re{\gamma}}
\newcommand{\Hessian}{\mathcal{H}}
\newcommand{\rgrad}{\nabla_{\rSurf}}
\newcommand{\rcovder}{\nabla_{\rSurf}^{\mathrm{cov}}}
\newcommand{\rhesse}{\nabla_{\rSurf}^2}
\newcommand{\cphi}{\circ\phi}
\newcommand{\dSurf}{\de{\mathcal{S}}}
\newcommand{\dnv}[1][]{\ifthenelse{\equal{#1}{}}{\de{\nu}\cphi}{\de{\nu}_{#1}\cphi}}
\newcommand{\dtv}[1][]{\ifthenelse{\equal{#1}{}}{\de{\tau}\cphi}{\de{\tau}_{#1}\cphi}}
\newcommand{\dcnv}[1][]{\ifthenelse{\equal{#1}{}}{\de{\mu}\cphi}{\de{\mu}_{#1}\cphi}}
\newcommand{\dshear}[1][]{\ifthenelse{\equal{#1}{}}{\de{\gamma}\cphi}{\de{\gamma}_{#1}\cphi}}
\newcommand{\director}[1][]{\ifthenelse{\equal{#1}{}}{\tilde{\nu}\cphi}{\tilde{\nu}_{#1}\cphi}}
\newcommand{\MT}{\mathbb{M}}
\newcommand{\DG}{\bm{F}}
\newcommand{\DGS}{{\bm{F}_{\rSurf}}}
\newcommand{\CGTS}{{\bm{C}_{\rSurf}}}
\newcommand{\GTS}{{\bm{E}_{\rSurf}}}
\newcommand{\PoissonRatio}{{\bar{\nu}}}
\newcommand{\YoungsModulus}{{\bar{E}}}
\newcommand{\bA}{\bm{A}}
\newcommand{\bB}{\bm{B}}
\newcommand{\bR}{\bm{R}}
\newcommand{\bI}{\bm{I}}
\newcommand{\bH}{\bm{H}}
\newcommand{\Proj}{{\bm{P}_{\rSurf}}}
\newcommand{\Pol}{\ensuremath{\mathcal{P}}}
\newcommand{\RegInt}[1][]{\mathcal{I}_{h}^{\Regge\ifthenelse{\equal{#1}{}}{}{,#1}}}
\newcommand{\HoneInt}[1][]{\mathcal{I}_{h}^{\VV,\ifthenelse{\equal{#1}{}}{}{#1}}}
\newcommand{\Regge}[1][]{\ensuremath{\mathcal{R}\ifthenelse{\equal{#1}{}}{}{(#1)}}}
\newcommand{\SO}[1]{\mathrm{SO}(#1)}
\newcommand{\Stwo}{\mathbb{S}^2}
\renewcommand{\div}{\mathrm{div}}
\newcommand{\Ltwo}[1][]{\ensuremath{L^2\ifthenelse{\equal{#1}{}}{}{(#1)}}}
\newcommand{\Winf}[1][]{\ensuremath{W^{1,\infty}\ifthenelse{\equal{#1}{}}{}{(#1)}}}
\newcommand{\Hone}[1][]{\ensuremath{H^1\ifthenelse{\equal{#1}{}}{}{(#1)}}}
\newcommand{\Honeh}[1][]{\ensuremath{H^1_h\ifthenelse{\equal{#1}{}}{}{(#1)}}}
\newcommand{\Honez}[1][]{\ensuremath{H^1_0\ifthenelse{\equal{#1}{}}{}{(#1)}}}
\newcommand{\Hmone}[1][]{\ensuremath{H^{-1}\ifthenelse{\equal{#1}{}}{}{(#1)}}}
\newcommand{\Htwo}[1][]{\ensuremath{H^2\ifthenelse{\equal{#1}{}}{}{(#1)}}}
\newcommand{\Htwoz}[1][]{\ensuremath{H^2_0\ifthenelse{\equal{#1}{}}{}{(#1)}}}
\newcommand{\Hmtwo}[1][]{\ensuremath{H^{-2}\ifthenelse{\equal{#1}{}}{}{(#1)}}}
\newcommand{\HDiv}[1][]{\ensuremath{H(\mathrm{div}\ifthenelse{\equal{#1}{}}{}{,#1})}}
\newcommand{\HDivz}[1][]{\ensuremath{H_0(\mathrm{div}\ifthenelse{\equal{#1}{}}{}{,#1})}}
\newcommand{\HCurl}[1][]{\ensuremath{H(\mathrm{curl}\ifthenelse{\equal{#1}{}}{}{,#1})}}
\newcommand{\HDivDiv}[1][]{\ensuremath{H(\mathrm{divdiv}\ifthenelse{\equal{#1}{}}{}{,#1})}}
\newcommand{\HCurlCurl}[1][]{\ensuremath{H( \mathrm{curl curl}\ifthenelse{\equal{#1}{}}{}{,#1})}}
\newcommand{\Cone}[1][]{\ensuremath{C^1\ifthenelse{\equal{#1}{}}{}{(#1)}}}
\newcommand{\Czero}[1][]{\ensuremath{C^0\ifthenelse{\equal{#1}{}}{}{(#1)}}}
\newcommand{\Ctwo}[1][]{\ensuremath{C^2\ifthenelse{\equal{#1}{}}{}{(#1)}}}
\newcommand{\Cinf}[1][]{\ensuremath{C^{\infty}\ifthenelse{\equal{#1}{}}{}{(#1)}}}
\newcommand{\DF}[1][]{\ensuremath{C_0^{\infty,*}\ifthenelse{\equal{#1}{}}{}{(#1)}}}
\newcommand{\idop}{\mathrm{id}}
\title[HHJ \& TDNNS for shells]{The Hellan--Herrmann--Johnson and TDNNS method for linear and nonlinear shells}
\author[M.~Neunteufel]{Michael~Neunteufel}
\address{Institute for Analysis and Scientific Computing, TU Wien, Wiedner Hauptstr. 8-10, 1040 Wien, Austria}
\email{michael.neunteufel@asc.tuwien.ac.at}
\author[J.~Sch\"oberl]{Joachim~Sch\"oberl}
\address{Institute for Analysis and Scientific Computing, TU Wien, Wiedner Hauptstr. 8-10, 1040 Wien, Austria}
\email{joachim.schoeberl@tuwien.ac.at}
\begin{document}

\begin{abstract}
	In this paper we extend the recently introduced mixed Hellan--Herrmann--Johnson (HHJ) method for nonlinear Koiter shells to nonlinear Naghdi shells by means of a hierarchical approach. The additional shearing degrees of freedom are discretized by H(curl)-conforming N\'ed\'elec finite elements entailing a shear locking free method. By linearizing the models we obtain in the small strain regime linear Kirchhoff--Love and Reissner--Mindlin shell formulations, which reduce for plates to the originally proposed HHJ and TDNNS method for Kirchhoff--Love and Reissner--Mindlin plates, respectively. By using the Regge interpolation operator we obtain locking-free arbitrary order shell methods. Additionally, the methods can be directly applied to structures with kinks and branched shells. Several numerical examples and experiments are performed validating the excellence performance of the proposed shell elements.
	\\
	\vspace*{0.25cm}
	\\
	{\bf{Keywords: shells, mixed finite elements, Hellan--Herrmann--Johnson, TDNNS, locking}}  \\
	
	\noindent
	\textbf{\textit{MSC2020:} }74S05, 74K25, 74K30
\end{abstract}

\maketitle

\section{Introduction}
\label{sec:intro}

This paper is concerned with the extension of the recently proposed Hellan--Herrmann--Johnson (HHJ) method for nonlinear Koiter to Naghdi shells and the linearization in the context of Kirchhoff--Love and Reissner--Mindlin plates and shells.

For thin-walled structures the Kirchhoff--Love hypothesis is frequently assumed to eliminate the shearing degrees of freedom leading to a fourth order problem. Due to the difficulty of the construction of $C^1$-conforming Kirchhoff--Love plate and shell elements, several approaches have been considered. An (incomplete) list includes discrete Kirchhoff \cite{BZH01}, non-conforming \cite{Mor71,KB96}, and rotation-free elements \cite{OZ00}, as well as discontinuous Galerkin \cite{EGHL02,HLM2017}, Isogeometric Analysis \cite{HCB05,SF19a}, $C^1$-continuous Trace-Finite-Cell \cite{Gfr2021}, and mixed methods \cite{RZ19,NS19}. For the latter the moment stress tensor is used as additional unknown reducing the fourth order to a second order mixed saddle-point problem. The HHJ method \cite{Hel67,Her67,Joh73, Com89}, allowing for linear and high-order formulations, has been generalized from linear Kirchhoff--Love plates to nonlinear Koiter shells in \cite{NS19}. In this work we present an improved linearization analysis deriving the HHJ method for linear Kirchhoff--Love shells. The involved fields enable the handling of structures with kinks and branched shells without additional treatment as e.g. enforcing $C^1$-continuity by penalties \cite{VPS2017} or conditions for angle preservation over patches \cite{KBLW09}. For a continuous but non-smooth surface its normal vector is not globally continuous. The curvature computation involving the first derivatives of the normal vector has then to be performed in distributional sense. Motivated by discrete differential geometry it consists of an element-wise Weingarten tensor and the dihedral angle at element edges. By introducing the difference of curvatures of the initial and deformed configuration as additional unknown, a lifting of the distributional curvatures to a regular field is performed. This leads to an equivalent three-field Hu--Washizu formulation of the HHJ method. Further, it enables the usage of nonlinear material laws, which would not be applicable within a Hellinger--Reissner two-field formulation, where the inversion of the material law is mandatory.

In the moderate thickness regime the Reissner--Mindlin/Naghdi shells include additional shearing or rotational degrees of freedom (dofs) to better reflect the 3D behavior of the material. The additional dofs circumvent the fourth order problem enabling the usage of e.g. continuous Lagrangian finite elements. If not treated carefully, however, so-called shear locking might occur. In the literature a huge amount of procedures and strategies have been proposed to successfully tackle this problem. We use a hierarchical approach as in \cite{EOB13,OSRB2017}, where this problem is intrinsically avoided. We propose a novel method for nonlinear Naghdi shells by introducing tangential continuous N\'ed\'elec elements as shearing unknowns on the top of the HHJ method. The resulting formulation will be linearized to linear Reissner--Mindlin shells and plate. This reveals that the approach is an extension of the tangential-displacement normal-normal-stress continuous (TDNNS) method for Reissner--Mindlin plates \cite{PS11,PS17}.

To circumvent the problem of membrane locking we consider the procedure proposed in \cite{NS21} by inserting the interpolation operator into so-called Regge finite elements in the membrane energy term. The used finite elements are available for triangles and quadrilaterals such that unstructured triangular meshes can be combined with anisotropic quadrilateral elements in a locking-free manner. We show by means of several well-established (non-)linear benchmark examples the excellent performance of the proposed methods.

This paper is structured as follows. In the next section notations used throughout this work are introduced. A coordinate free description of surface differential operators by means of tangential differential calculus (TDC) is given and the shell equations and function spaces are stated. Section~\ref{sec:HHJ_nonlinear_Koiter_shells} is devoted to the derivation of the HHJ method for nonlinear Koiter shells. A three-field Hu--Washizu formulation is used to incorporate the distributional curvature approximation and then reduced to the two-field formulation as originally proposed in \cite{NS19}. The extension to nonlinear Naghdi shells is presented in Section~\ref{sec:TDNNS_nonlinear_Naghdi_shells}. In Section~\ref{sec:linearization} we linearize the nonlinear formulations in the context of shells and plates. Computational aspects including a stable angle computation and handling of nonlinear material laws are discussed in Section~\ref{sec:computational aspects}. Section~\ref{sec:kinks_branched_shells} is devoted to structures with kinks and branched shells. In Section~\ref{sec:finite_elements} the finite element discretization of the proposed methods are presented and in Section~\ref{sec:numerics} several numerical examples validate the performance of the methods.\\

\section{Notation and preliminaries}
\label{sec:notation}

This section provides definitions that we use throughout. For two vectors $u,v\in\R^3$ their scalar, outer, and cross products, respectively, are denoted by $u\cdot v$, $u\otimes v$, and $u\times v$. For two matrices $\bA,\bB\in\R^{3\times3}$ their Frobenius inner and tensor cross product is given by $\bA:\bB := \tr{\bA^\top \bB}:=\sum_{i,j=1}^3\bA_{ij}\bB_{ij}$ and $(\bA\tcross\bB)_{ij} := \varepsilon_{ikl}\varepsilon_{jmn}\bA_{km}\bB_{ln}$ \cite{BGO16}, where $\varepsilon_{ijk}$ is the Levi--Civita symbol and we made use of Einstein's summation convention of repeated indices.  $\bI\in \R^{3\times 3}$ will denote the identity matrix. The Euclidean and Frobenius norms are given by $\|v\|_2^2:=v\cdot v$, $\|\bA\|_F^2:=\bA:\bA$ and for a fourth order tensor identified as a linear mapping between matrices $\MT:\R^{3\times 3}\to\R^{3\times 3}$ we define the norm $\|\bA\|_{\MT}^2:=(\MT\bA):\bA$. If no misunderstandings are possible we neglect the subscripts of the Euclidean and Frobenius norm.  The angle between vectors $u,v$ is abbreviated by $\sphericalangle(u,v):=\arccos\big(\frac{u\cdot v}{\|u\|\|v\|}\big)$.

\subsection{Surfaces and tangential differential calculus}
\label{subsec:TDC}
Let $\rSurf\subset\R^3$ be an oriented surface, i.e., a two-dimensional manifold embedded in $\R^3$, with its globally continuous unit normal vector $\rnv:\rSurf\to \Stwo:=\{v\in\R^3\,\vert\, \|v\|=1 \}$. At each point $p\in\rSurf$ its tangent space is given by $T_p\rSurf=\{v\in\R^3\,\vert\, v\cdot\rnv(p)=0\}$ and we denote the tangent bundle by $T\rSurf=\bigcup_{p\in\rSurf}\{p\}\times T_p\rSurf$. The orthogonal complement of a vector $v\in \R^3$ is $\{v\}^\perp=\{u\in\R^3\,\vert\, u\cdot v=0\}$ such that $T_p\rSurf=\{\rnv(p)\}^\perp$, and identities like $\{\rnv\}^\perp=T\rSurf$ have to be understood to hold point-wise. For a scalar field $f:\rSurf\to \R$ we define the surface gradient $\rgrad f:\rSurf\to T\rSurf$, interpreted as column vector, in terms of tangential differential calculus (TDC) \cite{DZ11,Sp79,SF19a,SF19b} as follows. In an open neighborhood $\R^3\supset \Omega\supset\rSurf$ we can extend $f$ constantly into the ambient space $F:\Omega\to \R$. By computing the classical gradient in $\R^3$ of $F$ and projecting the result back to the tangent space yields the surface gradient of $f$
\begin{align}
	\rgrad f=\Proj\nabla F,\qquad \Proj=\bI-\rnv\otimes \rnv,
\end{align}
which is independent of the chosen extension $F$ \cite[Lemma 2.4]{DE13}. Here, $\Proj$ denotes the projection onto the tangent space of $\rSurf$. For a vector-valued function $u:\rSurf\to\R^3$ its surface gradient is defined component-wise as matrix, where each row is a gradient,
\begin{align}
	\rgrad u = \mat{\rgrad u_1^\top\\\rgrad u_2^\top\\\rgrad u_3^\top},\qquad \text{ such that }\qquad\rgrad u\rnv=0, \text{ but } \rgrad u^\top\rnv\neq 0.
\end{align}
The covariant derivative projects additionally the columns of $\rgrad u$ to the tangent space
\begin{align}
	\rcovder u = \Proj\rgrad u,\qquad \text{ such that }\qquad\rcovder u\rnv=\rcovder u^\top\rnv= 0.
\end{align}
The surface divergence of a vector-valued function $u:\rSurf\to \R^3$ is defined as $\div_{\rSurf}u:=\tr{\rgrad u}$ \cite{SF19a,Reus20} and the divergence of a matrix-valued function is given row-wise. This enables the integration-by-parts formula for continuously differentiable functions $u\in\Cone[\rSurf,\R^3]$ and $\bA\in\Cone[\rSurf,T\rSurf\otimes T\rSurf]$ 
\begin{align}
	\label{eq:ibp_tangent}
	\int_{\rSurf}\rgrad u:\bA\dx = -\int_{\rSurf} u\cdot\div_{\rSurf}\bA\dx+ \int_{\d\rSurf}u\cdot (\bA\rcnv)\ds,
\end{align}
where $\rcnv$ denotes the co-normal vector at the boundary. We will also make use of the Riemannian Hessian on $\rSurf$, which is defined for a scalar field $f\in \Ctwo[\rSurf]$ as \cite{SF19a,DE13}
\begin{align}
	\label{eq:surface_hesse}
	\rhesse f= \rcovder\rgrad f.
\end{align}

\subsection{Discrete and deformed surfaces}
\label{subsec:discrete_def_surfaces}
On $\rSurf$ we assume a shape regular triangulation $\T$ of (possibly curved) triangles and/or quadrilaterals, where the vertices of $\T$ lie on $\rSurf$. We emphasize that the discrete approximation $\T$ of $\rSurf$ is continuous but not necessarily $\Cone$. For each element $T\in\T$ the normal vector $\rnv$, which orientation is inherited from $\rSurf$, is given. On the element-boundaries, denoted by $\d T$, we define the tangent vector $\rtv_T$ oriented clock-wise such that the co-normal vector $\rcnv_T=\rnv_T\times \rtv_T$ points outward the element, cf. Figure~\ref{fig:nv_tv_cnv}. The set of all edges $E$ is denoted as skeleton $\E$ of $\T$. Further, on each edge we fix a tangential $\rtv_E$ and co-normal vector $\rcnv_E$ determining the orientation of $E$. Integrating over the triangulation $\T$ will be abbreviated by $\int_{\T}f\,\dx:=\sum_{T \in \T}\int_{T}f\,\dx$.

\begin{figure}[h]
	\centering
	\includegraphics[width=0.3\textwidth]{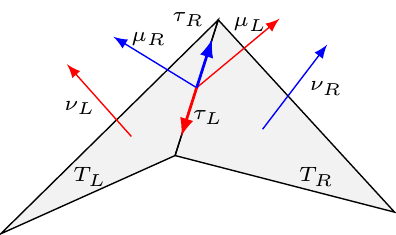}
	\caption{Elements with normal, edge tangential, and co-normal vectors.}
	\label{fig:nv_tv_cnv}
\end{figure}

Assume another surface $\dSurf\subset\R^3$ related to $\rSurf$ by a smooth function $\phi:\rSurf\to\dSurf$, which gradient $\rgrad\phi$ has rank two. Then, the derivative $\DGS=\rgrad\phi$ maps the tangent space $T\rSurf$ to $T\dSurf$ in a bijective manner. We will use the same symbol $\phi$ as mapping between the exact surfaces $\rSurf$, $\dSurf$ and the corresponding triangulations $\T$ and $\dT=\phi(\T)$. On the deformed triangulation $\dT$ we can define analogously element-wise the orthonormal frame $(\dnv,\dtv,\dcnv)$. We will need the transformation rule for the normal vector $\dnv$. Therefore, we use the cofactor matrix defined for invertible matrices by $\cof{\bA} = \det(\bA)\bA^{-\top}$. Note that the cofactor matrix is well-defined also for singular matrices. In three spatial dimensions the tensor cross product \cite{BGO16} is a beneficial tool. It is symmetric, bilinear, and there holds $\cof{\bA}=\frac{1}{2}\bA\tcross\bA$ (further identities are stated in Lemma~\ref{lem:tcp_id}). The variation of the cofactor matrix with respect to $\bA$ in direction $\delta\bA$ is then $D_A\cof{\bA}[\delta\bA] = \bA\tcross(\delta \bA)$. There holds on each element $T\in\T$ with $J:=\|\cof{\DGS}\rnv\|_2$
\begin{align}
	\label{eq:trafo_nv}
	\dnv= J^{-1}\cof{\DGS}\rnv.
\end{align}

\subsection{Shell models}
\label{subsec:shell_models}
We consider in this work the direct method for shells going back to the Cosserat brothers \cite{CC09}, also denoted as geometrically exact shell model \cite{SIMO89}. Following the approach of an inextensible one-director Cosserat surface the reference (initial) configuration of a shell $\re{\Omega}\subset \R^3$ with homogeneous thickness $t$ is given by
\begin{align}
	\re{\Omega} =\{ \rx + z\rnv(\rx)\,\vert\, \rx\in\rSurf,\, z\in [-t/2,t/2]\},
\end{align}
where $\rSurf$ denotes the mid-surface with its oriented unit normal vector $\rnv$. The deformed configuration of the shell $\de{\Omega}=\Phi(\re{
	\Omega})$ is defined by $\Phi:\re{\Omega}\to\R^3$ being of the form
\begin{align}
	\Phi(\rx,z) = \phi(\rx)+z\director(\rx),\qquad \rx\in\rSurf, z\in [-t/2,t/2].
\end{align}
Here, $\phi:\rSurf\to\dSurf$ is the deformation mapping the mid-surface of the initial configuration to its deformed counter-part and we define the displacement $u:\rSurf\to\R^3$ by $u=\phi-\idop$. The director is denoted by $\director:\rSurf\to \Stwo$. On the mid-surface the deformation gradient $\DGS$, Cauchy--Green strain $\CGTS$, and Green strain tensor $\GTS$ are given by $\DGS=\rgrad\phi=\Proj+\rgrad u$, $\CGTS=\DGS^\top\DGS$, and $\GTS=\frac{1}{2}(\CGTS-\Proj)=\frac{1}{2}(\rgrad u^\top\rgrad u + \rcovder u^\top + \rcovder u)$.

Following standard literature \cite{BRI17,Ciarlet05,SIMO89,CB11} the shell energy reads in TDC notation, assuming a linear material law of St-Venant-Kirchhoff, for given body force $f:\rSurf\to\R^3$
\begin{subequations}
	\begin{align}
		E_{\mathrm{shell}}^{\mathrm{Naghdi}} &= E_{\mathrm{mem}} + E_{\mathrm{bend}} + E_{\mathrm{shear}} - \int_{\rSurf}f\cdot u\dx\label{eq:Naghdi_shell_energy},\\
		E_{\mathrm{mem}}&=\frac{t}{2}\int_{\rSurf}\|\GTS\|_{\MT}^2\dx,\\	
		E_{\mathrm{bend}}&=\frac{t^3}{24}\int_{\rSurf}\|\sym{\DGS^\top\rgrad(\director)}-\rgrad\rnv\|_{\MT}^2\dx,\\
		E_{\mathrm{shear}}&=\frac{t\shearModulus\shearCorrection}{2}\int_{\rSurf}\|\DGS^\top\director\|_2^2\dx,\label{eq:shear_energy}
	\end{align}
\end{subequations}
where $E_{\mathrm{mem}}$, $E_{\mathrm{bend}}$, and $E_{\mathrm{shear}}$ are the membrane, bending, and shearing energies, respectively. Therein, $\kappa=\frac{5}{6}$ and $G=\frac{\YoungsModulus}{2(1+\PoissonRatio)}$ denote the shear correction factor and shearing modulus, and the fourth order material tensor $\MT$ involving the Young's modulus $\YoungsModulus$ and Poisson ratio $\PoissonRatio$ reads
\begin{align}
	\| \GTS\|_{\MT}^2 = \frac{\YoungsModulus}{1-\PoissonRatio^2}\big(\PoissonRatio\tr{\GTS}^2 + (1-\PoissonRatio)\GTS:\GTS\big),
\end{align}
where the plane stress assumption has already been incorporated.

Energy formulation \eqref{eq:Naghdi_shell_energy} corresponds to a 5-parameter Naghdi shell model with three displacement field components and two rotational degrees not including drilling rotations. Assuming the Kirchhoff--Love hypothesis, i.e., that the director $\director$ has to be perpendicular to the deformed mid-surface, $\director=\dnv\in\{T\dSurf\}^\perp$, we obtain the 3-parameter Koiter shell model including only displacement fields
\begin{subequations}
	\begin{align}
		E_{\mathrm{shell}}^{\mathrm{Koiter}} &= E_{\mathrm{mem}} + \tilde{E}_{\mathrm{bend}} - \int_{\rSurf}f\cdot u\dx\label{eq:Koiter_shell_energy},\\
		\tilde{E}_{\mathrm{bend}} &= \frac{t^3}{24}\int_{\rSurf}\|\DGS^\top\rgrad(\dnv)-\rgrad\rnv\|_{\MT}^2\dx.\label{eq:Koiter_bending_term}
	\end{align}
\end{subequations}
Note, that $\DGS^\top\dnv=0$ such that the shearing energy is zero in this setting.

\subsection{Function spaces}
\label{subsec:function_spaces}
In the methods described in the following sections the displacement field $u$, bending moment tensor $\bending$, shearing field $\rshear$ as well as a hybridization unknown $\angle$ will be used and discretized by suited finite elements. For a better readability and structure we postpone the definition of the specific finite element spaces to Section~\ref{sec:finite_elements} and first define function spaces on the triangulation $\T$ with appropriate continuity assumptions. Let $\Cinf[\T]:=\Pi_{T\in\T}\Cinf[T]$ denote the set of piece-wise smooth functions on $\T$, which do not require any continuity across elements. The set of piece-wise smooth vector-valued functions is given by $\Cinf[\T,\R^3]$ and analogously for matrix-valued functions. We start with the displacement field $u$ being globally continuous and therefore in the Sobolev space of weakly differentiable functions
\begin{align}
	\label{eq:displ_space}
	u\in \spaceU(\T):=\{ v\in \Cinf[\T,\R^3]\,\vert\, v \text{ is continuous} \}\subset\Hone[\T].
\end{align}
The bending moment tensor, used later as independent field, is symmetric, has values solely in the tangent space, and requires only its co-normal--co-normal component to be continuous over elements, $\E^{\mathrm{int}}:=\E\setminus\d \rSurf$,
\begin{align}
	\label{eq:bending_space}
	\bending\in \spaceB(\T):=\{ \bm{\tau}\in\Cinf[\T,T\rSurf\otimes T\rSurf]\,\vert\, \bm{\tau}^\top=\bm{\tau},\, \jump[E]{\bm{\tau}_{\rcnv\rcnv}}=0\,\forall E\in\E^{\mathrm{int}} \},
\end{align}
where $\bm{\tau}_{\rcnv\rcnv}:= \rcnv^\top \bm{\tau}\rcnv$ and $\jump[E]{\bm{\tau}_{\rcnv\rcnv}} = (\bm{\tau}_{\rcnv\rcnv})|_{T_L}-(\bm{\tau}_{\rcnv\rcnv})|_{T_R}$ denotes the co-normal--co-normal component and the jump over the edge $E=T_L\cap T_R$, respectively. The shearing field used for the 5-parameter Naghdi shell will be a tangential continuous vector field
\begin{align}
	\label{eq:shearing_space}
	\rshear\in \spaceS(\T) :=\{ \beta\in \Cinf[\T,T\rSurf]\,\vert\, \jump[E]{\beta_{\rtv}}=0\,\forall E\in\E^{\mathrm{int}} \}\subset\HCurl[\T],
\end{align}
where $\beta_{\rtv}:=\beta\cdot \rtv$ denotes the edge tangential component of $\beta$, and $\jump[E]{\beta_{\rtv}} = (\beta_{\rtv})|_{T_L}+(\beta_{\rtv})|_{T_R}=(\beta|_{T_R}-\beta|_{T_R})\cdot\rtv_E$. The function space $\HCurl[\T]$ consists of square-integrable vector fields on $\rSurf$ which surface curl is also square-integrable. We will use a hybridization field $\angle$ for which we define the following space living solely on the skeleton $\E$. We equip an $f\in\Cinf[\E]$ with the co-normal vector $\rcnv$, but, having from both element sides the same orientation as $\rcnv_E$. With $\mathrm{sgn}:\R\to\{-1,0,1\}$ denoting the sign function
\begin{align}
	\label{eq:hyb_space}
	\angle\in \spaceH(\T):=\{ f\rcnv\,\vert\, f\in\Cinf[\E] \wedge \mathrm{sgn}(\rcnv|_{T_L}\cdot\rcnv_E)=\mathrm{sgn}(\rcnv|_{T_R}\cdot\rcnv_E) \forall E=T_L\cap T_R\}.
\end{align}
Note, that for $\angle\in \spaceH(\T)$ there holds $\jump[E]{\angle_{\rcnv}}=f-f=0$ as well as $\angle\cdot \rtv_E=(\angle\cdot \rnv)|_{T_L}=(\angle\cdot \rnv)|_{T_R}=0$. Besides the jump $\jump{\cdot}$ we additionally define the mean at an edge $E=T_L\cap T_R$ by $\mean{f}:= \frac{1}{2}(f|_{T_L}+f|_{T_R})$.

\section{HHJ for nonlinear Koiter shells}
\label{sec:HHJ_nonlinear_Koiter_shells}
For the reader's convenience and for completeness we motivate and derive the HHJ method for nonlinear Koiter shells proposed in \cite{NS19}, however, in a more general procedure starting with a three-field formulation.

Assuming the Kirchhoff--Love hypothesis, i.e., that normal vectors of the initial configuration remain perpendicular to the deformed configuration of the mid-surface, leads to zero shearing energy, $E_{\mathrm{shear}}=0$. A fourth order problem due to the bending energy \eqref{eq:Koiter_bending_term} is obtained
\begin{align}
	\label{eq:bend_energy_fourth_order}
	\tilde{E}_{\mathrm{bend}}=\frac{t^3}{24}\int_{\T}\|\DGS^\top\rgrad\Big(\frac{\cof{\DGS}\rnv}{J}\Big)-\rgrad \rnv\|_{\MT}^2\dx,
\end{align}
as the pull-back $\dnv = \frac{1}{J}\cof{\DGS}\rnv$ \eqref{eq:trafo_nv} already involves one derivative. 

\subsection{Distributional curvature}
\begin{figure}[h]
	\centering
	\includegraphics[width=0.3\textwidth]{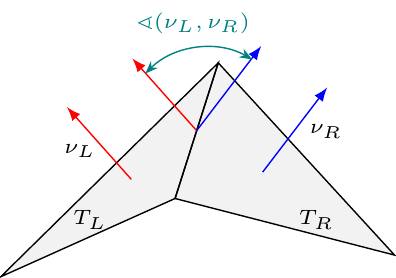}
	\caption{Dihedral angle $\sphericalangle(\nu_L,\nu_R)$ between two elements.}
	\label{fig:dihedral_angle}
\end{figure}
An additional problem of energy \eqref{eq:bend_energy_fourth_order} is that for an affine triangulation $\T$ the normal vector $\rnv$ is piece-wise constant and thus zero bending would be measured, $\rgrad\rnv|_T=0$. The whole information about curvature of an affine triangulation is concentrated in the change of the dihedral angle on the edges between two elements, cf. Figure~\ref{fig:dihedral_angle}. In \cite{NSS23,Neun21} it was shown that the distributional Weingarten tensor $\rgrad\rnv$ acts on co-normal--co-normal continuous functions and reads
\begin{align}
	\langle\rgrad\rnv,\bm{\Psi}\rangle = \sum_{T \in \T}\int_{T}\rgrad\rnv:\bm{\Psi}\,\dx + \sum_{E\in\E}\int_E\sphericalangle(\rnv_L,\rnv_R)\bm{\Psi}_{\rcnv\rcnv}\,\ds,\quad \forall \bm{\Psi}\in\spaceB(\T).
\end{align}
For an affine triangulation the element terms are zero and only the angle remains. This approach of approximating the curvature by means of angles is related to Steiner's offset formula \cite{Stein40} and well-known in the field of discrete differential geometry, see e.g. \cite{GGRZ06} and therein references.

\subsection{Three-field and two-field mixed formulation of HHJ}
To incorporate the distributional curvature in the bending energy a lifting of the distributional curvature difference to a regular function is performed. Therefore, the curvature difference $\curv^\mathrm{diff}$ of the initial and deformed configuration is introduced together with the bending moment $\bending$ as energetic conjugate acting as Lagrange multiplier. The Lagrangian of the corresponding Hu--Washizu three-field bending energy is
\begin{align}
	\L_{\mathrm{bend}}(u,\curv^\mathrm{diff},\bending)=&\int_{\T}\frac{t^3}{24}\|\curv^\mathrm{diff}\|^2_{\MT} + \big((\DGS^\top\rgrad\big(\frac{\cof{\DGS}\rnv}{J}\big)-\rgrad\rnv)-\curv^\mathrm{diff}\big):\bending\,\dx\nonumber \\
	&+ \sum_{E\in\E}\int_E (\sphericalangle(\nu_L,\nu_R)\circ\phi-\sphericalangle(\rnv_L,\rnv_R))\bending_{\rcnv\rcnv}\,\ds.\label{eq:HHJ_bend_three_field}
\end{align}

Note, that the bending moment tensor $\bending$ is symmetric, lives only on the tangent space of the mid-surface, $\bending\in T\rSurf\otimes T\rSurf$, and requires continuous co-normal--co-normal component. Therefore, we use $\bending\in\spaceB(\T)$, cf. \eqref{eq:bending_space}.

For a linear material law, represented through $\MT$, we can eliminate $\curv^{\mathrm{diff}}$ leading to a Hellinger--Reissner mixed two-field formulation.
\begin{lemma}
	If $\MT$ is invertible \eqref{eq:HHJ_bend_three_field} is equivalent to
	\begin{align}
		\label{eq:first_HHJ_method}
		\begin{split}
			\L_{\mathrm{bend}}(u,\bending)= -\frac{6}{t^3}&\int_{\T}\|\bending\|^2_{\MT^{-1}}\,\dx+\int_{\T} (\DGS^\top\rgrad(\dnv)-\rgrad\rnv):\bending\dx \\
			&+ \sum_{E\in\E} \int_E (\sphericalangle(\nu_L,\nu_R)\circ\phi-\sphericalangle(\rnv_L,\rnv_R))\bending_{\rcnv\rcnv}\ds,
		\end{split}
	\end{align}
	where the inversion of the material law reads\footnote{A wrong factor of 2 in \cite[(2.20)]{NS19} has been corrected here.}
	\begin{align}
		\|\bending\|^2_{\MT^{-1}}= \frac{1+\PoissonRatio}{\YoungsModulus}\big(\bending:\bending-\frac{\PoissonRatio}{\PoissonRatio+1}\tr{\bending}^2\big).\label{eq:inverted_material_law}
	\end{align}
\end{lemma}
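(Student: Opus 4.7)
The plan is to eliminate $\curv^{\mathrm{diff}}$ from the three-field Lagrangian \eqref{eq:HHJ_bend_three_field} by means of its Euler--Lagrange equation, and then to verify the explicit form of the inverse material law \eqref{eq:inverted_material_law} by direct computation in the two-dimensional tangent space $T\rSurf\otimes T\rSurf$.

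First I would observe that $\curv^{\mathrm{diff}}$ enters \eqref{eq:HHJ_bend_three_field} only through the terms
\[
\int_{\T}\Big(\tfrac{t^3}{24}\|\curv^{\mathrm{diff}}\|^2_{\MT}-\curv^{\mathrm{diff}}:\bending\Big)\dx,
\]
and that no continuity is imposed on $\curv^{\mathrm{diff}}$ across elements. The stationarity condition with respect to $\curv^{\mathrm{diff}}$ is therefore a pointwise linear equation on each element: since the derivative of the quadratic form $\|\cdot\|^2_{\MT}$ in direction $\delta\curv$ is $2\,\MT\curv^{\mathrm{diff}}:\delta\curv$, one obtains $\tfrac{t^3}{12}\MT\curv^{\mathrm{diff}}=\bending$, hence $\curv^{\mathrm{diff}}=\tfrac{12}{t^3}\MT^{-1}\bending$. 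Substituting this back, the two $\curv^{\mathrm{diff}}$-terms collapse to $-\tfrac{6}{t^3}\|\bending\|^2_{\MT^{-1}}$. Using the transformation rule \eqref{eq:trafo_nv}, $\dnv=J^{-1}\cof{\DGS}\rnv$, the remaining term $\DGS^\top\rgrad\big(\tfrac{\cof{\DGS}\rnv}{J}\big):\bending$ becomes $\DGS^\top\rgrad(\dnv):\bending$, while the edge terms are untouched by the elimination. This yields exactly \eqref{eq:first_HHJ_method}.

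The more tedious step, which I expect to be the main obstacle for getting the constants correct, is the derivation of the explicit inverse \eqref{eq:inverted_material_law}. Here I would proceed as follows. The plane-stress material tensor acts on tangent symmetric tensors $\bA\in T\rSurf\otimes T\rSurf$ as
\[
\MT\bA \;=\; \tfrac{\YoungsModulus}{1-\PoissonRatio^2}\bigl(\PoissonRatio\,\tr{\bA}\,\Proj+(1-\PoissonRatio)\,\bA\bigr),
\]
so it has the structure $\MT\bA=c_1\tr{\bA}\Proj+c_2\bA$ with $c_1=\tfrac{\YoungsModulus\PoissonRatio}{1-\PoissonRatio^2}$ and $c_2=\tfrac{\YoungsModulus}{1+\PoissonRatio}$. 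Taking the trace on $T\rSurf$ (where $\tr{\Proj}=2$) of $\bending=\MT\bA$ gives $\tr{\bending}=(2c_1+c_2)\tr{\bA}$, and a short computation shows $2c_1+c_2=\tfrac{\YoungsModulus}{1-\PoissonRatio}$. Solving for $\bA$ yields
\[
\MT^{-1}\bending \;=\; \tfrac{1}{c_2}\bending \;-\; \tfrac{c_1}{c_2(2c_1+c_2)}\tr{\bending}\,\Proj
\;=\; \tfrac{1+\PoissonRatio}{\YoungsModulus}\bending\;-\;\tfrac{\PoissonRatio}{\YoungsModulus}\tr{\bending}\,\Proj.
\]

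Finally I would contract with $\bending$: using $\Proj:\bending=\tr{\bending}$ (since $\bending$ is tangential) gives
\[
\|\bending\|^2_{\MT^{-1}}=\MT^{-1}\bending:\bending=\tfrac{1+\PoissonRatio}{\YoungsModulus}\bending:\bending-\tfrac{\PoissonRatio}{\YoungsModulus}\tr{\bending}^2,
\]
which factors as $\tfrac{1+\PoissonRatio}{\YoungsModulus}\bigl(\bending:\bending-\tfrac{\PoissonRatio}{1+\PoissonRatio}\tr{\bending}^2\bigr)$, exactly \eqref{eq:inverted_material_law}. The only genuinely delicate points are remembering that the identity on tangent tensors is $\Proj$ (not the ambient $\bI$) and that $\tr{\Proj}=2$; together with tracking the factor $t^3/24$ carefully, this gives the stated equivalence.
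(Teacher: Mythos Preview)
Your proposal is correct and follows essentially the same route as the paper: take the variation with respect to $\curv^{\mathrm{diff}}$, solve the resulting pointwise linear relation $\curv^{\mathrm{diff}}=\tfrac{12}{t^3}\MT^{-1}\bending$, and substitute back. The paper merely states \eqref{eq:inverted_material_law} without derivation, so your explicit computation of $\MT^{-1}$ on the two-dimensional tangent space (using $\tr{\Proj}=2$ and the ansatz $\MT\bA=c_1\tr{\bA}\Proj+c_2\bA$) is a welcome addition rather than a deviation.
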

\begin{proof}
	We compute the variation of \eqref{eq:HHJ_bend_three_field} with respect to $\curv^{\mathrm{diff}}$
	\begin{align*}
		D_{\curv^{\mathrm{diff}}}\L(u,\curv^{\mathrm{diff}},\bending)[\delta \curv]=\frac{t^3}{12}\int_{\T}(\MT\curv^{\mathrm{diff}}):\delta \curv-\bending:\delta\curv\,\dx\overset{!}{=}0\forall\delta\curv,
	\end{align*}
	from which we extract $\curv^{\mathrm{diff}}=\frac{12}{t^3}\MT^{-1}\bending$. Inserting into the Lagrange functional we get
	\begin{align*}
		\L(u,\bending)= \int_{\T}\frac{6}{t^3}\|\bending\|_{\MT^{-1}}^2&-\frac{12}{t^3}\|\bending\|^2_{\MT^{-1}}\,\dx+ \int_{\T}(\DGS^\top\rgrad\Big(\frac{\cof{\DGS}\rnv}{J}\Big)-\rgrad\rnv):\bending\,\dx\nonumber \\
		&+ \sum_{E\in\E}\int_E (\sphericalangle(\nu_L,\nu_R)\circ\phi-\sphericalangle(\rnv_L,\rnv_R))\bending_{\rcnv\rcnv}\,\ds.
	\end{align*}
	The claim follows by simplifying the first two terms.
\end{proof}
The volume term of the bending formulation can be rewritten as follows.
\begin{lemma}
	\label{lem:Koiter_bending_volterm}
	There holds for all $T\in\T$
	\begin{align}
		\int_T(\DGS^\top\rgrad(\dnv)-\rgrad\rnv):\bending\dx=-\int_T (\Hessian_{\dnv}+(1-\rnv\cdot\dnv)\rgrad\rnv):\bending\dx,
	\end{align}
	where $\Hessian_{\dnv}=\sum_{i=1}^3\rhesse u_i\dnv[i]$ with $\rhesse u_i$ the surface Hessian \eqref{eq:surface_hesse}.
\end{lemma}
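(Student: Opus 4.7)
My plan is to derive the identity pointwise on $T$ and integrate. The starting point is the observation that $\DGS^\top(\dnv\cphi)=0$: each row of $\DGS=\rgrad\phi$ is a surface gradient, hence lies in $T\rSurf$, so $\DGS\rnv=0$; combined with $\dnv\cphi\in\{T\dSurf\}^\perp$ and $\DGS(T\rSurf)=T\dSurf$, every test direction $w\in\R^3$ gives $w^\top\DGS^\top(\dnv\cphi)=(\dnv\cphi)\cdot\DGS\Proj w=0$. Differentiating this identity tangentially and using the product rule converts the awkward first-order quantity $\DGS^\top\rgrad(\dnv\cphi)$ into a second-order quantity:
\begin{align*}
(\DGS^\top\rgrad(\dnv\cphi))_{ik}=-\sum_{j=1}^3(\partial_k\partial_i\phi_j)\,\dnv[j].
\end{align*}

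Next I substitute $\phi=\idop+u$, so that $\partial_i\phi_j=\Proj_{ij}+\partial_iu_j$, and split the second derivative into a Weingarten contribution from $\Proj=\bI-\rnv\otimes\rnv$ and a displacement contribution. The Weingarten piece is $\partial_k\Proj_{ij}=-(\partial_k\rnv_i)\rnv_j-\rnv_i(\partial_k\rnv_j)$, which after pairing with $\dnv[j]$ splits into a term proportional to $(\rnv\cdot\dnv)\,\rgrad\rnv$ and a term of the form $\rnv\otimes(\text{row vector})$. The displacement piece yields $\sum_j(\partial_k\partial_iu_j)\,\dnv[j]$.

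Contracting with $\bending$ is where the structure of $\spaceB(\T)$ does the real work. Since $\bending\in T\rSurf\otimes T\rSurf$ is symmetric with $\bending\rnv=0$ and $\rnv^\top\bending=0$, the spurious $\rnv_i(\cdot)$ term from the Weingarten expansion contracts to zero. Moreover, the identity $\Proj\bending\Proj=\bending$ lets me insert projections for free, so $\sum_j(\partial_k\partial_iu_j)\,\dnv[j]:\bending=\sum_j(\Proj\,\rgrad\rgrad u_j):\bending\,\dnv[j]=\sum_j(\rhesse u_j):\bending\,\dnv[j]=\Hessian_{\dnv}:\bending$. Collecting the three contributions yields
\begin{align*}
(\DGS^\top\rgrad(\dnv\cphi)):\bending=(\rnv\cdot\dnv)\,\rgrad\rnv:\bending-\Hessian_{\dnv}:\bending,
\end{align*}
and subtracting $\rgrad\rnv:\bending$ gives exactly $-(\Hessian_{\dnv}+(1-\rnv\cdot\dnv)\rgrad\rnv):\bending$. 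Integration over $T$ completes the proof.

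The only delicate point I anticipate is the last bookkeeping step: $\rhesse u_j$ carries one surface projection on the first index (per the definition $\rhesse u_j=\rcovder\rgrad u_j=\Proj\rgrad\rgrad u_j$), while the raw identity produces an unprojected $\rgrad\rgrad u_j$. The identification is legitimate only because $\bending$ is tangential on \emph{both} sides; losing track of which side carries the projection would break the argument. Everything else is a careful but routine product-rule computation.
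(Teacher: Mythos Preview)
Your argument is correct. The paper defers to \cite{NS19}, but the parallel Naghdi computation in the proof of the next lemma reveals the intended route: one writes $\int_T\DGS^\top\rgrad(\dnv):\bending=\int_T\rgrad(\dnv):\DGS\bending$, integrates by parts, and observes that both the boundary term and the contribution $\dnv\cdot\DGS\,\div_{\rSurf}\bending$ vanish because $\DGS^\top\dnv=0$; what remains is $-\sum_j\dnv[j](\bH_j:\bending)$ with $\bH_j=\rgrad(\Proj_j)+\rhesse u_j$, and expanding $\rgrad(\Proj_j)$ yields the claim.

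Your approach is more elementary: instead of passing through an integral formulation and then recovering a pointwise statement, you differentiate the identity $\DGS^\top\dnv=0$ directly along the surface and never invoke integration by parts, so the integral over $T$ in the final formula is purely cosmetic. Both arguments rest on the same three ingredients---the orthogonality $\DGS^\top\dnv=0$, the splitting of $\rgrad\rgrad\phi_j$ into a Weingarten and a displacement piece, and the tangentiality of $\bending$ that absorbs the missing projection and kills the $\rnv_i$-prefixed term---but the paper's route has the side benefit of exposing the structural analogy with the Naghdi extension, where the divergence term no longer vanishes and must be integrated back to produce the extra $\rgrad\rshear:\bending$ contribution.
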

\begin{proof}
	See \cite[(2.15) and Appendix A]{NS19}.
\end{proof}
Therefore, the two-field HHJ method for nonlinear Koiter shells reads
\begin{align}
	\L^{\mathrm{HHJ}}(u,\bending)&= E_{\mathrm{mem}}(u)-\frac{6}{t^3}\int_{\T}\|\bending\|^2_{\MT^{-1}}\,\dx-\int_{\T} (\Hessian_{\dnv}+(1-\rnv\cdot\dnv)\rgrad\rnv):\bending\dx \nonumber\\
	&\quad+ \sum_{E\in\E} \int_E (\sphericalangle(\nu_L,\nu_R)\circ\phi-\sphericalangle(\rnv_L,\rnv_R))\bending_{\rcnv\rcnv}\ds.\label{eq:HHJ_nonlinear}
\end{align}
The approach entails two important advantages:
\begin{enumerate}
	\item the fourth order problem is reduced to a second order mixed saddle point problem,
	\item the distributional curvature including the angle difference at the edges is directly incorporated into the formulation such that also for affine triangulations the correct change of curvature is measured.
\end{enumerate}

\subsection{Hybridization}
\label{subsec:hybridization}
The possible disadvantage of having a saddle point instead of a minimization problem can be overcome by hybridization techniques \cite{BBF13}. To this end the co-normal--co-normal continuity of $\bending\in\spaceB(\T)$ is broken, making $\bending$ completely discontinuous, denoted by $\bending\in \spaceB^{\mathrm{dc}}(\T)$. Then the continuity gets reinforced weakly in the formulation by means of a Lagrange multiplier $\angle\in\spaceH(\T)$, see \eqref{eq:hyb_space}, which lives solely on the skeleton $\E$ and is co-normal continuous, $\jmp{\angle_{\rcnv}}=0$,
\begin{align}
	\begin{split}
		\L^{\mathrm{hyb}}_{\mathrm{bend}}(u,\bending,\angle)= &-\int_{\T}\frac{6}{t^3}\|\bending\|^2_{\MT^{-1}}\,\dx+\int_{\T} (\DGS^\top\rgrad(\dnv)-\rgrad\rnv):\bending\dx\\ &+\sum_{E\in\E}\int_E(\sphericalangle(\nu_L,\nu_R)\circ\phi-\sphericalangle(\rnv_L,\rnv_R))\mean{\bending_{\rcnv\rcnv}}+ \angle_{\rcnv}\jmp{\bending_{\rcnv\rcnv}}\ds.
	\end{split}
\end{align}
The physical meaning of $\angle$ will be discussed in Section~\ref{sec:physical_meaning_hyb} as it depends on the specific implementation of the angle difference. After performing static condensation the dofs of $\bending$ are eliminated at element level and the remaining global system in $(u,\angle)$ corresponds to a minimization problem again. For a general description of the condensation procedure, we refer to e.g. \cite{Hugh00,BBF13} and for the TDNNS method \cite[Section 5.4]{Neun21}. The hybridized version of \eqref{eq:HHJ_nonlinear} is given by
\begin{align}
	\L_{\mathrm{hyb}}^{\mathrm{HHJ}}(u,\bending,\angle)&= E_{\mathrm{mem}}(u)-\frac{6}{t^3}\int_{\T}\|\bending\|^2_{\MT^{-1}}\,\dx-\int_{\T} (\Hessian_{\dnv}+(1-\rnv\cdot\dnv)\rgrad\rnv):\bending\dx \nonumber \\
	&\quad+ \sum_{E\in\E}\int_E(\sphericalangle(\nu_L,\nu_R)\circ\phi-\sphericalangle(\rnv_L,\rnv_R))\mean{\bending_{\rcnv\rcnv}}+ \angle_{\rcnv}\jmp{\bending_{\rcnv\rcnv}}\ds.\label{eq:HHJ_hyb_nonlinear}
\end{align}
As we will discuss in Section~\ref{sec:computational aspects} the current form of angle difference computation leads to numerical instabilities and present therein a stable algorithm.

\section{The TDNNS method for nonlinear Naghdi shells}
\label{sec:TDNNS_nonlinear_Naghdi_shells}
For Naghdi shells  shearing/rotational dofs need to be added. Although it is possible to obtain a second order bending energy term by using the rotation in the bending, we use the same structure as in the previous section and add shearing dofs in a hierarchical way described in the following. The director $\director$ can be defined in the forms \cite{OSRB2017}
\begin{figure}
	\centering
	\includegraphics[width=0.55\textwidth]{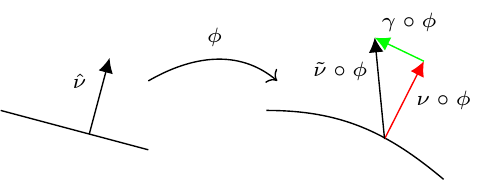}
	\caption{Hierarchical approach for director $\director$. $\dnv$ and $\dshear$ are the normal and shearing vectors on the deformed configuration.}
	\label{fig:hierarchical_director}
\end{figure}
\begin{align}
	\label{eq:director_def}
	\director=\tilde{\bR}(\dshear)\frac{\cof{\DGS}\rnv}{\|\cof{\DGS}\rnv\|} \qquad \mathrm{ or }\qquad\director=\frac{\cof{\DGS}\rnv+\dshear}{\|\cof{\DGS}\rnv+\dshear\|},
\end{align}
where $\tilde{\bR}(\dshear)\in \SO{3}$ denotes a rotation matrix depending on two angle parameters stored in $\dshear$, or $\dshear\in T\dSurf$ denoting a shear vector in the tangent space of the deformed configuration, cf. Figure~\ref{fig:hierarchical_director}. In both versions $\dshear$ appears nonlinearly guaranteeing that the director $\director$ has unit length, $\|\director\|=1$. Possible direct discretizations of the space of rotations $\SO{3}$ are given by e.g. geometric finite elements (GFEs) \cite{HS2020,NSBN2023} or using the Rodrigues rotation vector \cite{CMPS2021,SPI2023}. In this work, however, we consider a different approach. As discussed in \cite{OSRB2017} the shearing parameter turns out to be small in realistic shell examples, $\dshear=\mathcal{O}(\veps)$, $\varepsilon\ll 1$, even for large deformations. This motivates to linearize \eqref{eq:director_def}. As the tangent space at the identity of $\SO{3}$ is the set of skew symmetric matrices there holds 
\begin{align}
	\label{eq:lin_director}
	\director=\tilde{\bR}(\dshear)\dnv \approx (\bI+\skew{\gamma_1,\gamma_2})\dnv = \dnv + \dshear,
\end{align}
where $\gamma_1,\gamma_2$ define the two rotation parameters and we defined the shear $\dshear:=\skew{\gamma_1,\gamma_2}\dnv \in T\dSurf=\{\dnv\}^\perp$. Therefore, an additive splitting of the director into the deformed normal and the shearing is obtained. As $\dshear$ lies in the tangent space of the deformed configuration we can use the classical push forward $\DGS$ or the covariant mapping, performed by the Moore--Penrose pseudo inverse, for the transformation
\begin{align}
	\label{eq:trafo_shear}
	\dshear=\DGS\rshear\qquad \mathrm{ or }\qquad \dshear=\DGS^{\dagger^\top}\rshear.
\end{align}
We consider the second option of the covariant transformation as we are going to discretize $\rshear$ with $\HCurl$-conforming tangential continuous N\'ed\'elec finite elements, described in Section~\ref{sec:finite_elements}, i.e. $\rshear\in\spaceS(\T)$ \eqref{eq:shearing_space}. These finite elements correspond to Whitney forms \cite{whitney57} discretizing so-called one-forms and the covariant transformation is the natural choice preserving the tangential continuity.

In analogy to the HHJ method in the previous section we can rewrite the bending element term.
\begin{lemma}
	There holds for all $T\in\T$
	\begin{align}
		\int_T(\sym{\DGS^\top\rgrad(\director)}-\rgrad\rnv):\bending\dx =-\int_T (\Hessian_{\director}+(1-\rnv\cdot\director)\rgrad\rnv-\rgrad\rshear):\bending\dx.\label{eq:volume_term_rewritten}
	\end{align}
\end{lemma}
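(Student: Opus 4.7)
The plan is to reduce the identity to its Koiter analogue (Lemma~\ref{lem:Koiter_bending_volterm}) plus a shearing correction, by exploiting the pointwise algebraic identity $\DGS^\top\director=\rshear$. This follows from two facts: first, $\DGS^\top\dnv=0$, since $\DGS\rnv=0$ forces $\DGS^\top$ to have range in $T\rSurf$, while $\dnv\perp T\dSurf=\mathrm{range}(\DGS|_{T\rSurf})$ kills the remaining tangential components; second, the covariant transformation $\dshear=\DGS^{\dagger^\top}\rshear$ together with $\rshear\in T\rSurf$ gives $\DGS^\top\dshear=(\DGS^\dagger\DGS)^\top\rshear=\Proj\rshear=\rshear$.

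I would begin by noting that $\bending$ is symmetric, hence the symmetrisation on the left-hand side is inessential and may be dropped under the contraction with $\bending$. Differentiating the identity $\DGS^\top\director=\rshear$ via the product rule yields
\begin{align*}
\DGS^\top\rgrad\director = \rgrad\rshear - (\rgrad\DGS^\top)\,\director,
\end{align*}
so the task reduces to identifying $(\rgrad\DGS^\top)\director$ after contraction with $\bending$.

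To handle $(\rgrad\DGS^\top)\director:\bending$, I would split $\phi=\idop+u$, so that $\DGS^\top=\Proj+(\rgrad u)^\top$ and correspondingly $\rgrad\DGS^\top=\rgrad\Proj+\rgrad(\rgrad u)^\top$. The crucial simplifying fact is that $\bending$ is symmetric and takes values in $T\rSurf\otimes T\rSurf$, i.e.\ $\bending\,\Proj=\Proj\,\bending=\bending$, so any matrix of the form $(\bI-\Proj)A$ annihilates $\bending$ under the Frobenius pairing. In particular, the iterated surface gradient $\rgrad\rgrad u_j$ may be replaced by its Riemannian Hessian $\rhesse u_j$ inside the contraction; the displacement contribution to $(\rgrad\DGS^\top)\director:\bending$ is therefore exactly $\Hessian_\director:\bending$.

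The main obstacle is the identity-part contribution, for which the key technical step is the pointwise formula
\begin{align*}
\sum_{j=1}^{3} \rhesse\idop_j\, v_j = -(\rnv\cdot v)\,\rgrad\rnv, \qquad v\in\R^3,
\end{align*}
which follows from $\rgrad\idop_j=\Proj e_j=e_j-\rnv_j\rnv$, $\Proj\rnv=0$, and $(\rgrad\rnv)^\top\rnv=0$ (the latter from $|\rnv|^2=1$). Taking $v=\director$ gives $(\rgrad\DGS^\top)\director:\bending=(\Hessian_\director-(\rnv\cdot\director)\rgrad\rnv):\bending$. Substituting back, subtracting $\rgrad\rnv:\bending$ from both sides, and integrating over $T$ yields the asserted identity. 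As a sanity check, setting $\dshear=0$ (hence $\rshear=0$ and $\director=\dnv$) recovers Lemma~\ref{lem:Koiter_bending_volterm} exactly.
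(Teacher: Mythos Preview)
Your proof is correct and takes a genuinely different route from the paper. The paper splits $\director=\dnv+\dshear$, invokes Lemma~\ref{lem:Koiter_bending_volterm} for the $\dnv$-part, and then, following the structure of \cite[Appendix~A]{NS19}, treats the shearing part $\DGS^\top\rgrad(\dshear):\bending$ by two successive integrations by parts (the boundary terms generated in the first step are eventually undone by the second). You instead observe the pointwise identity $\DGS^\top\director=\rshear$, differentiate it via the product rule, and compute $(\rgrad\DGS^\top)\director:\bending$ directly by splitting $\phi=\idop+u$ and using $\rhesse\idop_j=-\rnv_j\rgrad\rnv$. This yields the statement as a \emph{pointwise} algebraic identity, so the integrals are in fact inessential; no integration by parts is needed at all. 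Your argument is shorter and self-contained (it does not rely on Lemma~\ref{lem:Koiter_bending_volterm} but rather re-proves it in the special case $\rshear=0$), whereas the paper's approach has the advantage of making the additive Koiter-plus-shear structure of the formula manifest at each step. One minor remark: your opening sentence says you will ``reduce the identity to its Koiter analogue plus a shearing correction'', but the body of the argument does not actually invoke Lemma~\ref{lem:Koiter_bending_volterm}; the reduction happens only at the very end as a sanity check.
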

\begin{proof}
	As $\bending$ is symmetric we can neglect the symmetrization $\sym{\cdot}$. Further, as $\director=\dnv+\dshear$ we use Lemma~\ref{lem:Koiter_bending_volterm} for $\dnv$ and only need to consider the shearing part. Following the structure of \cite[Appendix A]{NS19} we compute with integration-by-parts \eqref{eq:ibp_tangent} and the product rule
	\begin{align*}
		\int_{T}\DGS^\top\rgrad(\dshear):&\bending\dx =\int_T \rgrad(\DGS^{\dagger^\top}\rshear):\DGS\bending\dx \\
		&= -\int_T \DGS^{\dagger^\top}\rshear\cdot(\mat{\bH_1:\bending\\\bH_2:\bending\\\bH_3:\bending}+\DGS\div_{\rSurf}\bending)\dx+\int_{\d T}\DGS^{\dagger^\top}\rshear\cdot\DGS\bending_{\rcnv}\ds,
	\end{align*}
	where $\bH_i = \rgrad(\Proj_i)+\rhesse u_i$, $\Proj_i$ denoting the $i$th column of $\Proj$. Further
	\begin{align*}
		\sum_{i=1}^3\dshear[i](\rgrad(\Proj_i)+\rhesse u_i):\bending
		&=\sum_{i=1}^3\dshear[i]((\rhesse u_i-\rgrad\rnv_i\otimes\rnv-\rnv_i\rgrad\rnv):\bending\\
		&=(\Hessian_{\dshear}-(\dshear\cdot\rnv)\rgrad\rnv):\bending
	\end{align*}
	and thus with $\DGS^\dagger\DGS = \Proj$ and integration-by-parts
	\begin{align*}
		\int_{T}\DGS^\top\rgrad(\dshear):\bending\dx
		&= -\int_T (\Hessian_{\dshear}-(\dshear\cdot\rnv)\rgrad\rnv-\rgrad\rshear):\bending\dx.
	\end{align*}
	Combining with Lemma~\ref{lem:Koiter_bending_volterm} finishes the proof.
\end{proof}
The additive splitting \eqref{eq:lin_director} resembles also in the boundary part of the bending energy. From the TDNNS method \cite{PS11,PS17} we obtain that the appropriate term is the co-normal components of the shear, being part of its distributional derivative
\begin{align}
	\int_{E}(\sphericalangle(\nu_L,\nu_R)\circ\phi-\sphericalangle\left(\rnv_L,\rnv_R\right)-\jmp{\rshear_{\rcnv}})\bending_{\rcnv\rcnv}\,ds.
\end{align}

For the shearing energy \eqref{eq:shear_energy} we directly deduce with $\DGS^\top\dnv =0$
\begin{align}
	\|\DGS^\top\director\|^2 = \|\DGS^\top(\dnv+\dshear)\|^2=\|\rshear\|^2.
\end{align}
Therefore, the (hybridized) TDNNS method for nonlinear Naghdi shells reads
\begin{subequations}
	\label{eq:TDNNS_nonlinear_Naghdi_shell}
	\begin{align}
		&\L^{\mathrm{TDNNS}}(u,\bending,\rshear)= E_{\mathrm{mem}}(u)-\int_{\T}\frac{6}{t^3}\|\bending\|^2_{\MT^{-1}}+ (\Hessian_{\director}+(1-\rnv\cdot\director)\rgrad\rnv-\rgrad\rshear):\bending\dx \nonumber\\
		&+\frac{t\kappa G}{2}\int_{\T}\|\rshear\|^2\dx+ \sum_{E\in\E} \int_E (\sphericalangle(\nu_L,\nu_R)\circ\phi-\sphericalangle(\rnv_L,\rnv_R)-\jmp{\rshear_{\rcnv}})\bending_{\rcnv\rcnv}\ds,\label{eq:TDNNS_nonlinear}\\
		&\L_{\mathrm{hyb}}^{\mathrm{TDNNS}}(u,\bending,\rshear,\angle)= E_{\mathrm{mem}}(u)-\int_{\T}\frac{6}{t^3}\|\bending\|^2_{\MT^{-1}}+ (\Hessian_{\director}+(1-\rnv\cdot\director)\rgrad\rnv-\rgrad\rshear):\bending\dx \nonumber \\
		&+\frac{t\kappa G}{2}\int_{\T}\|\rshear\|^2\dx+  \sum_{E\in\E}\int_E(\sphericalangle(\nu_L,\nu_R)\circ\phi-\sphericalangle(\rnv_L,\rnv_R))\mean{\bending_{\rcnv\rcnv}}-\jmp{\rshear_{\rcnv}\bending_{\rcnv\rcnv}}+ \angle_{\rcnv}\jmp{\bending_{\rcnv\rcnv}}\ds.\label{eq:TDNNS_hyb_nonlinear}
	\end{align}
\end{subequations}
We will justify that \eqref{eq:TDNNS_nonlinear_Naghdi_shell} extends the TDNNS method from linear Reissner--Mindlin plates \cite{PS17} to nonlinear Naghdi shells in the following section.

\section{Linearization to linear Kirchhoff--Love and Reissner--Mindlin plates and shells}
\label{sec:linearization}
In Sections~\ref{sec:HHJ_nonlinear_Koiter_shells} and \ref{sec:TDNNS_nonlinear_Naghdi_shells} we proposed the HHJ and TDNNS method for nonlinear Koiter and Naghdi shells, respectively. We now linearize these two models first in the setting of shells and then for plates justifying that the presented methods extend the HHJ and TDNNS method from linear plates to nonlinear shells. For ease of notation we consider the formulations without hybridization variable $\angle$ and emphasize that the linearization procedure can be applied in the same manner.

We start with the element terms and then focus on the non-standard edge terms.
\begin{lemma}
	\label{lem:linearization_volume_terms}
	Let $\rgrad u=\mathcal{O}(\veps)$, $\rshear=\mathcal{O}(\veps)$, and $\rgrad\rshear=\mathcal{O}(\veps)$. Then there holds
	\begin{subequations}
		\begin{align}
			\director&= \rnv - \rgrad u^\top\rnv + \rshear+\mathcal{O}(\veps^2),\label{eq:lin_nv}\\
			\GTS(u) &=\sym{\rcovder u}+\mathcal{O}(\veps^2),\label{eq:lin_membrane}\\
			-(\Hessian_{\director}+(1-\director\cdot\rnv)\rgrad\rnv-\rgrad\rshear) &= -(\Hessian_{\rnv}-\rgrad\rshear)+\mathcal{O}(\veps^2) \label{eq:lin_bend}.
		\end{align}
	\end{subequations}
\end{lemma}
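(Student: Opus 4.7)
The plan is to tackle the three identities in the order \eqref{eq:lin_membrane}, \eqref{eq:lin_nv}, \eqref{eq:lin_bend}: the strain expansion \eqref{eq:lin_membrane} is immediate, \eqref{eq:lin_nv} carries the main computational effort, and \eqref{eq:lin_bend} reduces to order counting once \eqref{eq:lin_nv} is available. For \eqref{eq:lin_membrane}, writing $\GTS = \frac{1}{2}(\rgrad u^\top \rgrad u + \rcovder u^\top + \rcovder u)$ exhibits the quadratic term $\tfrac{1}{2}\rgrad u^\top \rgrad u$ as of order $\veps^2$, while the remaining symmetrized piece is precisely $\sym{\rcovder u}$.

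For \eqref{eq:lin_nv}, rather than expanding \eqref{eq:trafo_nv} by cofactor differentiation, I would exploit the implicit characterization of $\dnv$ as the unit vector (up to sign) satisfying $\DGS^\top \dnv = 0$. Writing the ansatz $\dnv = \alpha\rnv + w$ with $w \in T\rSurf$, the orthogonality $(\Proj + \rgrad u)^\top(\alpha\rnv + w) = 0$ rearranges, using $\Proj\rnv = 0$ and $\Proj w = w$, into $w = -\alpha\,\rgrad u^\top\rnv - \rgrad u^\top w$. Iterating once gives $w = -\rgrad u^\top\rnv + \mathcal{O}(\veps^2)$, and the normalization $\alpha^2 + \|w\|^2 = 1$ yields $\alpha = 1 + \mathcal{O}(\veps^2)$. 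Combining with the additive split $\director = \dnv + \dshear$ from \eqref{eq:lin_director} and the covariant transport $\dshear = \DGS^{\dagger^\top}\rshear$, where $\DGS^{\dagger^\top}$ reduces to the identity on $T\rSurf$ at leading order because $\Proj$ acts as the identity there, yields $\dshear = \rshear + \mathcal{O}(\veps^2)$ and hence the claimed expansion of $\director$.

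For \eqref{eq:lin_bend}, it suffices to show that $\Hessian_{\director} + (1 - \director\cdot\rnv)\rgrad\rnv = \Hessian_{\rnv} + \mathcal{O}(\veps^2)$. Using \eqref{eq:lin_nv}, I would first compute $\director\cdot\rnv = 1 - \rnv\cdot(\rgrad u^\top\rnv) + \rshear\cdot\rnv + \mathcal{O}(\veps^2)$; both correction terms vanish exactly, since $\rshear\in T\rSurf$ is orthogonal to $\rnv$ and $\rnv\cdot(\rgrad u^\top\rnv) = (\rgrad u\,\rnv)\cdot\rnv = 0$ by the defining property $\rgrad u\,\rnv = 0$ of the tangential gradient. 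Hence $(1 - \director\cdot\rnv)\rgrad\rnv = \mathcal{O}(\veps^2)$. For the other contribution, $\Hessian_{\director} - \Hessian_{\rnv} = \sum_{i=1}^3 \rhesse u_i (\director_i - \rnv_i)$ pairs each $\rhesse u_i = \mathcal{O}(\veps)$ with a component $\director_i - \rnv_i = \mathcal{O}(\veps)$ coming from \eqref{eq:lin_nv}, yielding $\mathcal{O}(\veps^2)$.

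The main obstacle lies entirely in \eqref{eq:lin_nv}, and more specifically in the clean treatment of $\dshear = \rshear + \mathcal{O}(\veps^2)$: one must confirm that $\DGS^\dagger$ acts as the identity on $T\rSurf$ at leading order and that its first-order correction brings in an additional factor of $\rgrad u$, which combined with the smallness of $\rshear$ yields the desired remainder. With this in hand, \eqref{eq:lin_membrane} and \eqref{eq:lin_bend} are essentially routine.
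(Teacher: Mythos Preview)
Your proposal is correct and covers all three identities, but for \eqref{eq:lin_nv} and \eqref{eq:lin_bend} it takes genuinely different routes from the paper.

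For \eqref{eq:lin_nv}, the paper expands the explicit formula $\dnv=\|\cof{\DGS}\rnv\|^{-1}\cof{\DGS}\rnv$ by Taylor at $\rgrad u=0$, using the tensor cross product identities of Lemma~\ref{lem:tcp_id} to simplify $(\Proj\tcross\rgrad u)\rnv$. You instead exploit the implicit characterization $\DGS^\top\dnv=0$, make the ansatz $\dnv=\alpha\rnv+w$ with $w\in T\rSurf$, and solve by one Neumann iteration. Your argument is lighter (no tensor-cross-product algebra) and you also make explicit the step $\dshear=\DGS^{\dagger^\top}\rshear=\rshear+\mathcal{O}(\veps^2)$, which the paper leaves implicit.

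For \eqref{eq:lin_bend}, the paper does not do order counting on $\Hessian_{\director}$ and $(1-\director\cdot\rnv)\rgrad\rnv$ directly; instead it goes back through \eqref{eq:volume_term_rewritten} to the unreduced form $(\DGS^\top\rgrad\director-\rgrad\rnv):\bending$, inserts the expansion of $\director$, and recognizes $\Hessian_{\rnv}$ via the product rule $-\rcovder(\rgrad u^\top\rnv)+\rgrad u^\top\rgrad\rnv=-\Hessian_{\rnv}$. Your direct argument is more economical but uses $\rhesse u_i=\mathcal{O}(\veps)$, which is not among the listed hypotheses; the paper's route needs the same regularity implicitly (e.g.\ when discarding $\rgrad u^\top\rgrad(\rgrad u^\top\rnv)$), so both proofs tacitly rely on the standard small-parameter interpretation in which higher derivatives of $u$ scale with $\veps$ as well.
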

\begin{proof}
	We use $\dnv=\frac{\frac{1}{2}\DGS\tcross\DGS\rnv}{\|\frac{1}{2}\DGS\tcross\DGS\rnv\|}$, \eqref{eq:tcp_id5}, \eqref{eq:tcp_id6}, and Taylor at $\nabla u=0$
	\begin{align*}
		\dnv &= \rnv + \frac{\Proj\tcross\rgrad u \rnv}{\|\frac{1}{2}\Proj\tcross\Proj \rnv\|}-\frac{\frac{1}{2}\Proj\tcross\Proj \rnv}{\|\frac{1}{2}\Proj\tcross\Proj \rnv\|^3}(\Proj\tcross\rgrad u \rnv)\cdot(\frac{1}{2}\Proj\tcross\Proj \rnv)+\mathcal{O}(\veps^2)\\
		&=(\bI\tcross\rgrad u)\rnv-(\rnv\cdot(\bI\tcross\rgrad u\rnv))\rnv+\mathcal{O}(\veps^2).
	\end{align*}
	Further, with \eqref{eq:tcp_id3} we find $\bI\tcross\rgrad u=\div_{\rSurf} u\,\bI - \rgrad u^\top$ and thus \eqref{eq:lin_nv} follows
	\begin{align*}
		\dnv&= \rnv + \div_{\rSurf}u\,\rnv-\rgrad u^\top \rnv - \div_{\rSurf}u\,\rnv +\mathcal{O}(\veps^2)= \rnv -\rgrad u^\top \rnv +\mathcal{O}(\veps^2).
	\end{align*}
	Inserting the definition of the Green-strain tensor directly yields \eqref{eq:lin_membrane}
	\begin{align*}
		\GTS(u) = \frac{1}{2}\big(\Proj\rgrad u+\rgrad u^\top\Proj \big) +\mathcal{O}(\veps^2) = \sym{\rcovder u}+\mathcal{O}(\veps^2).
	\end{align*}
	For \eqref{eq:lin_bend} we use \eqref{eq:volume_term_rewritten} and \eqref{eq:lin_nv}
	\begin{align*}
		(\DGS^\top\rgrad(\dnv)-\rgrad\rnv):\bending&= (\rcovder(\rshear-\rgrad u^\top \rnv) + \rgrad u^\top\rgrad(\rnv-\rgrad u^\top \rnv+\rshear)):\bending+\mathcal{O}(\veps^2)\\
		&=(-\rcovder(\rgrad u^\top\rnv)+\rgrad\rshear +\rgrad u^\top\rgrad\rnv):\bending+\mathcal{O}(\veps^2)\\
		&=-(\Hessian_{\rnv}-\rgrad\rshear):\bending+\mathcal{O}(\veps^2)
	\end{align*}
	proofing the claim.
\end{proof}
By setting $\rshear=0$ we directly obtain the linearizations of the Koiter shell due to the used hierarchical approach. Next, we linearize the difference of angles on the edge terms. As $\rshear$ already appears linearly we can omit it for the derivation.
\begin{lemma}
	\label{lem:linearization_angle_terms}
	Under the assumptions of Lemma~\ref{lem:linearization_volume_terms} there holds
	\begin{align}
		\sphericalangle(\nu_L,\nu_R)\circ\phi-\sphericalangle\left(\rnv_L,\rnv_R\right) = \jmp{(\rgrad u^\top\rnv)_{\rcnv}}+\mathcal{O}(\veps^2).
	\end{align}
\end{lemma}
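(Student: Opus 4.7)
The plan is to Taylor-expand $\sphericalangle$ about the reference configuration, using the first-order expansion $\dnv = \rnv - \rgrad u^\top\rnv + \mathcal{O}(\veps^2)$ from \eqref{eq:lin_nv}. Writing $\alpha^{\mathrm{def}}:=\sphericalangle(\nu_L,\nu_R)\circ\phi$ and $\alpha^{\mathrm{ref}}:=\sphericalangle(\rnv_L,\rnv_R)$, and using that the deformed normals are unit so that $\cos\alpha^{\mathrm{def}} = \dnv[L]\cdot\dnv[R]$, one first obtains
\begin{align*}
\cos\alpha^{\mathrm{def}} - \cos\alpha^{\mathrm{ref}} = -\rnv_R\cdot(\rgrad u^\top\rnv)|_L - \rnv_L\cdot(\rgrad u^\top\rnv)|_R + \mathcal{O}(\veps^2).
\end{align*}
Since $\alpha^{\mathrm{def}} - \alpha^{\mathrm{ref}} = \mathcal{O}(\veps)$, a Taylor expansion of $\arccos$ about $\cos\alpha^{\mathrm{ref}}$ then yields $\alpha^{\mathrm{def}} - \alpha^{\mathrm{ref}} = -(\sin\alpha^{\mathrm{ref}})^{-1}(\cos\alpha^{\mathrm{def}} - \cos\alpha^{\mathrm{ref}}) + \mathcal{O}(\veps^2)$.

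Next I would decompose $\rnv_R$ in the orthonormal frame $(\rnv_L,\rcnv_L,\rtv_E)$ attached to the edge $E$. Since $\rnv_R\perp\rtv_E$ and $\rnv_R\cdot\rnv_L=\cos\alpha^{\mathrm{ref}}$, the outward-orientation convention of the element co-normal together with the choice of $\rtv_E$ fixes $\rnv_R = \cos\alpha^{\mathrm{ref}}\,\rnv_L - \sin\alpha^{\mathrm{ref}}\,\rcnv_L$, and symmetrically $\rnv_L$ in the $R$-frame. Combined with the identity $\rnv\cdot(\rgrad u^\top\rnv) = (\rgrad u\,\rnv)\cdot\rnv = 0$ (because the surface gradient annihilates $\rnv$), each term on the right-hand side of the cosine-difference reduces to a pure co-normal projection,
\begin{align*}
\rnv_R\cdot(\rgrad u^\top\rnv)|_L = -\sin\alpha^{\mathrm{ref}}\,(\rgrad u^\top\rnv|_L)_{\rcnv_L},
\end{align*}
and analogously on the $R$-side with $\rcnv_R$. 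Summing both contributions cancels the $\sin\alpha^{\mathrm{ref}}$ prefactor and, by the element-wise definition of the jump with outward co-normals, reproduces $\jmp{(\rgrad u^\top\rnv)_{\rcnv}}$.

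The main obstacle is the degenerate case $\sin\alpha^{\mathrm{ref}} = 0$ of a locally $\Cone$ reference configuration, where the first-order Taylor expansion of $\arccos$ breaks down. In that regime I would argue directly from $\cos\alpha^{\mathrm{def}} = 1 - \tfrac{1}{2}\|\dnv[L]-\dnv[R]\|^2 + \mathcal{O}(\veps^4)$: since $u$ is continuous on $\rSurf$ and $\rnv$ is continuous at a smooth point, the $\rtv_E$-projection $(\rgrad u^\top\rnv)\cdot\rtv_E = (\rgrad u\,\rtv_E)\cdot\rnv$ is continuous across $E$, so $\dnv[L]-\dnv[R]$ has only a co-normal component, of magnitude $|\jmp{(\rgrad u^\top\rnv)_{\rcnv}}|$. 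Aligning orientations on both sides of $E$ and matching the signed-angle convention of Section~\ref{sec:computational aspects} then glues the two regimes together, which is the principal technical subtlety of the argument.
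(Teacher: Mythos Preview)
Your argument is correct and follows the same strategy as the paper: Taylor-expand $\arccos$ about the reference angle using the linearized normals, then decompose $\rnv_R$ in the $(\rnv_L,\rcnv_L,\rtv_E)$-frame and exploit $\rgrad u\,\rnv=0$ to isolate the co-normal component. The only substantive differences are that the paper carries out the linearization of $\dnv$ via the tensor-cross-product identities of Lemma~\ref{lem:tcp_id} (so that $\div_{\rSurf}u$ terms appear and then cancel) rather than invoking \eqref{eq:lin_nv} directly as you do, and that the paper does not treat the degenerate case $\sin\alpha^{\mathrm{ref}}=0$ at all---its computation simply divides by $\rnv_L\cdot\rcnv_R$ without comment. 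Your explicit handling of that regime, and the remark that a signed-angle convention is needed to fix the sign there, is an addition rather than a deviation.
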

\begin{proof}
	With Taylor and the identity 
	\begin{align*}
		\sqrt{1-(\rnv_L\cdot\rnv_R)^2}&=\cos(\frac{\pi}{2}-\arccos(\rnv_L\cdot\rnv_R))=\cos(\arccos(\rnv_R\cdot\rcnv_L))=\rnv_R\cdot\rcnv_L=\rnv_L\cdot\rcnv_R
	\end{align*}
	we obtain using \eqref{eq:tcp_id5} and \eqref{eq:tcp_id3}
	\begin{align*}
		\sphericalangle(\nu_L,\nu_R)\circ\phi-\sphericalangle\left(\rnv_L,\rnv_R\right) &=-\frac{1}{\rnv_L\cdot\rcnv_R}\Big( (\bI\tcross\rgrad u_L\rnv_L)\cdot\rnv_R+\rnv_L\cdot(\bI\tcross\rgrad u_R\rnv_R) \\
		&- \rnv_L\cdot\rnv_R((\bI \tcross \rgrad u_L\rnv_L)\cdot\rnv_L+(\bI \tcross \rgrad u_R\rnv_R)\cdot\rnv_R)  \Big)+\mathcal{O}(\veps^2)\\\
		&=-\frac{1}{\rnv_L\cdot\rcnv_R}\Big( (\div_{\rSurf}\rgrad u_L+\div_{\rSurf}\rgrad u_R)\rnv_L\cdot\rnv_R-(\rgrad u_L^\top\rnv_L)\cdot\rnv_R\\
		&-(\rgrad u_R^\top\rnv_R)\cdot\rnv_L - \rnv_L\cdot\rnv_R(\div_{\rSurf}\rgrad u_L+\div_{\rSurf}\rgrad u_R)  \Big)+\mathcal{O}(\veps^2).
	\end{align*}
	Using the splitting $\rnv_R=(\rnv_R\cdot\rnv_L)\rnv_L + (\rnv_R\cdot\rcnv_L)\rcnv_L$ (and analogously for $\rnv_L$) yields
	\begin{align*}
		\sphericalangle(\nu_L,\nu_R)\circ\phi-\sphericalangle\left(\rnv_L,\rnv_R\right)&=-\frac{1}{\rnv_L\cdot\rcnv_R}\Big(\frac{\d u_L}{\d\rcnv_L}\cdot\rnv_L(\rnv_R\cdot\rcnv_L)+\frac{\d u_R}{\d\rcnv_R}\cdot\rnv_R(\rnv_L\cdot\rcnv_R) \Big)+\mathcal{O}(\veps^2)\\
		&=\frac{\d u_L}{\d\rcnv_L}\rnv_L+\frac{\d u_R}{\d\rcnv_R}\rnv_R+\mathcal{O}(\veps^2)
	\end{align*}
	finishing the proof.
\end{proof}
Lemma~\ref{lem:linearization_angle_terms} is more general than the results in \cite{NS19} and \cite[Theorem 7.30]{Neun21}, where only plates have been considered or additionally small angles were assumed, respectively.

As a result, the HHJ and TDNNS method for linear Kirchhoff--Love and Reissner--Mindlin shells reads
\begin{align}
	\L^{\mathrm{HHJ}}_{\mathrm{lin}}(u,\bending)&= \frac{t}{2}\int_{\T}\|\sym{\rcovder u}\|_{\MT}^2\dx-\frac{6}{t^3}\int_{\T}\|\bending\|^2_{\MT^{-1}}\,\dx+\int_{\T} \Hessian_{\rnv}:\bending\dx \nonumber\\
	&\quad+ \sum_{E\in\E} \int_E \jmp{(\rgrad u^\top\rnv)_{\rcnv}}\bending_{\rcnv\rcnv}\ds,\label{eq:HHJ_linear_shell}\\
	\L^{\mathrm{TDNNS}}_{\mathrm{lin}}(u,\bending,\rshear) &= \frac{t}{2}\int_{\T}\|\sym{\rcovder u}\|_{\MT}^2\dx + \frac{t\kappa G}{2}\int_{\T}\|\rshear\|_2^2\dx -\frac{6}{t^3}\int_{\T}\|\bending\|^2_{\MT^{-1}}\,\dx\nonumber\\
	&\quad+\int_{\T}(\Hessian_{\rnv}-\rgrad\rshear):\bending\dx+\sum_{E\in\E} \int_E \jmp{(\rgrad u^\top\rnv)_{\rcnv}-\rshear_{\rcnv}}\bending_{\rcnv\rcnv}\ds.\label{eq:TDNNS_linear_shell}
\end{align}
As a next step we assume that the initial configuration of $\rSurf$ is a flat plate in the x-y plane such that $\rnv = \mat{0&0&1}^\top$. Then the membrane part decouples with the bending and shearing terms. By defining the vertical deflection $w:=u\cdot\rnv=u_3$ one directly obtains the HHJ \cite{Hel67,Her67,Joh73,Com89} and TDNNS \cite{PS17} method for plates
\begin{align}
	\L^{\mathrm{HHJ}}_{\mathrm{plate}}(u,\bending)&= -\frac{6}{t^3}\int_{\T}\|\bending\|^2_{\MT^{-1}}\,\dx+\int_{\T} \nabla^2 w:\bending\dx+ \sum_{E\in\E} \int_E \jmp{\frac{\d w}{\d\rcnv}}\bending_{\rcnv\rcnv}\ds,\label{eq:HHJ_plate}\\
	\L^{\mathrm{TDNNS}}_{\mathrm{plate}}(u,\bending,\rshear) &= \frac{t\kappa G}{2}\int_{\T}\|\rshear\|_2^2\dx -\frac{6}{t^3}\int_{\T}\|\bending\|^2_{\MT^{-1}}\,\dx\nonumber\\
	&\quad+\int_{\T}(\nabla^2 w-\nabla\rshear):\bending\dx+\sum_{E\in\E} \int_E \jmp{\frac{\d w}{\d\rcnv}-\rshear_{\rcnv}}\bending_{\rcnv\rcnv}\ds.\label{eq:TDNNS_plate}
\end{align}

\begin{remark}
	By performing the change of variables $\rshear = \rgrad w+\beta$ going from shearing to rotational dofs reveals the classical shear energy term for plates. Note that the change of variables $\rshear = \rgrad w+\beta$ holds for $w\in \Hone[\T]$ and $\rshear,\beta\in\HCurl[\T]$ in the continuous and discrete level exactly as $\nabla w\in\HCurl[\T]$. \eqref{eq:HHJ_plate} and \eqref{eq:TDNNS_plate} show the strong relationship between the HHJ method proposed in the 1960s and the TDNNS method developed in the last 15 years. Further, one directly obtains that in the limit of vanishing thickness, $t\to  0$, the solution of the TDNNS method converges to the solution of the HHJ method. This property is also extended into the case of linear and nonlinear shells, which can be readily checked from \eqref{eq:HHJ_linear_shell}, \eqref{eq:TDNNS_linear_shell} and \eqref{eq:HHJ_nonlinear}, \eqref{eq:TDNNS_nonlinear}, respectively. The hybridization techniques can directly be applied in the linearized cases. Then the resulting systems $(u,\angle)$ as well as $(u,\rshear,\angle)$ are minimization problems again, yielding a symmetric and positive definite stiffness matrix.
\end{remark}

\section{Computational aspects}
\label{sec:computational aspects}
In this section we treat several important computational aspects for implementing the above presented methods.

\subsection{Stable angle computation}
\label{subsec:stable_angle_comp}
For solving the nonlinear shell equations we need to compute the first and second variation of the angle computation. We expect for a triangulation $\T$ approximating a smooth surface $\rSurf$ that $\dnv[L]\cdot\dnv[R]\to 1$ for a sequence of triangulations. As the derivative of $\arccos(x)$, $\arccos^\prime(x)=\frac{-1}{\sqrt{1-x^2}}$, has a singularity at $x=\pm 1$, e.g. formulation \eqref{eq:HHJ_nonlinear} is numerically unstable. Therefore, we rewrite it into an algebraic equivalent formulation by using the averaged normal vector
\begin{align}
	\Av{\nu}\circ\phi = \frac{\dnv[L]+\dnv[R]}{\|\dnv[L]+\dnv[R]\|_2}
\end{align}
leading to (neglecting $\bending_{\rcnv\rcnv}$ for ease of presentation)
\begin{subequations}
	\label{eq:angle_rewritten}
	\begin{align}
		\sum_{E\in\E}\int_{E}\sphericalangle(\nu_L, \nu_R)\circ\phi-\sphericalangle(\rnv_L, \rnv_R)\ds
		&=\sum_{T \in \T}\int_{\partial T}\sphericalangle(\Av{\nu},\nu)\circ\phi-\sphericalangle(\Av{\rnv},\rnv)\ds\label{eq:angle_rewritten_a}\\
		&=\sum_{T\in\T}\int_{\partial T}\sphericalangle(\Av{\rnv}, \rcnv)-\sphericalangle(\Av{\nu}, \mu)\circ\phi\ds.\label{eq:angle_rewritten_b}
	\end{align}
\end{subequations}
\begin{figure}[h]
	\centering
	\begin{tabular}{ccc}
		\includegraphics[width=0.3\textwidth]{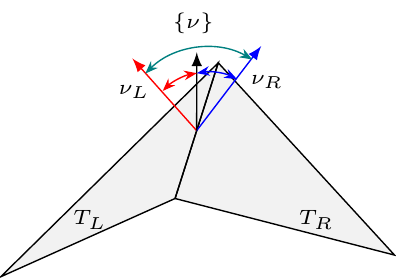} &\includegraphics[width=0.3\textwidth]{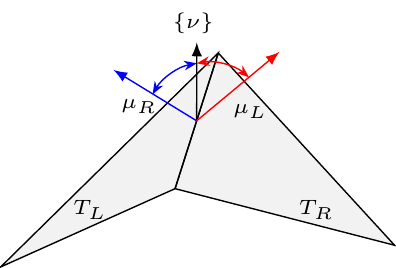} &\includegraphics[width=0.3\textwidth]{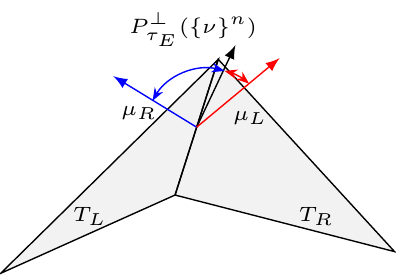} \\
		(a) & (b) & (c)
	\end{tabular}
	\caption{Angle computation. (a): With and without averaged normal vector. (b): With averaged normal vector and co-normal vectors. (c): With averaged normal vector from previous step projected to the plane perpendicular to $\dtv[E]$.}
	\label{fig:angle_computation}
\end{figure}
For the last equality we used the identity $\sphericalangle(\Av{\nu},\nu_T)\circ\phi=\frac{\pi}{2}-\sphericalangle(\Av{\nu},\mu_{T})\circ\phi$, see Figure~\ref{fig:angle_computation} (a) and (b). Now we expect $\Av{\nu}\circ\phi\cdot\dcnv[T]\to 0$, which is numerically stable. Formulation \eqref{eq:angle_rewritten_a} still has the crucial drawback that information of both neighbored elements are required to compute the averaged normal vector. This vector, however, acts only as an auxiliary object to compute the full angle. We could use every vector $v\in\R^3$ lying in between $\dcnv[L]$ and $\dcnv[R]$, i.e., $v\in\mathrm{conv}\{\dcnv[L],\dcnv[R]\}$, cf. Figure~\ref{fig:angle_computation} (c), boundary conditions are discussed in Section~\ref{subsec:bc}. Therefore, as proposed in \cite{NS19} we use the orthogonal projector $\bm{P}_{\dtv[E]}^\perp:=\bI-\dtv[E]\otimes \dtv[E]$, and the nonlinear operator
\begin{align}
	\label{eq:projection_av_nv}
	P^\perp_{\dtv[E]}(\Av{\nu}^n):=\frac{1}{\|\bm{P}_{\dtv[E]}^\perp\Av{\nu}^n\|}\bm{P}_{\dtv[E]}^\perp\Av{\nu}^n.
\end{align}
Here $\Av{\nu}^n$ denotes the averaged normal vector from e.g. the previously computed solution. The projection is needed to guarantee that the auxiliary vector lies in the plane perpendicular to $\dtv[E]$ to obtain the correct angle. Averaging after each completed load-step has proven its worth as the elements in practice do not rotate such that $\Av{\nu}^n$ does not lie in between the deformed co-normal vectors anymore. The averaging procedure can also be performed after each Newton step making the method more robust at the cost of solving several averaging problems. As the averaging is done edge-patch wise, however, it is cheap. Note, that the nonlinear projection \eqref{eq:projection_av_nv} depends on the unknown deformation. Angle term \eqref{eq:angle_rewritten} reads with the projection operator

\begin{align}
	\sum_{E\in\E}\int_{E}\sphericalangle(\nu_L, \nu_R)\circ\phi-\sphericalangle(\rnv_L, \rnv_R)\ds=-\sum_{T\in\T}\int_{\partial T}\sphericalangle(P^\perp_{\dtv_E}(\Av{\nu}^n), \mu)\circ\phi-\sphericalangle(\Av{\rnv}, \rcnv)\ds.\label{eq:stable_angle}
\end{align}

\subsection{Physical meaning of hybridization field}
\label{sec:physical_meaning_hyb}
Taking the variation of the edge terms from \eqref{eq:HHJ_hyb_nonlinear} with respect to $\bending$ yields
\begin{align*}
	\sum_{E\in\E}\int_E(\sphericalangle(\nu_L,\nu_R)\circ\phi-\sphericalangle(\rnv_L,\rnv_R))\mean{(\delta\bending)_{\rcnv\rcnv}}+ \angle_{\rcnv}\jmp{(\delta\bending)_{\rcnv\rcnv}}\ds=0 \,\,\, \forall  \delta\bending\in\spaceB^{\mathrm{dc}}(\T).
\end{align*}
Using the test function $(\delta\bending)_{\rcnv\rcnv}|_{T_L}=1$ and $(\delta\bending)_{\rcnv\rcnv}|_{T_R}=-1$ for edge $E=T_L\cap T_R$ and zero elsewhere, we deduce that $\angle=0$, i.e. $\angle$ is a virtual Lagrange parameter. For the stable angle computation
\begin{align*}
	\sum_{T\in\T}\int_{\partial T}\big(\sphericalangle(P^\perp_{\dtv_E}(\Av{\nu}^n), \mu)\circ\phi-\sphericalangle(\Av{\rnv}, \rcnv)+\angle_{\rcnv}\big)(\delta\bending)_{\rcnv\rcnv}\ds=0\quad \forall \delta\bending\in\spaceB^{\mathrm{dc}}(\T)
\end{align*}
we obtain $\angle_{\rcnv_E}=\frac{1}{2}\jmp{\sphericalangle(P^\perp_{\dtv_E}(\Av{\nu}^n), \mu)\circ\phi}$. It measures how much $\Av{\nu}^n$ has to be rotated to obtain $\Av{\nu}\circ \phi$. When performing the averaging procedure at every Newton step $\angle$ becomes again a virtual Lagrange multiplier after convergence.

\subsection{Moore--Penrose pseudo inverse}
\label{subsec:pseudo_inverse}
The Moore--Penrose pseudo inverse of a $3\times3$ rank 2 matrix $A$ with known kernel vector $v$, like the deformation gradient $\DGS$ with the normal vector $\rnv$ as kernel, can be computed in terms of a Tikhonov regularization
\begin{align}
	\bA^\dagger = (\bA^\top \bA + v\otimes v)^{-1} \bA^\top.
\end{align}
With it, the director $\director=\dnv+\DGS^{\dagger^\top}\rshear$ can be computed.
\subsection{Boundary conditions}
\label{subsec:bc}
To solve the shell equations numerically we need to specify appropriate boundary conditions. Due to the appearance of the less common HHJ, $\HCurl$-conforming N\'ed\'elec, and co-normal continuous hybridization space, we discuss in this section the different types of boundary conditions arising in shell problems.
\begin{table}[H]
	\begin{tabular}{c|ccc|cc}
		& $u$ & $\rshear$ & $\bending$ & $\bending^{\mathrm{dc}}$ & $\angle$\\
		\hline
		clamped          & D   & D & N & - & D\\
		free             & N   & N & D & - & N\\
		simply supported & D   & N & D & - & N\\
		symmetry         & $u_{\rcnv}=0$ & N & N & - & D\\
		rigid diaphragm  & $u_{\rtv_E}=u_{\rnv}=0$ & D & D & - & N
	\end{tabular}\hfill
	\begin{tabular}{c|cccc}
		field & $u$ & $\rshear$ & $\bending$  & $\angle$\\
		\hline
		Dirichlet & $u$   & $\rshear_{\rtv}$ & $\bending_{\rcnv\rcnv}$  &$\angle_{\rcnv}$
	\end{tabular}
	\caption{Left: Requirement on spaces. D and N correspond to essential zero Dirichlet and homogeneous natural Neumann conditions, respectively. For symmetry the co-normal component of $u$ is fixed whereas for rigid diaphragm only the co-normal component of $u$ is free. Right: Dirichlet components for the fields.}
	\label{tab:boundary_conditions}
\end{table}

In Table~\ref{tab:boundary_conditions} (left) the requirements on the different spaces are listed, where the first three columns correspond to the Naghdi and (by neglecting $\rshear$) Koiter shell without hybridization and the last two columns replace the third one when hybridization is considered. We note that the essential and natural boundary conditions swap between $\bending$ and $\angle$ during hybridization. With $\angle$ a moment can be prescribed at boundaries as natural boundary condition \cite{NS19}. The different Dirichlet boundary conditions for the used fields are depicted in Table~\ref{tab:boundary_conditions} (right).

On clamped boundaries the averaged normal vector $\Av{\nu}^n$ has to coincide with the initial normal vector $\rnv$. Otherwise, the boundary behaves like a simply supported one. For symmetry boundary conditions only its co-normal component has to be fixed, i.e., $\Av{\nu}^n\cdot\rcnv=0$, rotations in the $\rnv$-$\rtv_E$-plane are allowed. These conditions have to be incorporated during the averaging procedure. Alternatively, it is possible to introduce $\Av{\nu}^n$ as additional unknown in a vector-valued skeleton space on $\E$ including the appropriate boundary conditions such that the averaging is performed automatically in every Newton iteration. 

\subsection{Stiffness matrix}
\label{subsec:stiffness_matrix}
We stated the shell methods in terms of Lagrange functionals for a compact and clear presentation. For actually solving these equations the first and second variations of the functionals are needed. Several modern finite element software allow for automatic (symbolic) differentiation, such that this tedious work is done internally. Nevertheless, we present the variations by hand for the Koiter shell model \eqref{eq:HHJ_nonlinear} together with the stable angle computation \eqref{eq:stable_angle} in Appendix~\ref{sec:first_second_variations}. If the averaging procedure of the normal vector is done after each Newton iteration the variation terms simplify significantly. The additional hybridization and shear field arising in the other methods can be computed with less effort. 

\subsection{Extension to nonlinear material laws}
\label{subsec:nonlinear_materials}
If the material law is not invertible two-field formulation \eqref{eq:HHJ_nonlinear} cannot be used. The three-field Hu--Washizu method \eqref{eq:HHJ_bend_three_field}, however, does not require the inversion as the material law is directly applied to $\curv^{\mathrm{diff}}$. We can use discontinuous piece-wise polynomials for the discretization of $\curv^{\mathrm{diff}}$ as long as the space is a subspace of the finite element space used for the bending moment tensor $\bending$. Thus, the additional field can be statically condensed out at element level not increasing the total system size of the final stiffness matrix. The idea of using a lifting from distributional to more regular quantities to apply nonlinear, not-invertible operators has already been successfully used e.g. in \cite{NPS21} for the TDNNS method for nonlinear elasticity.

\section{Structures with kinks and branched shells}
\label{sec:kinks_branched_shells}
In the previous sections we derived the models starting from a smooth shell. The question of applicability of these methods in the case of kinked or branched shells arise, where the structure is non-smooth.

We focus in this section on Koiter shells but emphasize that the results apply to Naghdi shells as well. For ease of presentation we consider the normal vectors for the angle computation \eqref{eq:angle_rewritten_a}.

\begin{figure}[h]
	\centering
	\includegraphics[width=0.2\textwidth]{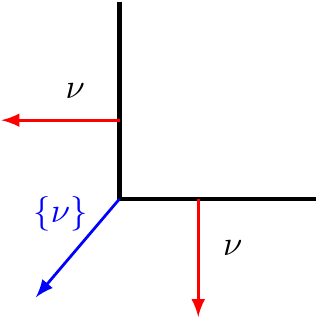}\hspace*{2cm}
	\includegraphics[width=0.2\textwidth]{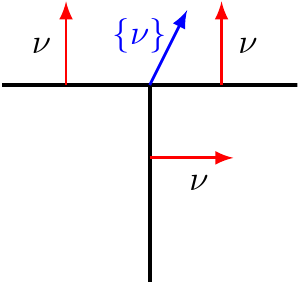}
	\caption{Left: Kinked shell. Right: Branched shell.}
	\label{fig:kink_branched_structure}
\end{figure}

\subsection{Kinked structures}
We start with kinked structures as depicted in Figure~\ref{fig:kink_branched_structure} (left). We recall that the co-normal--co-normal component of the bending stress tensor is continuous, $\jump{\bending_{\rcnv\rcnv}}=0$. Therefore, the moments entering the kink exactly go into the neighbored element. The co-normal--co-normal component ``does not see'' the junction such that the moments are preserved, which is part of the interface conditions. Additionally, if we consider the variation of $\bending$ in \eqref{eq:HHJ_nonlinear} on a single edge, we deduce from
\begin{align}
	\label{eq:kink_angle_preservation}
	\int_E(\sphericalangle(\nu_L,\nu_R)\circ\phi-\sphericalangle(\rnv_L,\rnv_R))(\delta\bending)_{\rcnv\rcnv}\,ds=0,\qquad \forall \delta\bending\in\spaceB(\T)
\end{align}
that in a weak sense the initial angle of the kink gets preserved, $\sphericalangle(\nu_L,\nu_R)\circ\phi=\sphericalangle(\rnv_L,\rnv_R)$. Thus, the proposed methods can be applied to kinked structures without the necessity of adaptions at the kinks, see Section~\ref{subsec:t_cant} for a numerical example. For Naghdi shells condition \eqref{eq:kink_angle_preservation} changes such that the angle of the directors $\director$ is preserved.

\subsection{Branched shells}
For branched shells, where edges are shared by more than two elements, the situation is more involved and the question of how to define the boundary term arises. Therefore, we first define the averaged normal vector $\Av{\nu}\circ\phi=\frac{\sum_{i=1}^N\dnv[i]}{\|\sum_{i=1}^N\dnv[i]\|}$, where $N$ denotes the number of elements connected to the edge and $\dnv[i]$ corresponding normal vectors of the branches, whose orientations are a priori fixed (e.g. during mesh generation), cf. Figure~\ref{fig:kink_branched_structure} (right). In this way also the possibility of cancellation, $\sum_{i=1}^N\dnv[i]=0$, can easily be avoided by changing the orientation of one branch. Motivated by \eqref{eq:kink_angle_preservation} a first approach would be
\begin{align}
	\label{eq:branched_shells_first}
	\sum_{i=1}^N\int_{\d T_i\cap E}(\sphericalangle(\nu_i,\Av{\nu})\circ\phi-\sphericalangle(\rnv_i,\Av{\rnv}))\bending_{\rcnv\rcnv}\ds.
\end{align}
Taking again the variation of $\bending$ would yield that the sum of the changed angles is zero, but not each angle itself. To force that each integral in the sum of \eqref{eq:branched_shells_first} is zero we make $\bending$ discontinuous in terms of hybridization, cf. Section~\ref{subsec:hybridization}. Then we obtain that each angle is (weakly) preserved. 

The Lagrange multiplier $\angle$, which reinforces the co-normal--co-normal continuity of $\bending$ for non-branched shells, now guarantees that the sum of moment inflows is equal to the sum of moment outflows at the edge
\begin{align}
	\sum_{T\in\T : E\subset T}\int_{\d T\cap E}(\delta\angle)_{\rcnv}\bending_{\rcnv\rcnv}\ds=0\qquad \forall\delta\angle\in\spaceH(\T).
\end{align}
This equilibrium of moments leads to a physically correct behavior of shells. As a result hybridization is recommended and needed for branched shells. Nevertheless, no further action is required to handle these types of structures.

We emphasize that the linearization result Lemma~\ref{lem:linearization_angle_terms} holds also for kinked and (in the hybridized case) branched shells. Thus, the linearized methods \eqref{eq:HHJ_linear_shell} and \eqref{eq:TDNNS_linear_shell} can also directly be used for these structures.

\section{Finite elements}
\label{sec:finite_elements}
In this section we state the finite element spaces needed  to approximate and solve the shell problems appropriately and discuss a procedure to alleviate membrane locking. 

\subsection{Definition of finite element spaces}
For the finite element computations we make use of the unit reference triangle $\refT$ and reference edge $\refE$ on which we define with $\Pol^k(\refT)$ and $\Pol^k(\refE)$ the set of polynomials up to degree $k$. Let $\re{T}\in\T$ denote a possibly curved triangle in $\R^3$ which is diffeomorphic to $\refT$ via $\refPhi_{\re{T}}:\refT\to\re{T}$. We say that $\re{T}$ is curved of degree $p$ if $\refPhi_{\re{T}}\in\Pol^p(\refT,\R^3)$.

The Lagrangian finite elements $\spaceU_h^k\subset\spaceU(\T)$ on $\T$ are given by, see e.g. \cite{Zaglmayr06,Bath14,ZT20},
\begin{align}
	\label{eq:fes_U}
	\spaceU^k_h := \{u\in\spaceU(\T)\,\vert \forall \re{T}\in\T,\, u\circ\refPhi_{\re{T}} =\re{u} \text{ for a } \re{u}\in\Pol^k(\refT,\R^3) \}.
\end{align}
Next, we define the Hellan--Herrmann--Johnson finite element space \cite{PS11,Com89} $\spaceB_h^k\subset\spaceB(\T)$
\begin{align}
	\label{eq:fes_B}
	\spaceB^k_h := \{\bending\in\spaceB(\T)\,\vert \forall \re{T}\in\T,\, \bending\circ\refPhi_{\re{T}} =\frac{1}{J^2}\DG\re{\bending}\DG^\top \text{ for a } \re{\bending}\in\Pol^k(\refT,\R^{2\times 2}_{\mathrm{sym}}) \},
\end{align}
where $\DG=\nabla_{\tilde{x}}\refPhi_{\re{T}}\in\R^{3\times 2}$ is the classical push forward and $J=\sqrt{\det (\DG^\top\DG)}$ the surface determinant. We emphasize that $\bending(\refPhi_{\re{T}}(\tilde{p}))=(\frac{1}{J^2}\DG\re{\bending}\DG^\top)(\tilde{p})\in T_{\refPhi_{\re{T}}(\tilde{p})}\re{T}\otimes T_{\refPhi_{\re{T}}(\tilde{p})}\re{T}$ lies in the tangent space of $\re{T}$ due to the push forward $\DG$. Further, $\bending$ remains symmetric. Moreover, with the scaling by the determinant the co-normal--co-normal continuity is preserved as $\frac{1}{J}\DG$ is well known as the (contravariant) Piola transformation used to preserve the normal continuity e.g. for $\HDiv$-conforming Raviart--Thomas or Brezzi--Douglas--Marini elements \cite{BDM85,RT77}. The Piola transformation is also used for the hybridization space $\spaceH_h^k\subset\spaceH(\T)$
\begin{align}
	\label{eq:fes_H}
	\spaceH^k_h := \{\angle\in\spaceH(\T)\,\vert \forall \re{E}\in\E,\, \angle\circ\refPhi_{\re{E}} =\frac{1}{J}\DG(\re{f}\tilde{\mu}_E) \text{ for a } \re{f}\in\Pol^k(\refE) \},
\end{align}
where $\tilde{\mu}_E$ is the a-priori fixed edge normal vector, see Section~\ref{subsec:discrete_def_surfaces}.

For the shearing space we use $\HCurl$-conforming N\'ed\'elec elements \cite{Ned1980} mapped onto the surface. Due to their tangential continuity the covariant transformation is considered
\begin{align}
	\label{eq:fes_S}
	\spaceS^k_h = \{\beta\in\spaceS(\T)\,\vert \forall \re{T}\in\T,\, \beta\circ\refPhi_{\re{T}} =\DG^{\dagger^\top}\re{\beta} \text{ for a } \re{\beta}\in\Pol^k(\refT,\R^2) \}.
\end{align}
Note, that $\DG^{\dagger^\top}=\DG(\DG^\top\DG)^{-1}\in\R^{3\times 2}$ maps into the tangent space of $\re{T}$.

\begin{figure}[h!]
	\centering
	\includegraphics[width=0.155\textwidth]{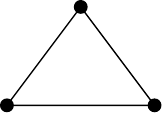}
	\includegraphics[width=0.155\textwidth]{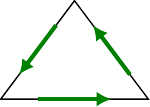}
	\includegraphics[width=0.155\textwidth]{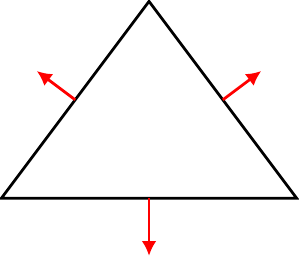}
	\includegraphics[width=0.155\textwidth]{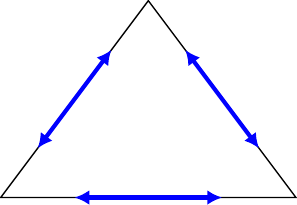}
	\includegraphics[width=0.155\textwidth]{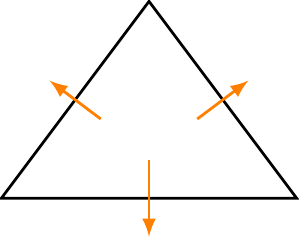}
	\includegraphics[width=0.155\textwidth]{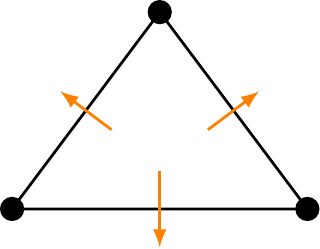}
	
	\includegraphics[width=0.155\textwidth]{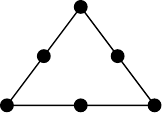}
	\includegraphics[width=0.155\textwidth]{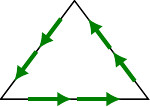}
	\includegraphics[width=0.155\textwidth]{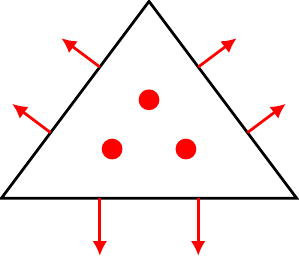}
	\includegraphics[width=0.155\textwidth]{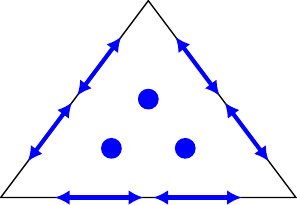}
	\includegraphics[width=0.155\textwidth]{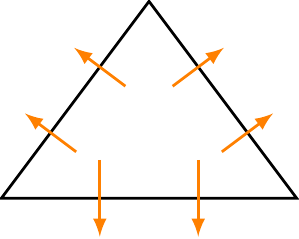}
	\includegraphics[width=0.155\textwidth]{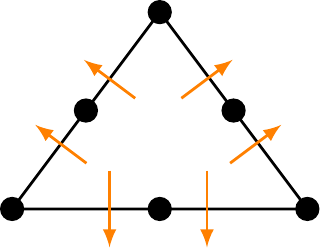}
	
	\caption{Schematic dofs of used finite elements from left to right: Lagrangian, N\'ed\'elec, HHJ, Regge, hybridization, hybridized Koiter shell element. Top: lowest order, bottom: one polynomial order higher.}
	\label{fig:finite_elements}
\end{figure}

The Regge finite element space used to alleviate membrane locking, cf. Section~\ref{sec:membrane_locking} consists, in comparison with HHJ finite elements, of tangential--tangential instead of co-normal--co-normal continuous functions \cite{Li18,Neun21}. In the two-dimensional setting this corresponds to a rotation of $90\degree$ of the shape functions. Further, to preserve the tangential--tangential continuity a double covariant mapping is used
\begin{align}
	&\Regge(\T):=\{ \bm{\veps}\in\Cinf[\T,T\rSurf\otimes T\rSurf]\,\vert\, \bm{\veps}^\top=\bm{\veps},\,\jump[E]{\bm{\veps}_{\rtv\rtv}}=0\forall E\in\E^{\mathrm{int}}\}, \label{eq:space_Regge}\\
	&\Regge_h^k:=\{ \bm{\veps}\in \Regge(\T)\,\vert\, \forall\re{T}\in\T,\,\bm{\veps}\circ\refPhi_{\re{T}}=\DG^{\dagger^\top}\re{\bm{\veps}}\DG^\dagger \text{ for a } \hat{\bm{\veps}}\in\Pol^k(\refT,\R^{2\times 2}_{\mathrm{sym}}) \}.\label{eq:fes_regge}
\end{align}

In Figure~\ref{fig:finite_elements}  dofs of the presented finite elements are shown schematically and in the last column the hybridized Koiter shell element dofs are depicted. For the hybridized Naghdi shell element the shearing N\'ed\'elec dofs have to be added.

The finite elements can be defined for quadrilateral elements in the same manner using the appropriate transformations from the reference quadrilateral. As the polynomial spaces on the reference quadrilateral differ between the used elements we refer to the literature \cite{BBF13,Zaglmayr06,PS12,Neun21} for details. We emphasize that for properly designed shape functions meshes mixing both triangular and quadrilateral elements are straight-forward.

\subsection{Membrane locking and Regge interpolation}
\label{sec:membrane_locking}
It is well-known that for shells so-called membrane and shear locking might occur if the thickness parameter $t$ becomes small and the deformation falls in the bending dominated regime \cite{CB98}. For Koiter shells no shear locking occurs as the shearing  unknowns have been eliminated by the Kirchhoff--Love hypothesis. The proposed method for the nonlinear Naghdi shells is due to the hierarchical approach, and as will be demonstrated in Section~\ref{sec:numerics} by means of several numerical examples, also free of shear locking. Both of them, however, still suffer from membrane locking if displacement functions of at least quadratic polynomial order are considered. Using high enough polynomial ansatz functions is known to mitigate this problem, see e.g. \cite{choi98}. Nevertheless, we will use the Regge interpolation operator as proposed in \cite{NS21} to obtain a locking free method independent of polynomial degree. Therefore, the piece-wise canonical interpolant $\RegInt$ into the Regge finite elements \eqref{eq:fes_regge} is inserted into the membrane energy term
\begin{align}
	\label{eq:membrane_energy_regge}
	E_{\mathrm{mem}}^{\mathrm{Regge}}(u) = \frac{t}{2}\int_{\T}\|\RegInt \GTS(u)\|^2_{\MT}\dx
\end{align}
reducing the implicitly given kernel constraints, where the polynomial degree $k$ is chosen one order less than for the displacement field $u$. In Section~\ref{sec:numerics} numerical examples verify that with this adaption the methods are free of membrane locking.
The canonical Regge interpolant $\RegInt[k]:\Regge(\T)\to \Regge_h^k$ on a $\re{T}\in\T$ is given by the following equations
\begin{align*}
	&\int_{\re{E}} (\RegInt[k]\bm{\veps})_{\rtv\rtv}\,q\ds = \int_{\re{E}}\bm{\veps}_{\rtv\rtv}\,q\ds&&\forall q\circ\refPhi_{\re{E}}=\frac{1}{J_{\mathrm{bnd}}}\re{q},\,\re{q}\in\Pol^k(\refE),\,\re{E}\in\E\cap\d\re{T},\\
	&\int_{\re{T}} (\RegInt[k]\bm{\veps}):Q\dx = \int_{\re{T}}\bm{\veps}:Q\dx&&\forall Q\circ\refPhi_{\re{T}}=\frac{1}{J}\DG\re{Q}\DG^\top,\,\re{Q} \in\Pol^{k-1}(\refT,\R^{2\times 2}_{\mathrm{sym}}),
\end{align*}
where $\refPhi_{\re{E}}:\tilde{E}\to\re{E}$ maps the reference edge to the physical one and $J_{\mathrm{bnd}}:=\|\DG\tilde{\tau}\|_2$ is the corresponding edge measure.

We emphasize that the Green strain tensor $\GTS$ is an element of $\Regge(\T)$ as the discretized displacement $u\in\spaceU_h$ is continuous and thus its gradient $\DGS$ tangential continuous. We refer to \cite{NS21,Neun21} for details on the implementation and note that this procedure is related to the tying point approach of MITC (mixed interpolation of tensorial components) shell elements \cite{CB11}. It can be applied to quadrilaterals (and also for mixed meshes) in the same manner as for triangles.

\subsection{Relation to Morley triangle}
In the lowest order case using the hybridized Koiter shell model, i.e., $u\in\spaceU_h^1$, $\bending\in\spaceB_h^{\mathrm{dc},0}$, $\angle\in\spaceH_h^0$, after static condensation and eliminating the bending moment dofs at element level, the remaining dofs are equivalent to those of the Morley triangle \cite{Mor71}, cf. Figure~\ref{fig:finite_elements} (top right). There the displacements are placed at the vertices and the normal derivative of $u$, $\frac{\d u}{\d\rcnv}$, is continuous across elements at the edge mid-points. The fact that the hybridization unknown $\angle$ was shown to have the physical meaning of the normal derivative of the displacement $\angle\hat{=}\frac{\d u}{\d \rcnv}$ in the case of plates \cite{Com89} underlines this strong relationship.

\section{Numerical examples}
\label{sec:numerics}
The presented methods are implemented in the open source finite element library NGSolve\footnote{www.ngsolve.org} \cite{Sch97,Sch14}. Elements are curved isoparametrically according to the used polynomial order for the displacements. We use e.g. the abbreviation $p1$ HHJ to indicate that the Hellan--Herrmann--Johnson method with $(u_h,\bending_h,\angle_h)\in \spaceU_h^1\times \spaceB^{\mathrm{dc},0}_h\times \spaceH_h^0$  is considered for the (nonlinear) Koiter shell. $p2$ TDNNS will denote the TDNNS method with $(u_h,\bending_h,\angle_h,\rshear_h)\in \spaceU_h^2\times \spaceB^{\mathrm{dc},1}_h\times \spaceH_h^1\times \spaceS_h^1$ for the (nonlinear) Naghdi shell and elements are curved quadratically. For all benchmarks the Regge interpolant of one polynomial order less than the displacement is used for the membrane energy to alleviate membrane locking. Due to the hierarchical approach no shear locking occurs. For all examples the shear correction factor is fixed to $\kappa=5/6$.

If nonlinear benchmarks are considered Newton's method is applied with stopping criterion $\sqrt{|(r_n,A^{-1}_nr_n)|}<1\times 10^{-5}$, where $r_n$ denotes the residuum and $A_n$ the linearization of the $n$-th iteration. Uniform load-steps in $[0,1]$ with possible damping are performed.

\subsection{Cantilever subjected to end shear force}
\label{subsec:cant_shear_force}
\begin{figure}
	\centering
	\includegraphics[width=0.55\textwidth]{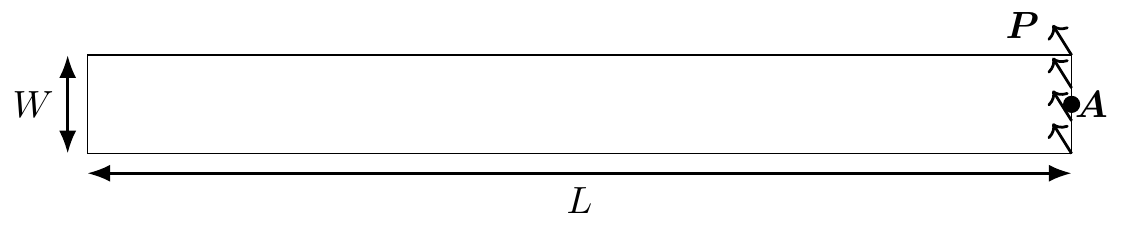}\hspace*{0.5cm}
	\includegraphics[width=0.3\textwidth]{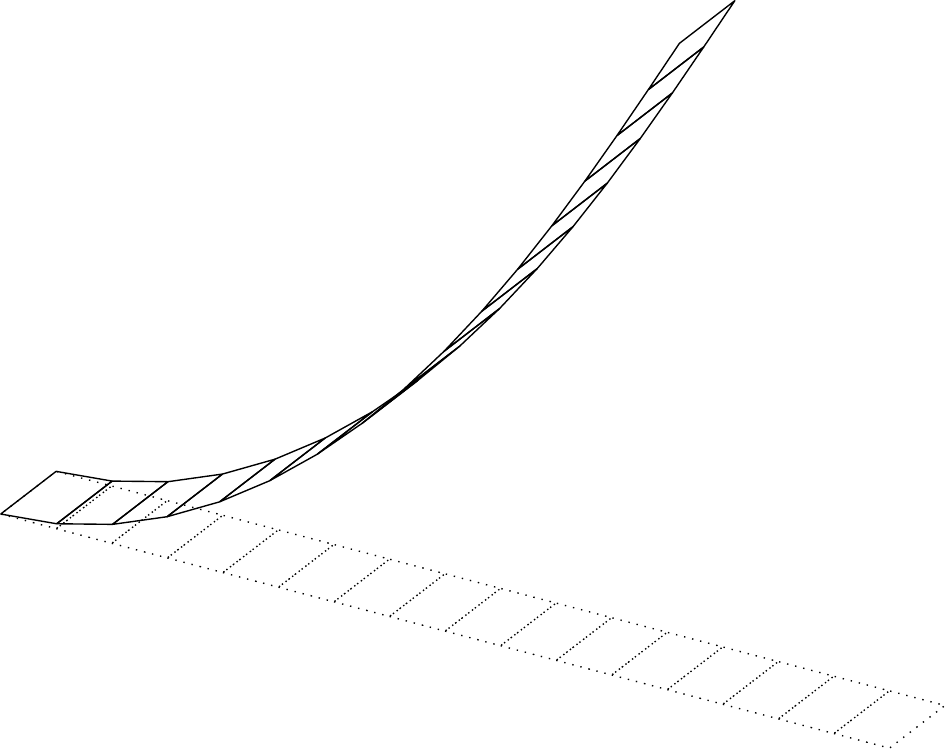}
	\caption{Left: Geometry of cantilever subjected to end shear force benchmark. Right: Initial (dotted) and deformed (solid) mesh.}
	\label{fig:res_cant_shear_mesh}
\end{figure}

An end shear force $P$ on the right boundary is applied to a cantilever, which is fixed on the left. The material and geometrical properties are $\YoungsModulus=1.2\times10^6$, $\PoissonRatio=0$, $L=10$, $W=1$, $t=0.1$, and $P_{\mathrm{max}}=4$, see Figure \ref{fig:res_cant_shear_mesh} (left). We use a structured $16\times 1$ quadrilateral grid, the lowest order $p1$ TDNNS method for nonlinear Naghdi shells, and compare it with the reference values from \cite{SLL2004} at point $A$. In Figure \ref{fig:res_cant_shear_mesh} (right) the initial and deformed mesh are displayed. The results shown in Figure \ref{fig:res_cant_shear} are in common with the reference values, the lines are overlapping. In \cite[Section 3.1]{NS19} this benchmark has been performed for the HHJ method.

\begin{figure}
	\centering
	\includegraphics[width=0.45\textwidth]{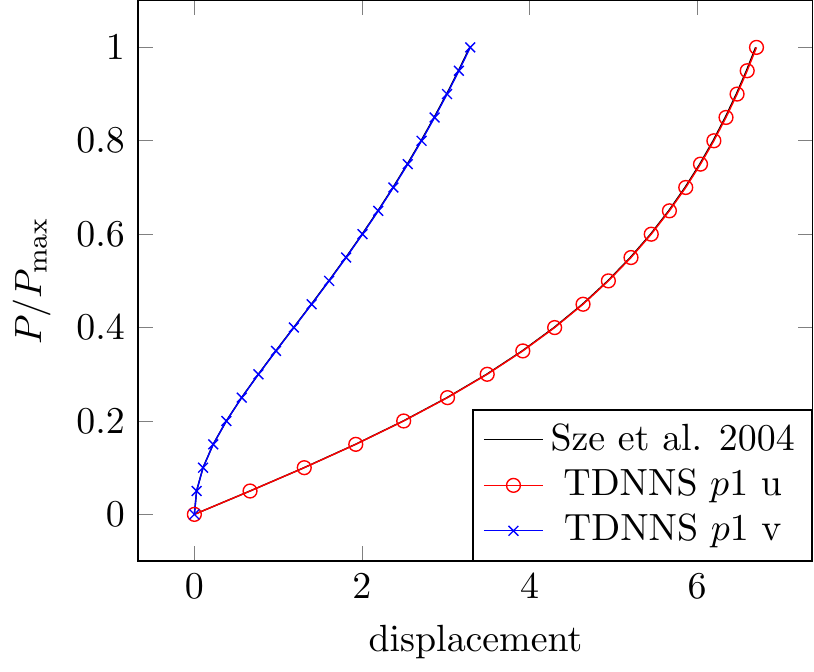}
	\caption{Horizontal and vertical load-deflection for cantilever subjected to end shear force with $16\times 1$ grid.}
	\label{fig:res_cant_shear}
\end{figure}

\subsection{Cantilever subjected to end moment}
\label{subsec:cant_end_mom}
\begin{figure}
	\centering
	\includegraphics[width=0.5\textwidth]{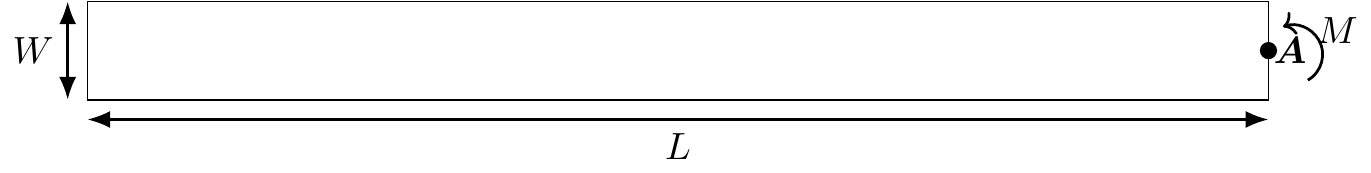}\hspace*{0.5cm}
	\includegraphics[width=0.33\textwidth]{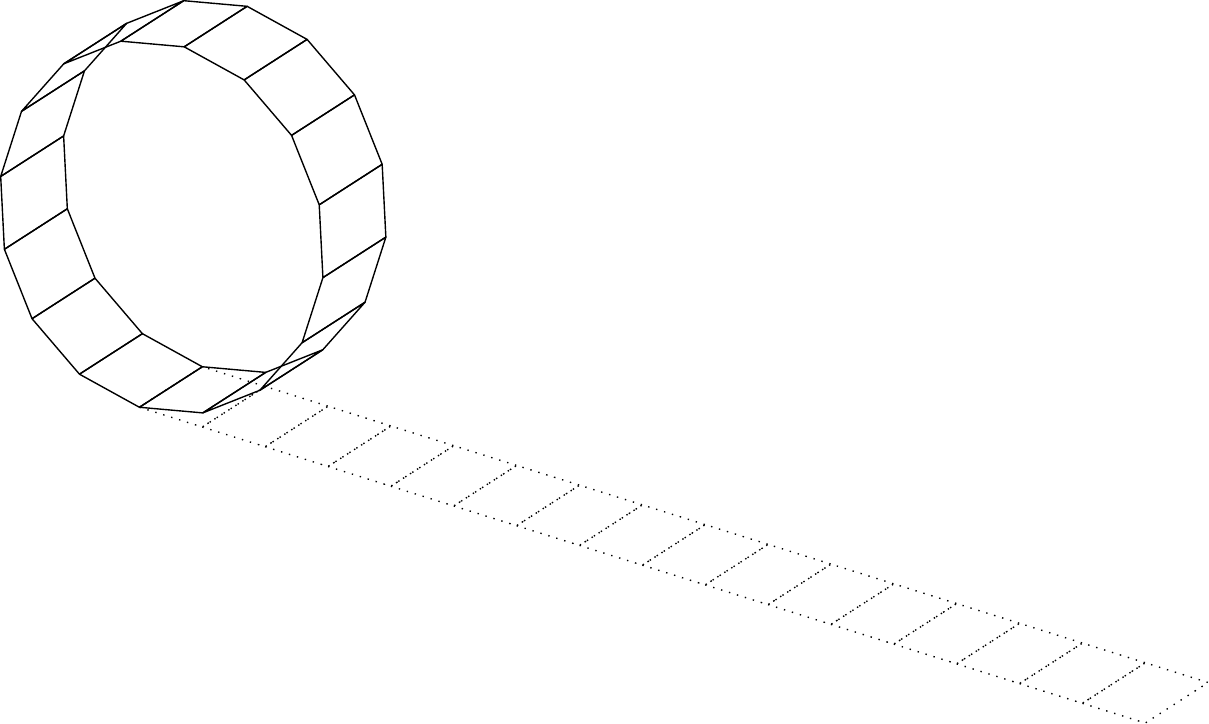}
	\caption{Left: Geometry of cantilever subjected to end moment benchmark. Right: Initial (dotted) and deformed (solid) mesh.}
	\label{fig:res_cant_bend_mesh}
\end{figure}
A cantilever is clamped on the left side and a moment $M$ is applied on the right. The material and geometrical properties are $\YoungsModulus=1.2\times10^6$, $\PoissonRatio=0$, $L=12$, $W=1$, $t=0.1$, and $M_{\mathrm{max}}=50\pi/3$, see Figure \ref{fig:res_cant_bend_mesh} (left). We use a structured $16\times 1$ quadrilateral mesh with TDNNS $p1$ and $p2$ method, the initial and final mesh can be found in Figure \ref{fig:res_cant_bend_mesh} (right). The results are displayed in Figure \ref{fig:res_cant_bend}, which are compared with the analytic solution of this benchmark, see e.g. \cite{SLL2004}. Again, the curves overlap showing that the method is free of locking and resembles pure bending. For the results for the HHJ method we refer to \cite[Section 3.2]{NS19}.

\begin{figure}
	\centering
	\includegraphics[width=0.45\textwidth]{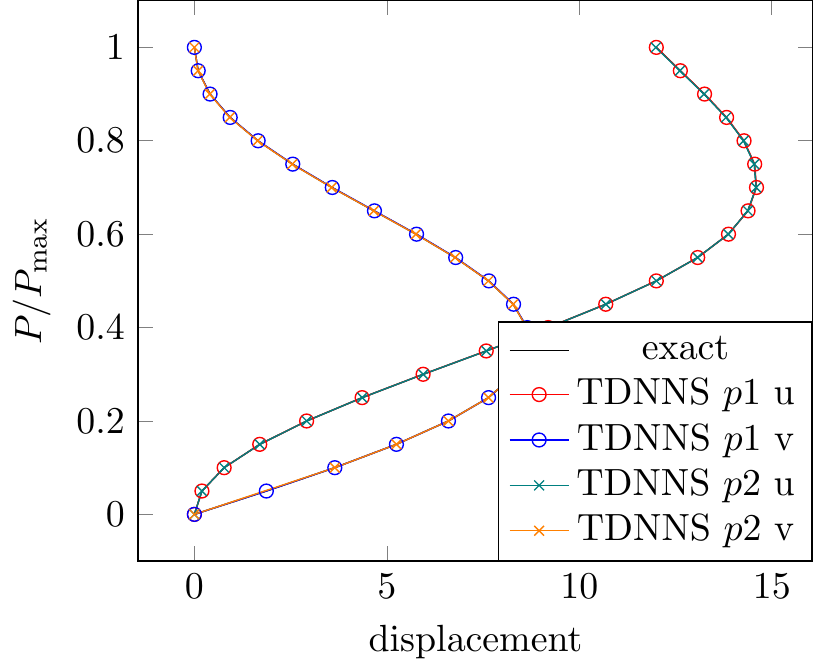}
	\caption{Horizontal and vertical load-deflection for cantilever subjected to end moment with $16\times 1$.}
	\label{fig:res_cant_bend}
\end{figure}

\subsection{Slit Annular plate}
\label{subsec:slit_annular}
We consider the slit annular plate, where one of the slit parts are clamped whereas on the other a vertical force $P$ is applied. The other two boundaries are left free. The material and geometrical properties are $\YoungsModulus=2.1\times10^8$, $\PoissonRatio=0$, $R_i=6$, $R_o=10$, $t=0.03$, and $P_{\mathrm{max}}=4.034$, see Figure \ref{fig:res_slit_ann_mesh} (left). We use unstructured triangular meshes, the initial and deformed mesh for mesh-size $h=2$ can be seen in Figure~\ref{fig:res_slit_ann_mesh} (right). Due to the large deformation we damp the first two Newton iterations with a factor $1/3$ and $2/3$, respectively. As shown in Figure~\ref{fig:res_slit_annular_plate} already the lowest order HHJ and TDNNS methods $p1$ on a coarse grid $h=2$ consisting of $98$ triangles lead to a good qualitative behavior. Using finer grids as $h=0.25$, corresponding to $6992$ triangles, resembles the reference values depicted from \cite{SLL2004}. Due to the small thickness the relative difference between the nonlinear HHJ and TDNNS method is in the range of $0.5\times 10^{-5}$.
\begin{figure}[h!]
	\centering
	\includegraphics[width=0.49\textwidth]{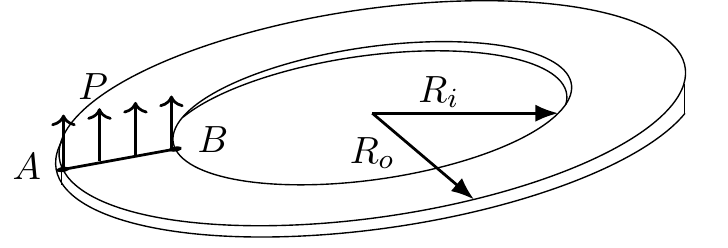}\hspace*{0.5cm}
	\includegraphics[width=0.35\textwidth]{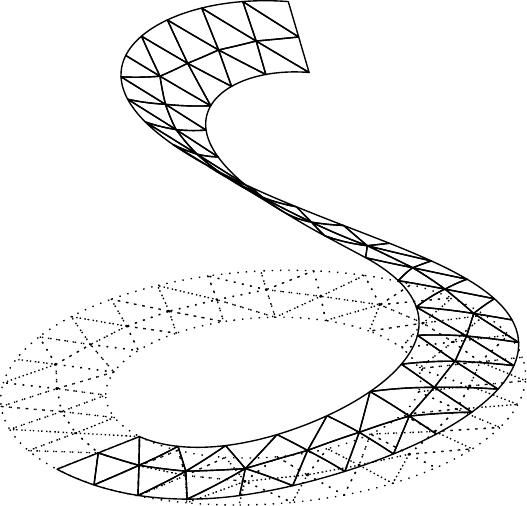}
	\caption{Left: Geometry, force, and points of interest of slit annular plate. Right: Initial (dotted) and deformed (solid) mesh.}
	\label{fig:res_slit_ann_mesh}
\end{figure}

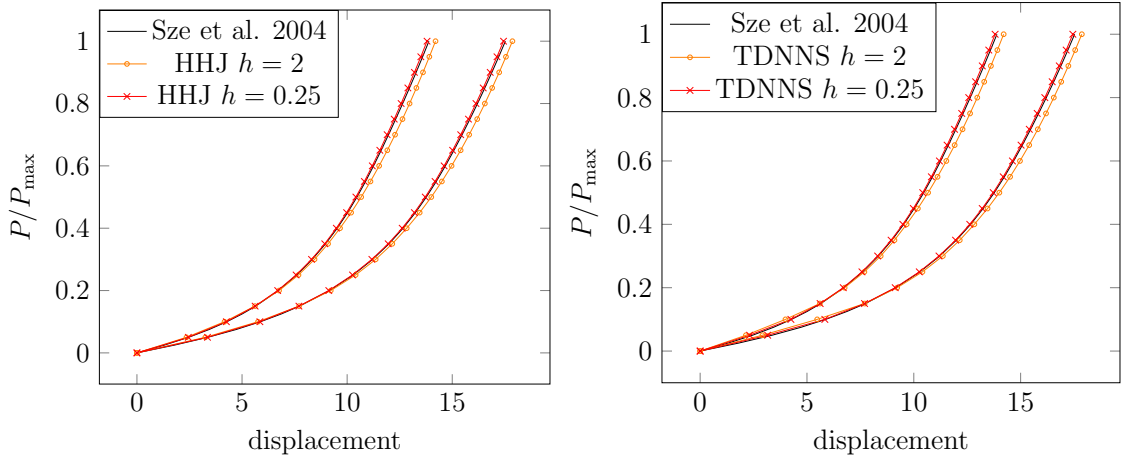
\begin{figure}
		\resizebox{0.48\textwidth}{!}{
	\begin{tikzpicture}
	\begin{axis}[
		legend style={at={(0,1)}, anchor=north west},
		xlabel={displacement},
		ylabel={$P/P_{\mathrm{max}}$},
		]
		
\addlegendentry{Sze et al. 2004}
\addplot[color=black, mark=none] coordinates {
	( 0 , 0.0 )
	( 1.789 , 0.025 )
	( 3.37 , 0.05 )
	( 4.72 , 0.075 )
	( 5.876 , 0.1 )
	( 6.872 , 0.125 )
	( 7.736 , 0.15 )
	( 9.16 , 0.2 )
	( 10.288 , 0.25 )
	( 11.213 , 0.3 )
	( 11.992 , 0.35 )
	( 12.661 , 0.4 )
	( 13.247 , 0.45 )
	( 13.768 , 0.5 )
	( 14.24 , 0.55 )
	( 14.674 , 0.6 )
	( 15.081 , 0.65 )
	( 15.469 , 0.7 )
	( 15.842 , 0.75 )
	( 16.202 , 0.8 )
	( 16.55 , 0.85 )
	( 16.886 , 0.9 )
	( 17.212 , 0.95 )
	( 17.528 , 1.0 )
};
	
	\addplot[color=black, mark=none, forget plot] coordinates {
	( 0 , 0.0 )
	( 1.305 , 0.025 )
	( 2.455 , 0.05 )
	( 3.435 , 0.075 )
	( 4.277 , 0.1 )
	( 5.007 , 0.125 )
	( 5.649 , 0.15 )
	( 6.725 , 0.2 )
	( 7.602 , 0.25 )
	( 8.34 , 0.3 )
	( 8.974 , 0.35 )
	( 9.529 , 0.4 )
	( 10.023 , 0.45 )
	( 10.468 , 0.5 )
	( 10.876 , 0.55 )
	( 11.257 , 0.6 )
	( 11.62 , 0.65 )
	( 11.97 , 0.7 )
	( 12.31 , 0.75 )
	( 12.642 , 0.8 )
	( 12.966 , 0.85 )
	( 13.282 , 0.9 )
	( 13.59 , 0.95 )
	( 13.891 , 1.0 )
};
	
	\addlegendentry{HHJ $h=2$}
	\addplot[color=orange, mark=o, mark options={scale=0.5}] coordinates {
	( 0 , 0.0 )
	( 3.2540201967365787 , 0.05 )
	( 5.759685178332613 , 0.1 )
	( 7.71443332987722 , 0.15 )
	( 9.220894802826166 , 0.2 )
	( 10.40320373676282 , 0.25 )
	( 11.357773668266649 , 0.3 )
	( 12.149405416522205 , 0.35 )
	( 12.833835134655825 , 0.4 )
	( 13.451418358003272 , 0.45 )
	( 14.004188539694919 , 0.5 )
	( 14.506360941792595 , 0.55 )
	( 14.968895419452705 , 0.6 )
	( 15.39978921417839 , 0.65 )
	( 15.804694021369126 , 0.7 )
	( 16.187709671110724 , 0.75 )
	( 16.55195207797078 , 0.8 )
	( 16.899858655655805 , 0.85 )
	( 17.233369155484258 , 0.9 )
	( 17.554046972648575 , 0.95 )
	( 17.86316587575803 , 1.0 )
};
	
\addplot[color=orange, mark=o, mark options={scale=0.5}, forget plot] coordinates {	
	( 0 , 0.0 )
	( 2.357408691324333 , 0.05 )
	( 4.167512860066901 , 0.1 )
	( 5.6143094984462465 , 0.15 )
	( 6.7617066796480625 , 0.2 )
	( 7.685831629830488 , 0.25 )
	( 8.448555896140363 , 0.3 )
	( 9.093236630596937 , 0.35 )
	( 9.66239597388032 , 0.4 )
	( 10.189161010532985 , 0.45 )
	( 10.668572647552205 , 0.5 )
	( 11.110445863923145 , 0.55 )
	( 11.522817895950702 , 0.6 )
	( 11.911525717291905 , 0.65 )
	( 12.280580223560142 , 0.7 )
	( 12.632844104215327 , 0.75 )
	( 12.97049452982366 , 0.8 )
	( 13.29524504480578 , 0.85 )
	( 13.608468148947049 , 0.9 )
	( 13.911276386883898 , 0.95 )
	( 14.204580756048022 , 1.0 )
};

	\addlegendentry{HHJ $h=0.25$}
	\addplot[color=red, mark=x, mark options={scale=1}] coordinates {	
	( 0 , 0.0 )
	( 3.3569413066066556 , 0.05 )
	( 5.849358958786691 , 0.1 )
	( 7.705858588235633 , 0.15 )
	( 9.127710449832135 , 0.2 )
	( 10.25453692507849 , 0.25 )
	( 11.17695573841287 , 0.3 )
	( 11.952137656008942 , 0.35 )
	( 12.617527779640296 , 0.4 )
	( 13.199089805796323 , 0.45 )
	( 13.715917599937054 , 0.5 )
	( 14.1830510994187 , 0.55 )
	( 14.613231438409896 , 0.6 )
	( 15.017191169593744 , 0.65 )
	( 15.402279068462729 , 0.7 )
	( 15.772139382890819 , 0.75 )
	( 16.12855654826406 , 0.8 )
	( 16.472756813133262 , 0.85 )
	( 16.80565238295157 , 0.9 )
	( 17.12787399860592 , 0.95 )
	( 17.439851973610967 , 1.0 )
};

\addplot[color=red, mark=x, mark options={scale=1}, forget plot] coordinates {
	( 0 , 0.0 )
	( 2.444975472500222 , 0.05 )
	( 4.254739691156199 , 0.1 )
	( 5.6221122602165 , 0.15 )
	( 6.696172443357651 , 0.2 )
	( 7.571002147027235 , 0.25 )
	( 8.305144007400392 , 0.3 )
	( 8.935304521545543 , 0.35 )
	( 9.486058395440898 , 0.4 )
	( 9.97506693243977 , 0.45 )
	( 10.415876507651504 , 0.5 )
	( 10.819711257958438 , 0.55 )
	( 11.196653514843211 , 0.6 )
	( 11.555579277374681 , 0.65 )
	( 11.902467257052677 , 0.7 )
	( 12.239820044028063 , 0.75 )
	( 12.568440808804176 , 0.8 )
	( 12.88874571948002 , 0.85 )
	( 13.200989985449366 , 0.9 )
	( 13.505269879197128 , 0.95 )
	( 13.801579244503525 , 1.0 )
};
	\end{axis}
	\end{tikzpicture}}
\resizebox{0.48\textwidth}{!}{\begin{tikzpicture}
	\begin{axis}[
	legend style={at={(0,1)}, anchor=north west},
	xlabel={displacement},
	ylabel={$P/P_{\mathrm{max}}$},
	]
	\addlegendentry{Sze et al. 2004}
	\addplot[color=black, mark=none] coordinates {
		( 0 , 0.0 )
		( 1.789 , 0.025 )
		( 3.37 , 0.05 )
		( 4.72 , 0.075 )
		( 5.876 , 0.1 )
		( 6.872 , 0.125 )
		( 7.736 , 0.15 )
		( 9.16 , 0.2 )
		( 10.288 , 0.25 )
		( 11.213 , 0.3 )
		( 11.992 , 0.35 )
		( 12.661 , 0.4 )
		( 13.247 , 0.45 )
		( 13.768 , 0.5 )
		( 14.24 , 0.55 )
		( 14.674 , 0.6 )
		( 15.081 , 0.65 )
		( 15.469 , 0.7 )
		( 15.842 , 0.75 )
		( 16.202 , 0.8 )
		( 16.55 , 0.85 )
		( 16.886 , 0.9 )
		( 17.212 , 0.95 )
		( 17.528 , 1.0 )
	};
	
	\addplot[color=black, mark=none, forget plot] coordinates {
		( 0 , 0.0 )
		( 1.305 , 0.025 )
		( 2.455 , 0.05 )
		( 3.435 , 0.075 )
		( 4.277 , 0.1 )
		( 5.007 , 0.125 )
		( 5.649 , 0.15 )
		( 6.725 , 0.2 )
		( 7.602 , 0.25 )
		( 8.34 , 0.3 )
		( 8.974 , 0.35 )
		( 9.529 , 0.4 )
		( 10.023 , 0.45 )
		( 10.468 , 0.5 )
		( 10.876 , 0.55 )
		( 11.257 , 0.6 )
		( 11.62 , 0.65 )
		( 11.97 , 0.7 )
		( 12.31 , 0.75 )
		( 12.642 , 0.8 )
		( 12.966 , 0.85 )
		( 13.282 , 0.9 )
		( 13.59 , 0.95 )
		( 13.891 , 1.0 )
	};

\addlegendentry{TDNNS $h=2$}
\addplot[color=orange, mark=o, mark options={scale=0.5}] coordinates {	
( 0 , 0 )
( 2.916569269958399 , 0.05 )
( 5.478928582970616 , 0.1 )
( 7.700633287740316 , 0.15 )
( 9.221293107740129 , 0.2 )
( 10.40363141583161 , 0.25 )
( 11.358220359621837 , 0.3 )
( 12.149867493294845 , 0.35 )
( 12.83434673787563 , 0.4 )
( 13.451958257080769 , 0.45 )
( 14.004750455168105 , 0.5 )
( 14.506947332108718 , 0.55 )
( 14.96950861222615 , 0.6 )
( 15.400429826189193 , 0.65 )
( 15.805361606359543 , 0.7 )
( 16.18840359022511 , 0.75 )
( 16.552671697639713 , 0.8 )
( 16.900603340214317 , 0.85 )
( 17.234138242666273 , 0.9 )
( 17.554839759407226 , 0.95 )
( 17.863981613249152 , 1.0 )
};

\addplot[color=orange, mark=o, mark options={scale=0.5}, forget plot] coordinates {	
( 0 , 0 )
( 2.143182788318975 , 0.05 )
( 3.993713638977486 , 0.1 )
( 5.605740509598568 , 0.15 )
( 6.762069735092866 , 0.2 )
( 7.686227893506629 , 0.25 )
( 8.44897455965427 , 0.3 )
( 9.093673548735852 , 0.35 )
( 9.662886575262563 , 0.4 )
( 10.189684867796734 , 0.45 )
( 10.669121850751017 , 0.5 )
( 11.111022495297707 , 0.55 )
( 11.52342390605545 , 0.6 )
( 11.91216125273461 , 0.65 )
( 12.281244465605033 , 0.7 )
( 12.633536156948512 , 0.75 )
( 12.971213572626848 , 0.8 )
( 13.295990296357333 , 0.85 )
( 13.609238828361198 , 0.9 )
( 13.912071691843447 , 0.95 )
( 14.205399853193354 , 1.0 )
};

\addlegendentry{TDNNS $h=0.25$}
\addplot[color=red, mark=x, mark options={scale=1}] coordinates {	
( 0 , 0 )
(3.16060721519766, 0.05)
(5.837183598398284, 0.1)
(7.694360245540096, 0.15)
(9.130515049446435, 0.2)
(10.257526789485464, 0.25)
(11.180076259001446, 0.3)
(11.95534240691727, 0.35)
(12.620783987155315, 0.4)
(13.20237975109802, 0.45)
(13.719239404032292, 0.5)
(14.186421404838798, 0.55)
(14.616686460874325, 0.6)
(15.020772063981479, 0.65)
(15.406005294073344, 0.7)
(15.776008454320332, 0.75)
(16.13256022977733, 0.8)
(16.476886593142652, 0.85)
(16.809898868475415, 0.9)
(17.132226752423037, 0.95)
(17.444300024746155, 1)
};

\addplot[color=red, mark=x, mark options={scale=1}, forget plot] coordinates {	
	( 0 , 0 )
	(2.315426006836379, 0.05)
	(4.247942465593096, 0.1)
	(5.615685256943746, 0.15)
	(6.698778666553625, 0.2)
	(7.57381926226578, 0.25)
	(8.308110035634853, 0.3)
	(8.938367891420382, 0.35)
	(9.489183613974939, 0.4)
	(9.978235001437406, 0.45)
	(10.419085059423223, 0.5)
	(10.822977138041699, 0.55)
	(11.200014147528098, 0.6)
	(11.559077930774224, 0.65)
	(11.906124945843139, 0.7)
	(12.243634315766682, 0.75)
	(12.572402581920123, 0.8)
	(12.892845388441536, 0.85)
	(13.205217006636552, 0.9)
	(13.509612681532737, 0.95)
	(13.806025792566581, 1)
};

\end{axis}
\end{tikzpicture}}
	\caption{Vertical deflection of points $A$ and $B$ of slit annular plate for $p1$ HHJ (left) and $p1$ TDNNS (right).}
	\label{fig:res_slit_annular_plate}
\end{figure}

\subsection{Pinched cylinder}
\label{subsec:pinched_cyl}
The pinched cylinder is a well-known benchmark for geometrically nonlinear shell elements, see e.g. \cite{OSRB2017, SLL2004, JEON2015}. The left side is clamped, whereas the right is free and a point force acts on the top and bottom pointing inwards. Due to symmetry only one half of the cylinder is considered and symmetry boundary conditions are prescribed on the resulting two boundaries, cf. Figure~\ref{fig:pinched_cyl_mesh} (left). The parameters are $R=103.1$, $L=304.8$, $t=3$, $\YoungsModulus=2068.5$, $\PoissonRatio=0.3$, and $P_z=2000$ (same, scaled results are obtained for $R=1.031$, $L=3.048$, $t=0.03$, and $\YoungsModulus=2.0685\times 10^7$). Reference values are taken from \cite{SLL2004}. Due to the clamped boundary and the parabolic geometry the benchmark is in the membrane dominated regime such that no shear or membrane locking is expected to occur. Nevertheless, the Regge interpolant is used in the membrane energy to show that no spurious zero energy modes are induced. For this example we consider structured $16\times 16$ and $32\times32$ quadrilateral meshes for the methods $p1$ and $p2$ HHJ and TDNNS, respectively. The initial and deformed configuration is illustrated in Figure~\ref{fig:pinched_cyl_mesh} (right). In Figure~\ref{fig:res_pinched_cylinder} the radial deflection of point $A$ is depicted. For the first 12 of 30 loadsteps a dampfactor of $\min\{0.1\times \mathrm{it},1\}$ is considered and for the later steps it is relaxed to $\min\{0.25\times \mathrm{it},1\}$. The lowest-order method $p1$ together with the coarse $16\times 16$ has problems resembling the reference solution for the HHJ and TDNNS, however, still reflects the qualitative behavior. Increasing the polynomial order or using a finer grid resolves this issue and the results coincide well with the reference values.
\begin{figure}
	\centering
	\includegraphics[width=0.4\textwidth]{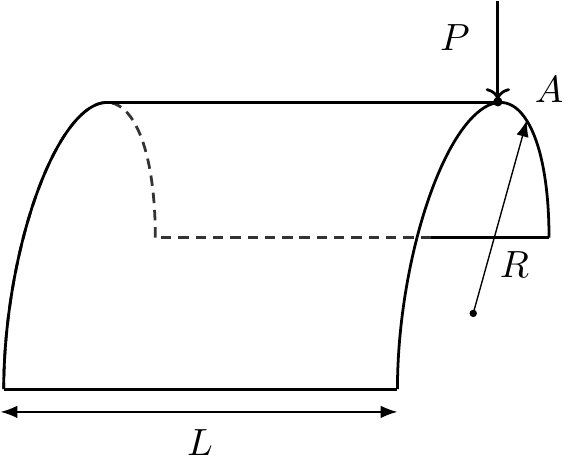}\hspace*{0.5cm}
	\includegraphics[width=0.4\textwidth]{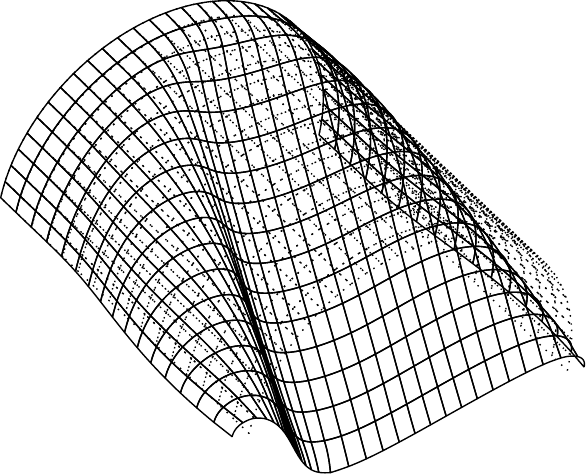}
	\caption{Left: Geometry of pinched cylinder. Right: Initial (dotted) and deformed (solid) mesh.}
	\label{fig:pinched_cyl_mesh}
\end{figure}


\begin{figure}
		\resizebox{0.48\textwidth}{!}{
	\begin{tikzpicture}
	\begin{axis}[
		legend style={at={(0,1)}, anchor=north west},
		xlabel={displacement},
		ylabel={$P/P_{\mathrm{max}}$},
		]
\addlegendentry{Sze et al. 2004}
\addplot[color=black, mark=none] coordinates {
	( 0 , 0.0 )
	( 5.421 , 0.05 )
	( 16.100 , 0.1 )
	( 22.195 , 0.125 )
	( 27.657 , 0.15 )
	( 32.700 , 0.175 )
	( 37.582 , 0.2 )
	( 42.633 , 0.225 )
	( 48.537 , 0.25 )
	( 56.355 , 0.275 )
	( 66.410 , 0.3 )
	( 79.810 , 0.325 )
	( 94.669 , 0.35 )
	( 113.704 , 0.4 )
	( 124.751 , 0.45 )
	( 132.653 , 0.5 )
	( 138.920 , 0.55 )
	( 144.185 , 0.6 )
	( 148.770 , 0.65 )
	( 152.863 , 0.7 )
	( 156.584 , 0.75 )
	( 160.015 , 0.8 )
	( 163.211 , 0.85 )
	( 166.200 , 0.9 )
	( 168.973 , 0.95 )
	( 171.505 , 1.0 )
};

\addlegendentry{HHJ p1 $16\times16$}
\addplot[color=blue, mark=o, mark options={scale=0.5}] coordinates {
( 0 , 0 )
( 3.29997 , 0.033 )
( 8.59679 , 0.067 )
( 16.15911 , 0.1 )
( 23.70729 , 0.133 )
( 30.66666 , 0.167 )
( 37.18093 , 0.2 )
( 44.53333 , 0.233 )
( 54.91291 , 0.267 )
( 67.99161 , 0.3 )
( 83.14401 , 0.333 )
( 89.31763 , 0.367 )
( 103.82739 , 0.4 )
( 107.96148 , 0.433 )
( 120.27468 , 0.467 )
( 123.12684 , 0.5 )
( 125.55681 , 0.533 )
( 127.6755 , 0.567 )
( 137.13027 , 0.6 )
( 138.81218 , 0.633 )
( 140.32305 , 0.667 )
( 141.69328 , 0.7 )
( 142.95168 , 0.733 )
( 144.13604 , 0.767 )
( 152.59159 , 0.8 )
( 152.99979 , 0.833 )
( 154.0009 , 0.867 )
( 154.94238 , 0.9 )
( 155.83763 , 0.933 )
( 156.7059 , 0.967 )
( 163.65756 , 1.0 )
};

\addlegendentry{HHJ p2 $16\times16$}
\addplot[color=red, mark=x, mark options={scale=1}] coordinates {	
( 0 , 0 )
( 3.21526 , 0.033 )
( 8.12435 , 0.067 )
( 15.81922 , 0.1 )
( 23.76736 , 0.133 )
( 30.81622 , 0.167 )
( 37.50117 , 0.2 )
( 44.64956 , 0.233 )
( 53.95322 , 0.267 )
( 67.0362 , 0.3 )
( 86.27827 , 0.333 )
( 102.86969 , 0.367 )
( 114.1751 , 0.4 )
( 121.823 , 0.433 )
( 127.88912 , 0.467 )
( 132.43156 , 0.5 )
( 137.6463 , 0.533 )
( 140.95344 , 0.567 )
( 144.08759 , 0.6 )
( 148.22226 , 0.633 )
( 150.78136 , 0.667 )
( 153.06908 , 0.7 )
( 155.90652 , 0.733 )
( 158.895 , 0.767 )
( 160.84692 , 0.8 )
( 162.66394 , 0.833 )
( 164.67041 , 0.867 )
( 167.57799 , 0.9 )
( 169.24834 , 0.933 )
( 170.75256 , 0.967 )
( 172.20456 , 1.0 )
};

\addlegendentry{HHJ p1 $32\times32$}
\addplot[color=cyan, mark=diamond, mark options={scale=1}] coordinates {
( 0 , 0 )
( 3.24252 , 0.033 )
( 8.22893 , 0.067 )
( 15.9063 , 0.1 )
( 23.75457 , 0.133 )
( 30.74688 , 0.167 )
( 37.38397 , 0.2 )
( 44.49529 , 0.233 )
( 53.75914 , 0.267 )
( 66.45633 , 0.3 )
( 84.28864 , 0.333 )
( 101.77589 , 0.367 )
( 113.28717 , 0.4 )
( 121.35363 , 0.433 )
( 127.57743 , 0.467 )
( 132.68324 , 0.5 )
( 137.04318 , 0.533 )
( 140.87372 , 0.567 )
( 144.29886 , 0.6 )
( 147.41385 , 0.633 )
( 150.2787 , 0.667 )
( 152.93551 , 0.7 )
( 155.42405 , 0.733 )
( 157.7644 , 0.767 )
( 159.98193 , 0.8 )
( 162.0921 , 0.833 )
( 164.10223 , 0.867 )
( 166.03134 , 0.9 )
( 167.86543 , 0.933 )
( 169.62059 , 0.967 )
( 171.27879 , 1.0 )
};

\addlegendentry{HHJ p2 $32\times32$}
\addplot[color=orange, mark=square, mark options={scale=1}] coordinates {	
( 0 , 0 )
( 3.20059 , 0.033 )
( 8.10748 , 0.067 )
( 15.85925 , 0.1 )
( 23.82996 , 0.133 )
( 30.88055 , 0.167 )
( 37.55515 , 0.2 )
( 44.68945 , 0.233 )
( 54.02178 , 0.267 )
( 67.20833 , 0.3 )
( 86.19892 , 0.333 )
( 103.24033 , 0.367 )
( 114.07041 , 0.4 )
( 121.74594 , 0.433 )
( 127.7457 , 0.467 )
( 132.72101 , 0.5 )
( 137.00616 , 0.533 )
( 140.79354 , 0.567 )
( 144.2036 , 0.6 )
( 147.31886 , 0.633 )
( 150.1951 , 0.667 )
( 152.87568 , 0.7 )
( 155.39146 , 0.733 )
( 157.7681 , 0.767 )
( 160.02369 , 0.8 )
( 162.17364 , 0.833 )
( 164.22518 , 0.867 )
( 166.18335 , 0.9 )
( 168.0435 , 0.933 )
( 169.80195 , 0.967 )
( 171.45475 , 1.0 )
};

\end{axis}
\end{tikzpicture}}
	\resizebox{0.48\textwidth}{!}{
	\begin{tikzpicture}
		\begin{axis}[
			legend style={at={(0,1)}, anchor=north west},
			xlabel={displacement},
			ylabel={$P/P_{\mathrm{max}}$},
			]
			\addlegendentry{Sze et al. 2004}
			\addplot[color=black, mark=none] coordinates {
				( 0 , 0.0 )
				( 5.421 , 0.05 )
				( 16.100 , 0.1 )
				( 22.195 , 0.125 )
				( 27.657 , 0.15 )
				( 32.700 , 0.175 )
				( 37.582 , 0.2 )
				( 42.633 , 0.225 )
				( 48.537 , 0.25 )
				( 56.355 , 0.275 )
				( 66.410 , 0.3 )
				( 79.810 , 0.325 )
				( 94.669 , 0.35 )
				( 113.704 , 0.4 )
				( 124.751 , 0.45 )
				( 132.653 , 0.5 )
				( 138.920 , 0.55 )
				( 144.185 , 0.6 )
				( 148.770 , 0.65 )
				( 152.863 , 0.7 )
				( 156.584 , 0.75 )
				( 160.015 , 0.8 )
				( 163.211 , 0.85 )
				( 166.200 , 0.9 )
				( 168.973 , 0.95 )
				( 171.505 , 1.0 )
			};

			\addlegendentry{TDNNS p1 $16\times16$}
			\addplot[color=blue, mark=o, mark options={scale=0.5}] coordinates {
				( 0 , 0 )
				( 3.33455 , 0.033 )
				( 8.71529 , 0.067 )
				( 16.26064 , 0.1 )
				( 23.7813 , 0.133 )
				( 30.7487 , 0.167 )
				( 37.28633 , 0.2 )
				( 44.70988 , 0.233 )
				( 55.19861 , 0.267 )
				( 68.29562 , 0.3 )
				( 75.34301 , 0.333 )
				( 89.41408 , 0.367 )
				( 97.4297 , 0.4 )
				( 108.07895 , 0.433 )
				( 111.49772 , 0.467 )
				( 123.28669 , 0.5 )
				( 125.69201 , 0.533 )
				( 127.77275 , 0.567 )
				( 129.61371 , 0.6 )
				( 139.63763 , 0.633 )
				( 141.05036 , 0.667 )
				( 142.32979 , 0.7 )
				( 143.49156 , 0.733 )
				( 144.54697 , 0.767 )
				( 145.49599 , 0.8 )
				( 145.90553 , 0.833 )
				( 143.27475 , 0.867 )
				( 160.65619 , 0.9 )
				( 162.00829 , 0.933 )
				( 162.95251 , 0.967 )
				( 163.83499 , 1.0 )
			};
			
			\addlegendentry{TDNNS p2 $16\times16$}
			\addplot[color=red, mark=x, mark options={scale=1}] coordinates {
				( 0 , 0 )
				( 3.27178 , 0.033 )
				( 8.28247 , 0.067 )
				( 15.99927 , 0.1 )
				( 23.89608 , 0.133 )
				( 30.93494 , 0.167 )
				( 37.63982 , 0.2 )
				( 44.85496 , 0.233 )
				( 54.25443 , 0.267 )
				( 67.45117 , 0.3 )
				( 86.75191 , 0.333 )
				( 103.20696 , 0.367 )
				( 114.48232 , 0.4 )
				( 122.26015 , 0.433 )
				( 128.14725 , 0.467 )
				( 132.68976 , 0.5 )
				( 137.93068 , 0.533 )
				( 141.1848 , 0.567 )
				( 144.32647 , 0.6 )
				( 148.51201 , 0.633 )
				( 151.02142 , 0.667 )
				( 153.29115 , 0.7 )
				( 156.45975 , 0.733 )
				( 159.16544 , 0.767 )
				( 161.08701 , 0.8 )
				( 162.89049 , 0.833 )
				( 164.9296 , 0.867 )
				( 167.86199 , 0.9 )
				( 169.49969 , 0.933 )
				( 170.99105 , 0.967 )
				( 172.4382 , 1.0 )	
			};
			
			\addlegendentry{TDNNS p1 $32\times32$}
			\addplot[color=cyan, mark=diamond, mark options={scale=1}] coordinates {
				( 0 , 0 )
				( 3.2924 , 0.033 )
				( 8.37387 , 0.067 )
				( 16.06157 , 0.1 )
				( 23.86326 , 0.133 )
				( 30.84873 , 0.167 )
				( 37.50763 , 0.2 )
				( 44.68925 , 0.233 )
				( 54.05804 , 0.267 )
				( 66.84414 , 0.3 )
				( 84.79574 , 0.333 )
				( 102.19749 , 0.367 )
				( 113.63399 , 0.4 )
				( 121.66366 , 0.433 )
				( 127.86717 , 0.467 )
				( 132.95992 , 0.5 )
				( 137.31025 , 0.533 )
				( 141.13692 , 0.567 )
				( 144.5578 , 0.6 )
				( 147.66906 , 0.633 )
				( 150.5339 , 0.667 )
				( 153.18697 , 0.7 )
				( 155.67704 , 0.733 )
				( 158.01429 , 0.767 )
				( 160.23302 , 0.8 )
				( 162.34147 , 0.833 )
				( 164.35028 , 0.867 )
				( 166.28 , 0.9 )
				( 168.10964 , 0.933 )
				( 169.86691 , 0.967 )
				( 171.52297 , 1.0 )
			};

			\addlegendentry{TDNNS p2 $32\times32$}
			\addplot[color=orange, mark=square, mark options={scale=1}] coordinates {	
				( 0 , 0 )
				( 3.27953 , 0.033 )
				( 8.3302 , 0.067 )
				( 16.12318 , 0.1 )
				( 24.0195 , 0.133 )
				( 31.04539 , 0.167 )
				( 37.73031 , 0.2 )
				( 44.92365 , 0.233 )
				( 54.35875 , 0.267 )
				( 67.64256 , 0.3 )
				( 86.75282 , 0.333 )
				( 103.69525 , 0.367 )
				( 114.45709 , 0.4 )
				( 122.09907 , 0.433 )
				( 128.08021 , 0.467 )
				( 133.04421 , 0.5 )
				( 137.32215 , 0.533 )
				( 141.10493 , 0.567 )
				( 144.5119 , 0.6 )
				( 147.62519 , 0.633 )
				( 150.50027 , 0.667 )
				( 153.18003 , 0.7 )
				( 155.69543 , 0.733 )
				( 158.0716 , 0.767 )
				( 160.32675 , 0.8 )
				( 162.47582 , 0.833 )
				( 164.52611 , 0.867 )
				( 166.48268 , 0.9 )
				( 168.34112 , 0.933 )
				( 170.09873 , 0.967 )
				( 171.75175 , 1.0 )
			};
			
		\end{axis}
\end{tikzpicture}}
	\caption{Radial deflection at point $A$ of pinched cylinder. Left: HHJ  method. Right: TDNNS method.}
	\label{fig:res_pinched_cylinder}
\end{figure}

\subsection{T-cantilever under shear force}
\label{subsec:t_cant}
We consider a T-cantilever as test case for a non-smooth structure with kinks, where three branches are connected by a single edge, cf. Figure~\ref{fig:tcant_mesh} (left). The parameters read $L=1$, $\PoissonRatio=0$, $\YoungsModulus=6\times 10^6$, $t=0.1$, and a vertical force $P_{\mathrm{max}}=3\times 10^3\mat{1 & 0 & 1}^\top$ is applied on the top left edge resulting to a strong rotation of the structure, see Figure~\ref{fig:tcant_mesh} (right). The bottom boundary is clamped, the others are left free. The $x$-displacement of point $A$ and $z$-displacement of $B$ are depicted in Figure~\ref{fig:tcant_results} (top) for unstructured triangular meshes for $p3$ ($h=0.5$ and $h=0.25$ corresponding to 14 and 62 elements) and are displayed in Table~\ref{tab:refvalues_tstruct} (with $h=0.125$ corresponding to 278 elements). The modulus of the bending moment tensor $\bending$ is shown in Figure~\ref{fig:tcant_results} (bottom). Due to the large deformation, we damp the first two Newton iterations with a factor 1/3 and 2/3, respectively. One can clearly observe that the moments induced by the shear force get all absorbed by the clamped boundary at the bottom, the right branch undergoes a pure rotation such that the moment tensor is zero there. Further, the initial $90^{\degree}$ angle at the kink, where the branches meet, gets preserved. For this example the difference between the nonlinear HHJ and TDNNS methods are marginally.
\begin{figure}
	\centering
	\includegraphics[width=0.4\textwidth]{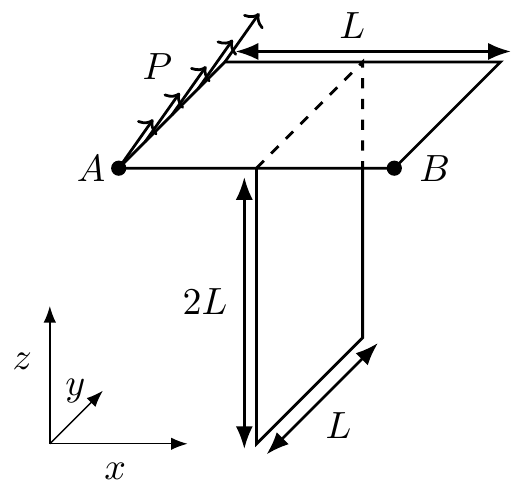}\hspace*{0.5cm}
	\includegraphics[width=0.4\textwidth]{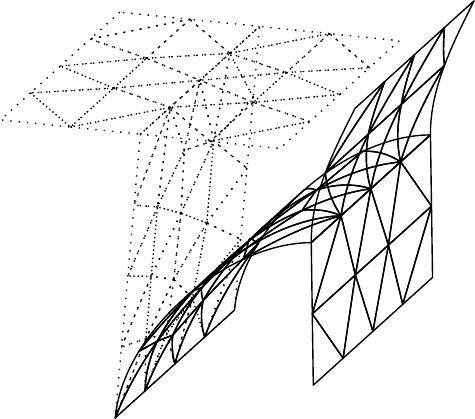}
	\caption{Left: Geometry of T-cantilever. Right: Initial (dotted) and deformed (solid) mesh.}
	\label{fig:tcant_mesh}
\end{figure}

\begin{figure}
	\centering
	\include{results_t_struct}
	\caption{Top: $x$ displacement of $A$ and $z$-displacement of $B$ for $p3$ HHJ and TDNNS. Bottom: Modulus of bending moment tensor at final configuration.}
	\label{fig:tcant_results}
\end{figure}

\begin{table}
	\scalebox{0.82}{\mbox{\ensuremath{\displaystyle
				\begin{tabular}{c|cc|cc||c|cc|cc}
					& $u^{\mathrm{HHJ}}_x(A)$ & $u^{\mathrm{HHJ}}_z(B)$&  $u^{\mathrm{TDNNS}}_x(A)$& $u^{\mathrm{TDNNS}}_z(B)$&& $u^{\mathrm{HHJ}}_x(A)$ & $u^{\mathrm{HHJ}}_z(B)$ & $u^{\mathrm{TDNNS}}_x(A)$ &$u^{\mathrm{TDNNS}}_z(B)$\\
					\hline
					0.05 & 0.198 & 0.168 & 0.199 & 0.168 & 0.55 & 1.113 & 0.767 & 1.129 & 0.773 \\
					0.1  & 0.409 & 0.334 & 0.410 & 0.334 & 0.6  & 1.137 & 0.777 & 1.155 & 0.784 \\
					0.15 & 0.585 & 0.461 & 0.588 & 0.462 & 0.65 & 1.158 & 0.786 & 1.178 & 0.793 \\
					0.2  & 0.721 & 0.551 & 0.725 & 0.553 & 0.7  & 1.177 & 0.792 & 1.199 & 0.801 \\
					0.25 & 0.823 & 0.615 & 0.829 & 0.617 & 0.75 & 1.193 & 0.798 & 1.218 & 0.808 \\
					0.3  & 0.901 & 0.660 & 0.908 & 0.663 & 0.8  & 1.208 & 0.803 & 1.234 & 0.813 \\
					0.35 & 0.962 & 0.694 & 0.971 & 0.697 & 0.85 & 1.221 & 0.807 & 1.250 & 0.818 \\
					0.4  & 1.011 & 0.720 & 1.022 & 0.723 & 0.9  & 1.233 & 0.810 & 1.264 & 0.823 \\
					0.45 & 1.052 & 0.740 & 1.064 & 0.744 & 0.95 & 1.243 & 0.813 & 1.277 & 0.827 \\
					0.5  & 1.085 & 0.755 & 1.099 & 0.760 & 1.0  & 1.253 & 0.815 & 1.290 & 0.830 \\
	\end{tabular}}}}
	\caption{Horizontal deflection at $A$ and vertical deflection of $B$ for different load steps for nonlinear HHJ and TDNNS method $p3$.}
	\label{tab:refvalues_tstruct}
\end{table}

\subsection{Axisymmetric hyperboloid with free ends}
\label{subsec:hyperboloid}
An axisymmetric hyperboloid described by the equation 
\begin{align}
	x^2+y^2=R^2+z^2,\quad z\in [-R,R]
\end{align}
with free boundaries is loaded by a periodic force, see e.g. \cite{CB11}. Due to symmetries it is sufficient to use one eighth of the geometry and symmetry boundary conditions are prescribed, see Figure \ref{fig:geom_hyp_shell} for the geometry and meshes. The material and geometric parameters are $R=1$, $\YoungsModulus=2.85\times 10^4$, $\PoissonRatio=0.3$, $t\in\{1,0.1,0.01,0.001\}$, and the periodic force $p=t^310^4\cos(2\zeta)\rnv$ is applied, where $\zeta\in [0,2\pi)$ denotes the angle. This benchmark is known to induce strong locking because of the hyperbolic geometry. Further, due to the free boundary condition boundary layers of magnitude $\mathcal{O}(t)$ occur for Reissner--Mindlin shells avoiding clear convergence rates. To resolve the boundary layers we place three quadrilateral layers in $0.75t$ at the free boundary and use structured triangular meshes to demonstrate the straight forward combination of triangular and quadrilateral elements, cf. Figure~\ref{fig:geom_hyp_shell} (right). In total $4\times 4$, $7\times 7$, $12\times 12$, $24\times 24$, $48\times 48$, $96\times 96$, and $192\times 192$ grids are considered. Only for $t=1$ no boundary layer mesh is used as the layers are directly resolved. Possible boundary layers of magnitude $\mathcal{O}(t^{\frac{1}{2}})$ induced by the curved geometry are negligible as from $(0.001)^{\frac{1}{2}}\approx0.03$  they get resolved by intermediate fine grids also for the smallest thickness. For comparison, structured triangular grids without boundary layer adaptions are considered too. The reference values for the linear Reissner--Mindlin shell given by the maximal radial deflection in the x-y-plane ($1.3577317$, $0.18954566$, $0.15046617$, $0.1498902$, respectively) were computed by reducing the equations to a one-dimensional problem exploiting the periodic force and solved by 1D high-order FEM (see e.g. \cite{SH2017} for an approach to derive the 1D equations).

\begin{figure}
	\centering
	\includegraphics[width=0.4\textwidth]{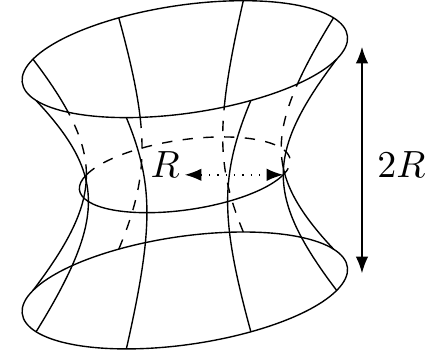}\hspace*{0.5cm}
	\includegraphics[width=0.4\textwidth]{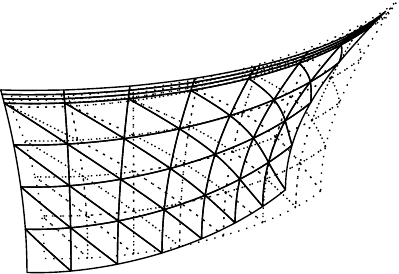}
	\caption{Left: Geometry of axisymmetric hyperboloid with free ends. Right: initial and deformed mesh for $t=0.1$ with boundary layer mesh.}
	\label{fig:geom_hyp_shell}
\end{figure}

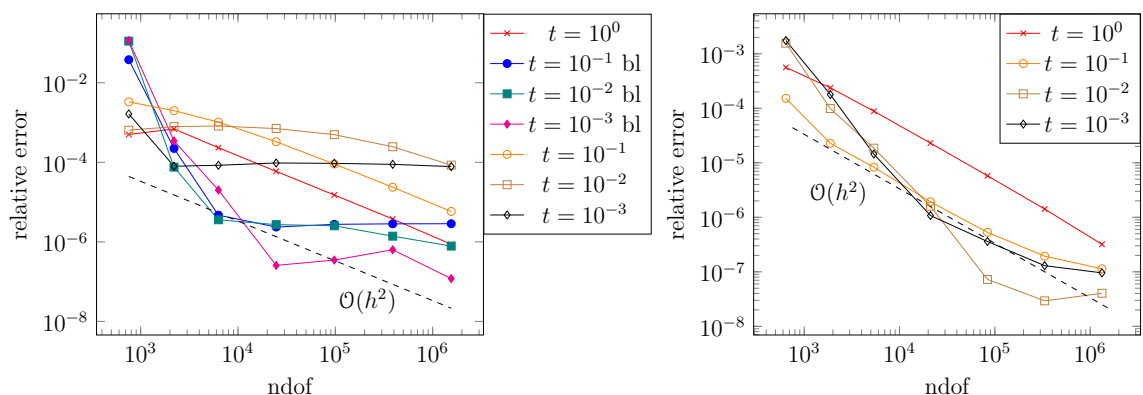
\begin{figure}
	\resizebox{0.99\textwidth}{!}{
		\begin{tikzpicture}
			\begin{axis}[
				legend style={at={(1,1)}, anchor=north west},
				xlabel={ndof},
				ylabel={relative error},
				xmode=log, ymode=log,
				]

				\addlegendentry{$t=10^0$}
				\addplot[color=red, mark=x] coordinates {
					( 755 , 0.0005001056809901336 )
					( 2201 , 0.0006908597653221863 )
					( 6291 , 0.0002328562932444221 )
					( 24675 , 5.926755246369034e-05 )
					( 97731 , 1.5057150315373797e-05 )
					( 388995 , 3.7538609331763965e-06 )
					( 1552131 , 8.689245173830874e-07 )
				};

				\addlegendentry{$t=10^{-1}$ bl}
				\addplot[color=blue, mark=*] coordinates {
					( 755 , 0.037954942584484154 )
					( 2201 , 0.00022355303830001188 )
					( 6291 , 4.6942751205848945e-06 )
					( 24675 , 2.3614399337004106e-06 )
					( 97731 , 2.7625562545571397e-06 )
					( 388995 , 2.840375013694958e-06 )
					( 1552131 , 2.858402006261053e-06 )
				};
				
				\addlegendentry{$t=10^{-2}$ bl}
				\addplot[color=teal, mark=square*] coordinates {
					( 755 , 0.11147934726153848 )
					( 2201 , 7.57557332575052e-05 )
					( 6291 , 3.589606834436527e-06 )
					( 24675 , 2.714246759994653e-06 )
					( 97731 , 2.5354951316609774e-06 )
					( 388995 , 1.3925282281808434e-06 )
					( 1552131 , 7.834255155792648e-07 )
				};
				
				\addlegendentry{$t=10^{-3}$ bl}
				\addplot[color=magenta, mark=diamond*] coordinates {
					( 755 , 0.11512660773563174 )
					( 2201 , 0.0003450574067036913 )
					( 6291 , 2.0163072643819853e-05 )
					( 24675 , 2.5645573963308006e-07 )
					( 97731 , 3.491540614350781e-07 )
					( 388995 , 6.331118444952694e-07 )
					( 1552131 , 1.1936994947570883e-07 )
				};

				\addlegendentry{$t=10^{-1}$}
				\addplot[color=orange, mark=o] coordinates {
					( 755 , 0.0033018241841378648 )
					( 2201 , 0.0019878168848974096 )
					( 6291 , 0.0010243500670593017 )
					( 24675 , 0.0003289857371609797 )
					( 97731 , 9.074188660994939e-05 )
					( 388995 , 2.361271398445586e-05 )
					( 1552131 , 5.815754202755355e-06 )
				};
				
				\addlegendentry{$t=10^{-2}$}
				\addplot[color=brown, mark=square] coordinates {
					( 755 , 0.0006411994238452574 )
					( 2201 , 0.0007916307113454231 )
					( 6291 , 0.0008166261891880194 )
					( 24675 , 0.0007065332204542134 )
					( 97731 , 0.000495693056488372 )
					( 388995 , 0.000246482268095958 )
					( 1552131 , 8.503488555007266e-05 )
				};
				
				\addlegendentry{$t=10^{-3}$}
				\addplot[color=black, mark=diamond] coordinates {
					( 755 , 0.00164926376836972 )
					( 2201 , 7.972625773660215e-05 )
					( 6291 , 8.42785320748116e-05 )
					( 24675 , 9.626725125192678e-05 )
					( 97731 , 9.41827118890767e-05 )
					( 388995 , 8.88505739401889e-05 )
					( 1552131 , 7.811557911792437e-05 )
				};
				
				\addplot[dashed,color=black, mark=none] coordinates {
					( 755 , 1/755/30 )
					( 2201 , 1/2201/30 )
					( 6291 , 1/6291/30 )
					( 24675 , 1/24675/30 )
					( 97731 , 1/97731/30 )
					( 388995 , 1/388995/30 )
					( 1552131 , 1/1552131/30 )
				};
			\end{axis}
			\node (A) at (4.8, 0.6) [] {$\mathcal{O}(h^2)$};
		\end{tikzpicture}
		\begin{tikzpicture}
			\begin{axis}[
				legend style={at={(1,1)}, anchor=north east},
				xlabel={ndof},
				ylabel={relative error},
				xmode=log, ymode=log,
				]
				
				\addlegendentry{$t=10^{0}$}
				\addplot[color=red, mark=x] coordinates {
					( 643 , 0.0005620815520611569 )
					( 1879 , 0.0002380742097911988 )
					( 5379 , 8.848961017907395e-05 )
					( 21123 , 2.29784728619802e-05 )
					( 83715 , 5.780324778764908e-06 )
					( 333315 , 1.4168941810968753e-06 )
					( 1330179 , 3.1914492273110775e-07 )
					
				};

				\addlegendentry{$t=10^{-1}$}
				\addplot[color=orange, mark=o] coordinates {
					( 643 , 0.00015252657066051062 )
					( 1879 , 2.2822434472351636e-05 )
					( 5379 , 8.31037643118966e-06 )
					( 21123 , 1.9241543449752374e-06 )
					( 83715 , 5.301776300223054e-07 )
					( 333315 , 1.935898061469691e-07 )
					( 1330179 , 1.1375496945443807e-07 )
				};
				
				\addlegendentry{$t=10^{-2}$}
				\addplot[color=brown, mark=square] coordinates {
					( 643 , 0.001565826368611172 )
					( 1879 , 9.894350689195958e-05 )
					( 5379 , 1.855560894438305e-05 )
					( 21123 , 1.59696663578528e-06 )
					( 83715 , 7.242110242490694e-08 )
					( 333315 , 2.9325637641015298e-08 )
					( 1330179 , 4.035919111479011e-08 )
				};
				
				\addlegendentry{$t=10^{-3}$}
				\addplot[color=black, mark=diamond] coordinates {
					( 643 , 0.0017490754445979306 )
					( 1879 , 0.00017906980534602464 )
					( 5379 , 1.4473516904363325e-05 )
					( 21123 , 1.0786319931350832e-06 )
					( 83715 , 3.5983366882713025e-07 )
					( 333315 , 1.2963250953641863e-07 )
					( 1330179 , 9.566918542413968e-08 )
					
				};
				
				\addplot[dashed,color=black, mark=none] coordinates {
					( 755 , 1/755/30 )
					( 2201 , 1/2201/30 )
					( 6291 , 1/6291/30 )
					( 24675 , 1/24675/30 )
					( 97731 , 1/97731/30 )
					( 388995 , 1/388995/30 )
					( 1552131 , 1/1552131/30 )
				};
			\end{axis}
			\node (A) at (1.5, 2.5) [] {$\mathcal{O}(h^2)$};
		\end{tikzpicture}
	}
	\caption{Left: Results hyperboloid for $p2$ TDNNS with (bl) and without boundary layer meshes for $t=1,0.1,0.01,0.001$. Right: $p2$ HHJ without boundary layer meshes.}
	\label{fig:result_hyperboloid}
\end{figure}

As depicted in Figure~\ref{fig:result_hyperboloid} (left) for $t=1$ quadratic convergence is obtained. Using structured triangular grids we observe a strong pre-asymptotic regime stemming from the non-resolved boundary layers, especially for $t=0.001$ the solution seems to stop converging after one refinement step as the dominant error corresponds to the boundary layer. This behavior must not be confused with locking, where the solution tends to be zero such that the pre-asymptotic regime would be close to the relative error of 1. Using quadrilaterals for resolving the boundary layers we observe a nearly uniform, fast convergence up to a relative error $<10^{-5}$ with less than 10000 dofs. As we kept the quadrilateral layer width fixed during refinement, the convergence stops when the boundary layer error becomes dominant. Using a more sophisticated adaptive strategy this behavior can be overcome, which is, however, out of scope for this work. In Figure~\ref{fig:result_hyperboloid} (right) the linear Kirchhoff--Love shell is considered with $p2$ HHJ and reference values ($0.8549465$,  $0.1856305$, $0.1502913$, and $0.1498749$). Therein a convergence to the reference values is observed without the necessity of using boundary layer meshes, as the boundary layers of thickness $\mathcal{O}(t)$ do not arise for this shell model.

\subsection{Raasch's Hook}
\label{subsec:raasch_hook}
We consider Raasch's Hook \cite{Kni1997} as a benchmark to compare the HHJ and TDNNS method for different thicknesses. In this example the linear Koiter and Naghdi shell models are used. The geometry is depicted in Figure~\ref{fig:raasch_hooke_mesh} (left), where $R_1=14$, $R_2=46$, $w=20$, and $\alpha=30^{\degree}$. We consider thicknesses $t\in\{20,2,0.2,0.02\}$ and the shear force $P$ at the right boundary is scaled by $(t/2)^3w^{-1}$, as we are in the bending dominated regime. The main deformation corresponds to a shearing, cf. Figure~\ref{fig:raasch_hooke_mesh} (right). The left boundary is clamped, the other two are free. We measure the shearing deflection of point $A$ placed at the middle of the right boundary. The values are listed in Table~\ref{tab:res_raasch_hooke}. 

We clearly observe that the methods differ for large thickness ($16.9632$ vs. $10.247$) for the smallest thickness the relative error between the models becomes less than $2\times 10^{-4}$. To minimize the discretization error the $p4$ HHJ and TDNNS method have been considered on a fine unstructured triangular grid. For the configuration $t=2$ the reference value of 5.027 for the linear Naghdi shell model proposed in \cite{KCL1998} is in common to our results.
\begin{figure}
	\centering
	\includegraphics[width=0.49\textwidth]{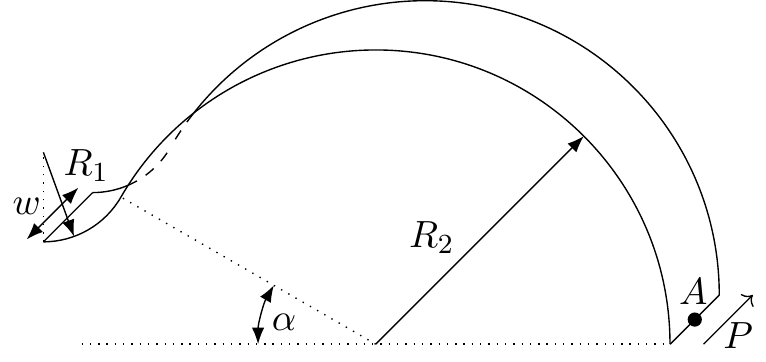}
	\includegraphics[width=0.49\textwidth]{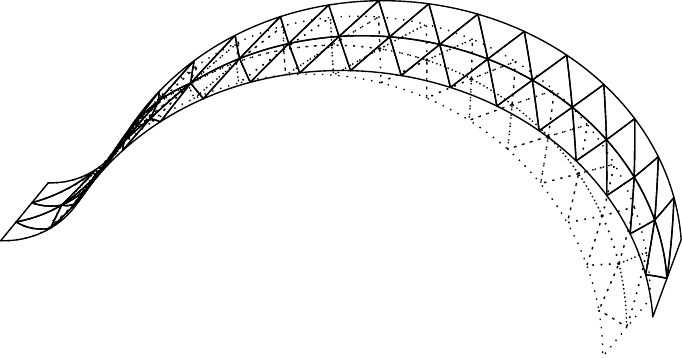}
	\caption{Left: Geometry of Raasch's Hooke. Right: Initial (dotted) and deformed (solid) mesh for $t=20$.}
	\label{fig:raasch_hooke_mesh}
\end{figure}

\begin{table}
	\centering
	\begin{tabular}{c|cccc}
		method$\backslash t$ & 20 & 2 & 0.2 & 0.02 \\
		\hline
		TDNNS & 16.9632 & 5.0382 & 4.6877 & 4.6573\\
		HHJ   & 10.2457 & 4.7188 & 4.6594 & 4.6565\\
	\end{tabular}
	\caption{$p4$ with unstructured triangular mesh $h=0.5$ consisting of 25722 triangles.}
	\label{tab:res_raasch_hooke}
\end{table}

\section{Conclusion}
In this work we presented a novel derivation of the Hellan--Herrmann--Johnson method by means of a three-field Hu--Washizu formulation, combining concepts of discrete differential geometry and (distributional) finite elements enabling nonlinear material laws. Using additional shearing degrees of freedom in a hierarchical construction leads to a shear-locking free nonlinear Naghdi shell element, which extends the linear TDNNS method for Reissner--Mindlin plates. By careful linearization of the boundary jump terms the Hellan--Herrmann--Johnson and TDNNS method for Kirchhoff--Love and Reissner--Mindlin shells, respectively, is obtained. All the proposed methods remain valid in case of a kinked or branched shell structure. Further, low-order linear displacement fields can be considered as well as arbitrary high-order elements. The drawback of mixed finite element methods leading to indefinite stiffness matrices can be overcome by hybridization techniques. Using the recently proposed interpolation of the membrane strains into the Regge finite element space avoids membrane locking. We presented several benchmark examples to demonstrate the excellent performance of the proposed shell elements.

\section*{Acknowledgments}
The authors acknowledge support by the Austrian Science Fund (FWF) project \href{https://doi.org/10.55776/F65}{10.55776/F65}. The authors thank Michael Leum\"uller for helping to spot the wrong factor in the inverted material law \eqref{eq:inverted_material_law}.

\bibliographystyle{acm}
\bibliography{cites}

\begin{thebibliography}{10}

\bibitem{Bath14}
{\sc Bathe, K.-J.}
\newblock {\em Finite element procedures}, 2~ed.
\newblock Pearson Education, Inc, Watertown, MA, 2014.

\bibitem{BZH01}
{\sc Batoz, J.~L., Zheng, C.~L., and Hammadi, F.}
\newblock Formulation and evaluation of new triangular, quadrilateral, pentagonal and hexagonal discrete {K}irchhoff plate/shell elements.
\newblock {\em International Journal for Numerical Methods in Engineering 52}, 5‐6 (2001), 615--630.

\bibitem{BRI17}
{\sc Bischoff, M., Ramm, E., and Irslinger, J.}
\newblock {\em Models and Finite Elements for Thin-Walled Structures}.
\newblock American Cancer Society, 2017, pp.~1--86.

\bibitem{BBF13}
{\sc Boffi, D., Brezzi, F., and Fortin, M.}
\newblock {\em Mixed finite element methods and applications}, 1~ed., vol.~44.
\newblock Springer-Verlag Berlin Heidelberg, Berlin, Heidelberg, 2013.

\bibitem{BGO16}
{\sc Bonet, J., Gil, A.~J., and Ortigosa, R.}
\newblock On a tensor cross product based formulation of large strain solid mechanics.
\newblock {\em International Journal of Solids and Structures 84\/} (2016), 49--63.

\bibitem{BDM85}
{\sc Brezzi, F., Douglas, J., and Marini, L.~D.}
\newblock Two families of mixed finite elements for second order elliptic problems.
\newblock {\em Numerische Mathematik 47}, 2 (1985), 217--235.

\bibitem{CB98}
{\sc Chapelle, D., and Bathe, K.~J.}
\newblock Fundamental considerations for the finite element analysis of shell structures.
\newblock {\em Computers \& Structures 66}, 1 (1998), 19--36.

\bibitem{CB11}
{\sc Chapelle, D., and Bathe, K.-J.}
\newblock {\em The finite element analysis of shells - fundamentals}, 2~ed.
\newblock Springer-Verlag, Berlin Heidelberg, 2011.

\bibitem{choi98}
{\sc Choi, D., Palma, F., Sanchez-Palencia, E., and Vilarino, M.}
\newblock Membrane locking in the finite element computation of very thin elastic shells.
\newblock {\em ESAIM: Mathematical Modelling and Numerical Analysis 32}, 2 (1998), 131--152.

\bibitem{Ciarlet05}
{\sc Ciarlet, P.~G.}
\newblock An introduction to differential geometry with applications to elasticity.
\newblock {\em Journal of Elasticity 78-79}, 1 (2005), 1--215.

\bibitem{Com89}
{\sc Comodi, M.~I.}
\newblock The {H}ellan--{H}errmann--{J}ohnson method: Some new error estimates and postprocessing.
\newblock {\em Mathematics of Computation 52}, 185 (1989), 17--29.

\bibitem{CC09}
{\sc Cosserat, E., and Cosserat, F.}
\newblock Th{\'e}orie des corps d{\'e}formables.
\newblock {\em Nature 81}, 67 (1909).

\bibitem{CMPS2021}
{\sc {Costa e Silva}, C., Maassen, S.~F., Pimenta, P.~M., and Schr{\"o}der, J.}
\newblock On the simultaneous use of simple geometrically exact shear-rigid rod and shell finite elements.
\newblock {\em Computational Mechanics 67}, 3 (2021), 867--881.

\bibitem{DZ11}
{\sc Delfour, M.~C., and Zol\'{e}sio, J.-P.}
\newblock {\em Shapes and geometries: metrics, analysis, differential calculus, and optimization}, 2~ed.
\newblock SIAM, Philadelphia, 2011.

\bibitem{DE13}
{\sc Dziuk, G., and Elliott, C.~M.}
\newblock Finite element methods for surface {PDE}s.
\newblock {\em Acta Numerica 22\/} (2013), 289--396.

\bibitem{EOB13}
{\sc Echter, R., Oesterle, B., and Bischoff, M.}
\newblock A hierarchic family of isogeometric shell finite elements.
\newblock {\em Computer Methods in Applied Mechanics and Engineering 254\/} (2013), 170--180.

\bibitem{EGHL02}
{\sc Engel, G., Garikipati, K., Hughes, T. J.~R., Larson, M.~G., Mazzei, L., and Taylor, R.}
\newblock Continuous/discontinuous finite element approximations of fourth-order elliptic problems in structural and continuum mechanics with applications to thin beams and plates, and strain gradient elasticity.
\newblock {\em Computer Methods in Applied Mechanics and Engineering 191}, 34 (2002), 3669 -- 3750.

\bibitem{Gfr2021}
{\sc Gfrerer, M.~H.}
\newblock A {$C^1$}-continuous trace-finite-cell-method for linear thin shell analysis on implicitly defined surfaces.
\newblock {\em Computational Mechanics 67\/} (2021), 679--697.

\bibitem{GGRZ06}
{\sc Grinspun, E., Gingold, Y., Reisman, J., and Zorin, D.}
\newblock Computing discrete shape operators on general meshes.
\newblock {\em Computer Graphics Forum 25}, 3 (2006), 547--556.

\bibitem{HLM2017}
{\sc Hansbo, P., and Larson, M.~G.}
\newblock Continuous/discontinuous finite element modelling of {K}irchhoff plate structures in {R3} using tangential differential calculus.
\newblock {\em Computational Mechanics 60}, 4 (2017), 693--702.

\bibitem{HS2020}
{\sc Hardering, H., and Sander, O.}
\newblock Geometric {{Finite Elements}}.
\newblock In {\em Handbook of {{Variational Methods}} for {{Nonlinear Geometric Data}}}, P.~Grohs, M.~Holler, and A.~Weinmann, Eds. {Springer International Publishing}, {Cham}, 2020, pp.~3--49.

\bibitem{Hel67}
{\sc Hellan, K.}
\newblock Analysis of elastic plates in flexure by a simplified finite element method.
\newblock {\em Acta Polytechnica Scandinavica, Civil Engineering Series 46\/} (1967).

\bibitem{Her67}
{\sc Herrmann, L.~R.}
\newblock Finite element bending analysis for plates.
\newblock {\em Journal of the Engineering Mechanics Division 93}, 5 (1967), 13--26.

\bibitem{Hugh00}
{\sc Hughes, T. J.~R.}
\newblock {\em The finite element method: linear static and dynamic finite element analysis}.
\newblock Dover Publications Inc., Mineola, New York., 2000.

\bibitem{HCB05}
{\sc Hughes, T. J.~R., Cottrell, J.~A., and Bazilevs, Y.}
\newblock Isogeometric analysis: {CAD}, finite elements, {NURBS}, exact geometry and mesh refinement.
\newblock {\em Computer Methods in Applied Mechanics and Engineering 194}, 39 (2005), 4135 -- 4195.

\bibitem{JEON2015}
{\sc Jeon, H.-M., Lee, Y., Lee, P.-S., and Bathe, K.-J.}
\newblock The {{MITC3}}+ shell element in geometric nonlinear analysis.
\newblock {\em Computers \& Structures 146\/} (2015), 91--104.

\bibitem{Joh73}
{\sc Johnson, C.}
\newblock On the convergence of a mixed finite element method for plate bending moments.
\newblock {\em Numerische Mathematik 21}, 1 (1973), 43--62.

\bibitem{KCL1998}
{\sc Kemp, B.~L., Cho, C., and Lee, S.~W.}
\newblock A four-node solid shell element formulation with assumed strain.
\newblock {\em International Journal for Numerical Methods in Engineering 43}, 5 (1998), 909--924.

\bibitem{KBLW09}
{\sc Kiendl, J., Bletzinger, K.-U., Linhard, J., and W{\"u}chner, R.}
\newblock Isogeometric shell analysis with {K}irchhoff--{L}ove elements.
\newblock {\em Computer Methods in Applied Mechanics and Engineering 198}, 49 (2009), 3902 -- 3914.

\bibitem{Kni1997}
{\sc Knight, N.~F.}
\newblock Raasch {{Challenge}} for {{Shell Elements}}.
\newblock {\em AIAA Journal 35}, 2 (1997), 375--381.

\bibitem{Li18}
{\sc Li, L.}
\newblock {\em Regge Finite Elements with Applications in Solid Mechanics and Relativity}.
\newblock PhD thesis, University of Minnesota, 2018.

\bibitem{Mor71}
{\sc Morley, L. S.~D.}
\newblock The constant-moment plate-bending element.
\newblock {\em Journal of Strain Analysis 6}, 1 (1971), 20--24.

\bibitem{NSBN2023}
{\sc Nebel, L.~J., Sander, O., B{\^i}rsan, M., and Neff, P.}
\newblock A geometrically nonlinear {{Cosserat}} shell model for orientable and non-orientable surfaces: {{Discretization}} with geometric finite elements, 2023.

\bibitem{Ned1980}
{\sc N{\'e}d{\'e}lec, J.~C.}
\newblock Mixed finite elements in {R}3.
\newblock {\em Numerische Mathematik 35}, 3 (1980), 315--341.

\bibitem{Neun21}
{\sc Neunteufel, M.}
\newblock {\em Mixed Finite Element Methods for Nonlinear Continuum Mechanics and Shells}.
\newblock PhD thesis, TU Wien, 2021.

\bibitem{NPS21}
{\sc Neunteufel, M., Pechstein, A.~S., and Sch\"oberl, J.}
\newblock Three-field mixed finite element methods for nonlinear elasticity.
\newblock {\em Computer Methods in Applied Mechanics and Engineering 382\/} (2021), 113857.

\bibitem{NS19}
{\sc Neunteufel, M., and Sch{\"o}berl, J.}
\newblock The {H}ellan--{H}errmann--{J}ohnson method for nonlinear shells.
\newblock {\em Computers \& Structures 225\/} (2019), 106109.

\bibitem{NS21}
{\sc Neunteufel, M., and Sch\"oberl, J.}
\newblock Avoiding membrane locking with {R}egge interpolation.
\newblock {\em Computer Methods in Applied Mechanics and Engineering 373\/} (2021), 113524.

\bibitem{NSS23}
{\sc Neunteufel, M., Sch{\"o}berl, J., and Sturm, K.}
\newblock Numerical shape optimization of the {{Canham-Helfrich-Evans}} bending energy.
\newblock {\em Journal of Computational Physics 488\/} (2023), 112218.

\bibitem{OZ00}
{\sc O\~{n}ate, E., and Z\'{a}rate, F.}
\newblock Rotation-free triangular plate and shell elements.
\newblock {\em International Journal for Numerical Methods in Engineering 47}, 1‐3 (2000), 557--603.

\bibitem{OSRB2017}
{\sc Oesterle, B., Sachse, R., Ramm, E., and Bischoff, M.}
\newblock Hierarchic isogeometric large rotation shell elements including linearized transverse shear parametrization.
\newblock {\em Computer Methods in Applied Mechanics and Engineering 321\/} (2017), 383--405.

\bibitem{PS11}
{\sc Pechstein, A., and Sch{\"o}berl, J.}
\newblock Tangential-displacement and normal-normal-stress continuous mixed finite elements for elasticity.
\newblock {\em Math. Models Methods Appl. Sci. 21}, 8 (2011), 1761--1782.

\bibitem{PS12}
{\sc Pechstein, A., and Sch{\"o}berl, J.}
\newblock Anisotropic mixed finite elements for elasticity.
\newblock {\em International Journal for Numerical Methods in Engineering 90}, 2 (2012), 196--217.

\bibitem{PS17}
{\sc Pechstein, A., and Sch{\"o}berl, J.}
\newblock The {TDNNS} method for {R}eissner--{M}indlin plates.
\newblock {\em Numerische Mathematik 137}, 3 (2017), 713--740.

\bibitem{RZ19}
{\sc Rafetseder, K., and Zulehner, W.}
\newblock A new mixed approach to {K}irchhoff--{L}ove shells.
\newblock {\em Computer Methods in Applied Mechanics and Engineering 346\/} (2019), 440--455.

\bibitem{RT77}
{\sc Raviart, P.-A., and Thomas, J.-M.}
\newblock A mixed finite element method for 2-nd order elliptic problems.
\newblock In {\em Mathematical Aspects of Finite Element Methods}, vol.~66. Springer, 1977, pp.~292--315.

\bibitem{Reus20}
{\sc Reusken, A.}
\newblock Stream function formulation of surface {S}tokes equations.
\newblock {\em IMA Journal of Numerical Analysis 40}, 1 (2018), 109--139.

\bibitem{SPI2023}
{\sc Sanchez, M.~L., Pimenta, P.~M., and Ibrahimbegovic, A.}
\newblock A simple geometrically exact finite element for thin shells{\textemdash}{{Part}} 1: Statics.
\newblock {\em Computational Mechanics 72}, 6 (2023), 1119--1139.

\bibitem{Sch97}
{\sc Sch{\"o}berl, J.}
\newblock {NETGEN} an advancing front 2{D}/3{D}-mesh generator based on abstract rules.
\newblock {\em Computing and Visualization in Science 1}, 1 (1997), 41--52.

\bibitem{Sch14}
{\sc Sch{\"o}berl, J.}
\newblock C++ 11 implementation of finite elements in {NGS}olve.
\newblock {\em Institute for Analysis and Scientific Computing, Vienna University of Technology\/} (2014).

\bibitem{SF19a}
{\sc Sch\"{o}llhammer, D., and Fries, T.-P.}
\newblock {K}irchhoff--{L}ove shell theory based on tangential differential calculus.
\newblock {\em Computational Mechanics 64}, 1 (2019), 113--131.

\bibitem{SF19b}
{\sc Sch\"ollhammer, D., and Fries, T.-P.}
\newblock {R}eissner--{M}indlin shell theory based on tangential differential calculus.
\newblock {\em Computer Methods in Applied Mechanics and Engineering 352\/} (2019), 172--188.

\bibitem{SIMO89}
{\sc Simo, J.~C., and Fox, D.~D.}
\newblock On a stress resultant geometrically exact shell model. {P}art {I}: Formulation and optimal parametrization.
\newblock {\em Computer Methods in Applied Mechanics and Engineering 72}, 3 (1989), 267--304.

\bibitem{Sp79}
{\sc Spivak, M.}
\newblock {\em A comprehensive introduction to differential geometry}, 3th~ed., vol.~1.
\newblock Publish or Perish, Inc., Houston, Texas, 1999.

\bibitem{Stein40}
{\sc Steiner, J.}
\newblock {\"U}ber parallele {F}l{\"a}chen.
\newblock {\em Monatsber. Preuss. Akad. Wiss 2\/} (1840), 114--118.

\bibitem{SH2017}
{\sc Sze, K.~Y., and Hu, Y.~C.}
\newblock Assumed natural strain and stabilized quadrilateral {{Lobatto}} spectral elements for {{C0}} plate/shell analysis.
\newblock {\em International Journal for Numerical Methods in Engineering 111}, 5 (2017), 403--446.

\bibitem{SLL2004}
{\sc Sze, K.~Y., Liu, X.~H., and Lo, S.~H.}
\newblock Popular benchmark problems for geometric nonlinear analysis of shells.
\newblock {\em Finite Elements in Analysis and Design 40}, 11 (2004), 1551--1569.

\bibitem{KB96}
{\sc van Keulen, F., and Booij, J.}
\newblock Refined consistent formulation of a curved triangular finite rotation shell element.
\newblock {\em International Journal for Numerical Methods in Engineering 39}, 16 (1996), 2803--2820.

\bibitem{VPS2017}
{\sc Viebahn, N., Pimenta, P.~M., and Schr{\"o}der, J.}
\newblock A simple triangular finite element for nonlinear thin shells: statics, dynamics and anisotropy.
\newblock {\em Computational Mechanics 59}, 2 (2017), 281--297.

\bibitem{whitney57}
{\sc Whitney, H.}
\newblock {\em Geometric integration theory}.
\newblock Princeton University Press, Princeton, N. J, 1957.

\bibitem{Zaglmayr06}
{\sc Zaglmayr, S.}
\newblock {\em High Order Finite Element Methods for Electromagnetic Field Computation}.
\newblock PhD thesis, Johannes Kepler Universit{\"a}t Linz, 2006.

\bibitem{ZT20}
{\sc Zienkiewicz, O., and Taylor, R.}
\newblock {\em The Finite Element Method. {V}ol. 1: {T}he Basis}, 5~ed.
\newblock Butterworth-Heinemann, Oxford, 2000.

\end{thebibliography}

\appendix
\section{Tensor cross product}
We summarize properties of the tensor cross product used in this work:
\begin{lemma}
	\label{lem:tcp_id}
	There holds for $\bA\in\R^{3\times 3}$, and $\Proj=\bI-\rnv\otimes\rnv$
	\begin{subequations}
		\begin{align}
			&\bA\tcross\bI=\tr{\bA}\bI - \bA^\top,\label{eq:tcp_id3}\\
			&\Proj\tcross\bA\rnv=\bI\tcross\bA \rnv,\label{eq:tcp_id5}\\
			&\frac{1}{2}\Proj\tcross\Proj\rnv=\rnv.\label{eq:tcp_id6}
		\end{align}
	\end{subequations}
\end{lemma}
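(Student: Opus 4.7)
The plan is to prove all three identities by index-level manipulation with the Levi-Civita symbol, using bilinearity of $\tcross$ together with the contraction identity $\varepsilon_{ikl}\varepsilon_{jml}=\delta_{ij}\delta_{km}-\delta_{im}\delta_{kj}$ and the observation that a symmetric pair of indices contracted against $\varepsilon$ annihilates.

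For \eqref{eq:tcp_id3}, I would simply expand $(\bA\tcross\bI)_{ij}=\varepsilon_{ikl}\varepsilon_{jmn}\bA_{km}\delta_{ln}=\varepsilon_{ikl}\varepsilon_{jml}\bA_{km}$ and apply the $\varepsilon$-$\delta$ identity to read off $\tr{\bA}\delta_{ij}-\bA_{ji}$. This one-line computation serves as the main building block for the other two identities. For \eqref{eq:tcp_id5}, I would split $\Proj=\bI-\rnv\otimes\rnv$ by bilinearity and reduce the claim to $((\rnv\otimes\rnv)\tcross\bA)\rnv=0$; written out in indices, this expression exposes the factor $\varepsilon_{jmn}\rnv_j\rnv_m$, which vanishes because $\rnv_j\rnv_m$ is symmetric in $j,m$ while $\varepsilon_{jmn}$ is antisymmetric.

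For \eqref{eq:tcp_id6}, I would bilinearly expand $\frac{1}{2}(\bI-\rnv\otimes\rnv)\tcross(\bI-\rnv\otimes\rnv)$ into the sum $\frac{1}{2}\bI\tcross\bI-\bI\tcross(\rnv\otimes\rnv)+\frac{1}{2}(\rnv\otimes\rnv)\tcross(\rnv\otimes\rnv)$. Identity \eqref{eq:tcp_id3} evaluates the first two summands to $\bI$ and $\bI-\rnv\otimes\rnv$ respectively (using $\|\rnv\|=1$ so that $\tr{\rnv\otimes\rnv}=1$), while the third vanishes by the same symmetry-antisymmetry argument applied now to both index pairs. The net identity $\frac{1}{2}\Proj\tcross\Proj=\rnv\otimes\rnv$, which is also the spectral expression of $\cof{\Proj}$ given that $\Proj$ has eigenvalues $1,1,0$ with kernel spanned by $\rnv$, then yields the claim after acting on $\rnv$.

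I do not anticipate any real obstacle: the whole lemma is an exercise in bookkeeping with the definition $(\bA\tcross\bB)_{ij}=\varepsilon_{ikl}\varepsilon_{jmn}\bA_{km}\bB_{ln}$. The only point requiring care is keeping the index ordering consistent across all three derivations, so that the transpose in \eqref{eq:tcp_id3} and the subsequent right-action by $\rnv$ in \eqref{eq:tcp_id5} and \eqref{eq:tcp_id6} land on the correct slot.
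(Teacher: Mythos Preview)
Your proposal is correct and is precisely the ``direct computation'' the paper invokes (it merely cites \cite[p.~51]{BGO16} without writing anything out). Your index bookkeeping is clean, and the only minor remark is that for \eqref{eq:tcp_id6} you implicitly use the symmetry $\bI\tcross(\rnv\otimes\rnv)=(\rnv\otimes\rnv)\tcross\bI$ to collapse the two cross terms into one, which is stated in the paper but worth making explicit.
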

\begin{proof}
	Direct computation, see \cite[p. 51]{BGO16}.
\end{proof}
\section{First and second variations}
\label{sec:first_second_variations}
To compute the first and second variations of the nonlinear Koiter shell model \eqref{eq:HHJ_nonlinear} with the stable angle computation \eqref{eq:stable_angle}, the different terms are explicitly derived. We start with the variations of the normalized tangent vector
\begin{align*}
	&D_u(\dtv)[\delta u]=D_u\Big(\frac{\DGS\rtv}{\|\DGS\rtv\|}\Big)[\delta u] = \frac{\rgrad\delta u\rtv}{\|\DGS\rtv\|}-\frac{\DGS\rtv}{\|\DGS\rtv\|^3} \DGS\rtv\cdot\rgrad\delta u\rtv = \bm{P}^\perp_{\tau_E}\frac{\rgrad\delta u\rtv}{\|\DGS\rtv\|},\\
	&D^2_u(\dtv)[\delta u,\Delta u]=\frac{1}{\|\DGS\rtv\|^2}\big(2\sym{\bm{P}^\perp_{\tau_E}\rgrad\Delta u\rtv\otimes \dtv}\rgrad\delta u\rtv- (\dtv\cdot\rgrad\Delta u\rtv)\bm{P}^\perp_{\tau_E}\rgrad\delta u\rtv\big).
\end{align*}
For the outer normal vector we have analogously, with $\bm{P}^\perp_{\nu} = \bI - \dnv\otimes\dnv$ and noting that with the tensor cross product there holds $D_u(\cof{\DGS}\rnv)[\delta u] = (\DGS\tcross\rgrad\delta u)\rnv$, $D^2_u(\cof{\DGS}\rnv)[\delta u,\Delta u] = (\rgrad\Delta u\tcross\rgrad\delta u)\rnv$,
\begin{align*}
	D_u(\dnv)[\delta u]= \bm{P}^\perp_{\nu}\frac{\DGS\tcross \rgrad\delta u\rnv}{\|\cof{\DGS}\rnv\|}
\end{align*}
and the second variation follows the same lines as for the tangent vector. The variations of the element normal vector follows now immediately as $\dcnv=\dnv\times\dtv$
\begin{align*}
	D_u(\dcnv)[\delta u]&=D_u(\dnv\times \dtv)[\delta u]=\bm{P}_{\nu}^\perp\frac{\DGS\tcross \rgrad\delta u\rnv}{\|\cof{\DGS}\rnv\|}\times\dtv + \dnv\times \bm{P}^\perp_{\tau_E}\frac{\rgrad\delta u\rtv}{\|\DGS\rtv\|}.
\end{align*}
The second variation is a simple but lengthy expression and thus omitted.

\subsection{First variations:}
For a more compact notation we neglect the $\phi$ dependency and write e.g., $\tau$ instead of $\dtv$. The first variations are
\begin{align*}
	&D_u\Big(\frac{t}{2}\|\GTS\|^2_{\MT}\Big)[\delta u] = 2t \MT \GTS:(\DGS^\top\rgrad\delta u),\\
	&D_{\bending}\Big(\frac{6}{t^3}\|\bending\|^2_{\MT^{-1}}\Big)[\delta\bending] = \frac{12}{t^3} \MT^{-1}\bending:\delta\bending,\\
	&D_u((\Hessian_\nu+(1-\rnv\cdot\nu)\rgrad\rnv):\bending)[\delta u]= (\rgrad^2 \delta u_i \nu_i+\Hessian_{D_u\nu[\delta u]}-\rnv\cdot D_u\nu[\delta u]\rgrad\rnv):\bending,\\
	&D_{\bending}((\Hessian_\nu+(1-\rnv\cdot\nu)\rgrad\rnv):\bending)(\delta \bending)= (\Hessian_\nu+(1-\rnv\cdot\nu)\rgrad\rnv):\delta\bending,\\
	&D_u(\sphericalangle(P_{\tau_E}^\perp(\Av{\nu}^n),\mu))[\delta u] =\\
	&\quad-\frac{1}{\sqrt{1-(P_{\tau_E}^\perp(\Av{\nu}^n)\cdot\mu)^2}}\Big(\frac{\Av{\nu}^n\cdot D_u\mu[\delta u]}{\|\Av{\nu}^n-\Av{\nu}^n\cdot\tau\tau\|} +\frac{(\Av{\nu}^n\cdot\mu)(\Av{\nu}^n\cdot D_u\tau[\delta u])(\Av{\nu}^n\cdot\tau)}{\|\Av{\nu}^n-\Av{\nu}^n\cdot\tau\tau\|^3}\Big).
\end{align*}

\subsection{Second variations:}
The second variations read
\begin{align*}
	&D^2_u\Big(\frac{t}{2}\|\GTS\|^2_{\MT}\Big)[\delta u,\Delta u] = 4t \MT\,\sym{\DGS\rgrad\Delta u}:(\DGS\rgrad\delta u) + 2t\MT\GTS:(\rgrad\Delta u^\top\rgrad\delta u),\\
	&D^2_{\bending}\Big(\frac{6}{t^3}\|\bending\|^2_{\MT^{-1}}\Big)[\delta\bending,\Delta \bending] = \frac{12}{t^3} \MT^{-1}\Delta\bending:\delta\bending,\\
	&D^2_u((\Hessian_\nu+(1-\rnv\cdot\nu)\rgrad\rnv):\bending)[\delta u,\Delta u]=(\rgrad^2 \delta u_i D_u\nu_i[\Delta u]+\rgrad^2\Delta u_i D_u\nu_i[\delta u]+\Hessian_{D^2_u\nu[\delta u,\Delta u]}\\
	&\qquad\qquad\qquad\qquad\qquad\qquad\qquad\qquad\qquad-\rnv\cdot D^2_u\nu[\delta u,\Delta u]\rgrad\rnv):\bending\\
	&D^2_{\bending\,u}((\Hessian_\nu+(1-\rnv\cdot\nu)\rgrad\rnv):\bending)[\delta u,\Delta \bending]= (\rgrad^2 \delta u_i \nu_i+\Hessian_{D_u\nu[\delta u]}-\rnv\cdot D_u\nu[\delta u]\rgrad\rnv):\Delta\bending.
\end{align*}
The second variation of the angle, $D^2_u(\sphericalangle(P_{\tau_E}^\perp(\Av{\nu}^n),\mu))[\delta u,\Delta u]$, follows by a straight-forward but lengthy application of product rules and are therefore omitted.
\subsection{Projection update in every Newton iteration:} 
If we average the normal vector after every Newton iteration there holds $\Av{\nu}^n\cdot\tau=0$ and thus
\begin{align*}
	&D_u(\sphericalangle(P_{\tau_E}^\perp(\Av{\nu}^n),\mu))[\delta u] =-\frac{1}{\sqrt{1-(\Av{\nu}^n\cdot\mu)^2}}\Av{\nu}^n\cdot D_u\mu[\delta u],\\
	&D^2_u(\sphericalangle(P_{\tau_E}^\perp(\Av{\nu}^n),\mu))[\delta u,\Delta u]=-\frac{(\Av{\nu}^n\cdot\mu)(\Av{\nu}^n\cdot D_u\mu[\Delta u] )(\Av{\nu}^n\cdot D_u\mu[\delta u])}{\sqrt{1-(\Av{\nu}^n\cdot\mu)^2}^3}\\
	&-\frac{1}{\sqrt{1-(\Av{\nu}^n\cdot\mu)^2}}\Big(\Av{\nu}^n\cdot D_u^2\mu[\delta u,\Delta u]+(\Av{\nu}^n\cdot\mu)(\Av{\nu}^n\cdot D_u\tau[\delta u])(\Av{\nu}^n\cdot D_u\tau[\Delta u])\Big).
\end{align*}

\end{document}